\newtheorem{theorem}{Theorem}[section]
\newtheorem{lemma}[theorem]{Lemma}
\newtheorem{definition}[theorem]{Definition}
\newtheorem{assumption}[theorem]{Assumption}
\newcommand{\Qpol}{Q}
\newcommand{\sph}{S^2}
\DeclareMathOperator{\Px}{P_x}
\DeclareMathOperator{\I}{I}
\DeclareMathOperator{\D}{D}
\newcommand{\spm}{\omega}
\newcommand{\spml}{\lambda}
\newcommand{\cofi}{\chi_1}
\newcommand{\cofii}{\chi_2}
\newcommand{\inde}{e}
\newcommand{\exr}{r_\inde}
\newcommand{\exx}{x_\inde}
\newcommand{\exF}{F_\inde}
\newcommand{\exgamma}{\gamma_\inde}
\newcommand{\extgamma}{\tilde\gamma_\inde}
\newcommand{\extr}{\tilde r_\inde}
\newcommand{\extalpha}{\tilde\alpha_\inde}
\newcommand{\exalpha}{\alpha_\inde}
\newcommand{\extd}{\tilde d_\inde}
\newcommand{\exd}{d_\inde}
\newcommand{\exhatd}{\hat d_\inde}
\newcommand{\exOmega}{\Omega_\inde}
\newcommand{\exX}{X_\inde}
\newcommand{\exXh}{X_\inde^{h(n)}}
\newcommand{\exa}{a_\inde}
\newcommand{\exA}{A_\inde}
\newcommand{\exAh}{A_{\inde,h(n)}}
\newcommand{\exeta}{\eta_{\inde}}
\newcommand{\exetaeps}{\eta_{\inde,\epsilon}}
\newcommand{\exT}{T_\inde}
\newcommand{\exTeps}{T_{\inde,\epsilon}}
\newcommand{\exTepsz}{T_{\inde,{\epsilon_0}}}
\newcommand{\exTepszn}{T_{\inde,{\epsilon_0}}^{h(n)}}
\newcommand{\exPin}{\Pi^\inde_{h(n)}}
\newcommand{\dd}{\mathrm{d}}
\newcommand{\ol}[1]{\overline{#1}}
\newcommand{\spl}{\langle}
\newcommand{\spr}{\rangle}
\newcommand{\bpm}{\begin{pmatrix}}
\newcommand{\epm}{\end{pmatrix}}
\renewcommand{\div}{\operatorname{div}}
\renewcommand{\dim}{\operatorname{dim}}
\newcommand{\setC}{\mathbb{C}}
\newcommand{\setN}{\mathbb{N}}
\newcommand{\setR}{\mathbb{R}}
\title[Analysis of radial complex scaling methods]{Analysis of radial complex scaling methods: scalar resonance problems}
\author{Martin Halla}
\email{halla@mps.mpg.de}
\subjclass[2010]{{65N30, 65N12, 35B34, 35J20.}}
\keywords{complex scaling, perfectly matched layer, pml, resonance problem, Helmholtz equation, T-coercivity.}
\date{\today}
\begin{document}
\begin{abstract}
We consider radial complex scaling/perfectly matched layer methods for scalar resonance problems in homogeneous exterior domains.
We introduce a new abstract framework to analyze the convergence of domain truncations and discretizations.
Our theory requires rather minimal assumptions on the scaling profile and includes affin, smooth and also unbounded profiles.
We report a swift technique to analyze the convergence of domain truncations and a more technical one for approximations through simultaneaous truncation and discretization.
We adapt the latter technique to cover also so-called exact methods which do not require a domain truncation.
Our established results include convergence rates of eigenvalues and eigenfunctions.

The introduced framework is based on the ideas to interpret the domain truncation as Galerkin approximation, to apply theory on holomorphic Fredholm operator eigenvalue approximation theory to a linear eigenvalue problem, to employ the notion of weak T-coercivity and T-compatible approximations, to construct a suitable T-operator as multiplicatin operator, to smooth its symbol and to apply the discrete commutator technique.
\end{abstract}
\maketitle

\section{Introduction}\label{sec:introduction}

Since the 1970s a popular method has been used in molecular physics to study resonances \cite{Simon:78,Moiseyev:98}. This method is referred to by various names: complex scaling (CS), analytic dilation (AD) and spectral deformation (SD). It admits a profound mathematical framework with the Aguilar-Balslev-Combes-Simon Theorem at its core~\cite{HislopSigal:96}.
A main advantage of the method is that it preserves the \emph{linear} eigenvalue problem structure. In contrast other types of transparent boundary conditions such as absorbing boundary conditions \cite{Givoli:92,Hagstrom:99} do not work for resonance problems or destroy the linear nature like boundary element methods \cite{Unger:09,SteinbachUnger:09,Unger:17}.
In the 1990s B\'erenger \cite{Berenger:94} introduced his perfectly matched layer (PML) method as reflectionless sponge layer for electromagnetic scattering problems which became very popular for all kinds of wave propagation problems. In \cite{ChewWeedon:94,TeixeiraChew:97,CollinoMonk:98a} the PML method was recognized to be a complex scaling technique.
Also the original variant of the Hardy space infinite element method \cite{Nannen:08,HohageNannen:09,Halla:16} was recognized in \cite{NannenWess:19} to be a conjunction of a complex scaling and an infinite element method.
We refer to \cite{HeinHohageKoch:04,HeinHohageKochSchoeberl:07} for computational studies
of CS/PML methods for resonance problems.
Recently CS/PML methods have been applied to other kinds of problems as well. E.g.\ problems which are posed in bounded domains, but admit black hole phenomena \cite{BonnetBDCarvalhoChesnelCiarlet:16}. A further application of
CS/PML can be found in domain decomposition methods \cite{EngquistYing:11,LiuYing:16}.
We refer to the introduction of \cite{Halla:19Diss} for a rigorous overview on the existing literature on CS/PML methods.

The idea of CS/PML methods is to apply a continuous complex coordinate transformation (the complex scaling) to the resonance functions. For resonances in a suitable region of the complex plane the corresponding resonance functions become exponentially damped by
this transformation. Consequently a new set of partial differential equations is derived for the transformed resonance functions and due to their exponential decay the resonance problem transforms to an eigenvalue problem in a suitable standard Sobolev space.
Furthermore the domain is truncated to a bounded one and a homogeneous boundary condition imposed at the artificial boundary. Due to the rapid decay of the eigenfunctions the committed error is expected to be small. The derived problem can consequently be discretized with standard numerical schemes such as finite element methods.

We note that despite their popularity the construction of physically correct and stable CS/PML methods has to be executed with care.
In general the complex scaling has to be designed so that evanescent waves stay evanescent and propagative waves with positive group velocity become evanescent.
While for some equations this poses no problem at all it can lead to serious difficulties if the equation is anisotropic \cite{BecacheFauqueuxJoly:03}, advective \cite{BecacheBonnetBDLegendre:04,BecacheBonnetBDLegendre:06} or dispersiv \cite{CassierJolyKachanovska:17,BecacheKachanovska:17}. Also waveguide geometries \cite{SkeltonAdamsCraster:07,BonnetBDChambeyronLegendre:14,HallaNannen:15,HallaHohageNannenSchoeberl:16} can generate such difficulties since they cause dispersive effects (although the equation may be dispersionless itself).

While the application of the CS/PML method is widely spread, the results on proofs of convergence have been limited so far. For time-dependent equations the only actual convergence result known to us is \cite{Diaz:05,DiazJoly:06}. Time-harmonic scattering problems have been dealt with by a number of authors (e.g.\ \cite{LassasSomersalo:98,HohageSchmidtZschiedrich:03b,BaoWu:05,BramblePasciak:07,BramblePasciak:08a,BramblePasciakTrenev:10,ChenXiangZhang:16}). However, the results are usually formulated for special scaling profiles and a unified framework was missing. For resonance problems the only known convergence results known to use are \cite{Kim:09,KimPasciak:09,Kim:14,HohageNannen:15}. Moreover with the exception of \cite{HohageNannen:15} all works analyze the domain truncation and the subsequent (finite element) discretization seperately. Consequently the important question if arbitrary combinations of domain truncations and discretizations can lead to erroneous results is left open.

This article considers radial CS/PML approximations for scalar resonance problems in homogeneous exterior domains.
We introduce a new abstract framework for the convergence analysis which can also serve for the analysis of other equations and geometries.
On the one hand we present a swift technique to analyze the domain truncation and the subsequent discretization seperately. On the other hand we also cover simultaneaous approximations (with more technical effort). Further, we adapt our analysis to treat so-called exact methods \cite{HugoninLalanne:05,BermudezHervellaNPrietoRodriguez:08} which do not require a domain truncation. Different to existing works \cite{Kim:09,KimPasciak:09,Kim:14,HohageNannen:15} we also report convergence rates of eigenvalues and eigenfunctions. Our results are formulated under rather minimal assumptions on the profile function and cover affine, smooth as well as unbounded profiles.

Our framework is build on the combination of several indepent ideas.
Following \cite{HohageNannen:15} we interpret the domain truncation as Galerkin approximation.
This restores a most convenient setup to perform the approximation analysis.
In addition to \cite{HohageNannen:15} we propose to use this idea also for the analysis of the sole domain truncation (without discretization).
Due to their large essential spectrum resonance problems can't be reformulated as standard linear eigenvalue problems for a compact operator and standard eigenvalue approximation theory \cite{BabuskaOsborn:91,Boffi:10} cannot be applied. Following \cite{Halla:16} we apply literature on holomorphic Fredholm operator eigenvalue approximation theory \cite{Karma:96a,Karma:96b} to a linear eigenvalue problem. This allows us to employ readily available concepts and relieves us from conducting huge parts of the analysis manually (as done in \cite{Kim:09,KimPasciak:09,Kim:14,HohageNannen:15}). This way we also easily obtain  convergence rates.
We ensure the regularity/stability of Galerkin approximations through the notion of weak T-coercivity and T-compatible approximations \cite{Halla:19Tcomp}. 
For our eigenvalue problem at hand the construction of a suitable T-operator can be realized with a simple multiplication operator. For restricted kinds of scaling profiles this observation goes back to \cite{BramblePasciak:07}.
To treat simultaneaous approximations and exact methods we smooth the symbol of the multiplication operator and employ the discrete commutator technique of \cite{Bertoluzza:99}.

The remainder of this article is structured as follows.
In Section~\ref{sec:theproblem} we introduce the original resonance problem, the CS/PML eigenvalue problem and discuss their relation.
We further introduce the domain truncation as Galerkin approximation and discuss why convenient theory on linear eigenvalue problem approximations cannot be applied.
In Section~\ref{sec:framework} we recall the T-analysis framework \cite{Halla:19Tcomp}, explicitly construct a T-operator and establish a weak T-coercivity result in Theorem~\ref{thm:AwTc}.
In Section~\ref{sec:appr_subseq} we discuss approximation through domain truncations.
We establish in Theorem~\ref{thm:SpectralConvergenceHelmholtz} convergence and explain how to obtain convenient exponential error estimates by means of Lemma~\ref{lem:BestApproximation}.
In Section~\ref{sec:appr_simul} we report similar results for the more subtle case of simultaneaous domain truncation and discretization.
In Section~\ref{sec:appr_truncationless} we introduce a reformulation of the CS/PML eigenvalue problem on a bounded domain and establish convergence of approximations in Theorem~\ref{thm:SpectralConvergenceHelmholtzExact}.
We discuss how to choose the CS/PML parameters such that common finite element methods fit into the former theory.
In Section~\ref{sec:conclusion} we conclude and discuss the perspective to generalize the presented results to other equations and geometric configurations.

\section{The resonance problem and its approximation}\label{sec:theproblem}

\subsection{The resonance problem}\label{subsec:resonanceproblem}

Let $B_r\subset\setR^3$ be the open ball with radius $r>0$ centered at the origin,
$B_r(x_0)\subset\setR^3$ be the open ball with radius $r>0$ centered at $x_0$
and $A_{r_1,r_2}\subset\setR^3$ be the open annulus $B_{r_2}\setminus \ol{B_{r_1}}$ with radii $r_2>r_1>0$.
For a Lipschitz domain $D\subset\setR^3$ let
\begin{align}
\tilde H^1_\mathrm{loc}(D)&:=\{u\in H^1_\mathrm{loc}(D)\colon u|_{D\cap B_r}\in H^1(D\cap B_r)
\text{ for all }r>0\text{ with }D\cap B_r\neq\emptyset\}.
\end{align}
For a Lipschitz domain $D\subset\setR^3$ with finite boundary $\partial D$ and $u\in H^1_\mathrm{loc}(D)$ the trace
$u|_{\partial D}\in H^{1/2}(\partial D)$ is well defined. Hence let
\begin{align}
H^1_\mathrm{0,loc}(D)&:=\{u\in \tilde H^1_\mathrm{loc}(D)\colon u|_{\partial D}=0\}.
\end{align}
Let $\Omega\subset\setR^3$ be a Lipschitz domain so that the complement $\Omega^c$ is compact and non-empty.
We seek non-trivial solutions $(\spm,u)$ to
\begin{subequations}\label{eq:ResonanceProblemFormal}
\begin{align}
\label{eq:ResonanceProblemFormalA} -\Delta u-\spm^2 u&=0\quad\text{in }\Omega,\\
\label{eq:ResonanceProblemFormalB} u&=0\quad\text{at }\partial\Omega,
\end{align}
together with the abstract radiation condition (which will be specified in
Definition~\ref{def:RadiationCondition})
\begin{align}
u&\text{ is outgoing}
\end{align}
\end{subequations}
in the distributional sense. That is $(\spm,u)$ solves
\begin{subequations}\label{eq:ResonanceProblemDistributional}
\begin{align}
\label{eq:ResonanceProblemDistributionalA}
\text{find }(\spm,u)\in\setC\setminus\{0\}\times \tilde H^1_\mathrm{0,loc}(\Omega)\setminus\{0\}&\text{ such that}\nonumber\\
\langle \nabla u,\nabla u'\rangle_{L^2(\Omega)}-\spm^2\langle u,u'\rangle_{L^2(\Omega)}&=0
\quad\text{for all }u'\in C^\infty_{0}(\Omega),\\
u&\text{ is outgoing}.
\end{align}
\end{subequations}

Let $S^2_r:=\{x\in\setR^3\colon|x|=r\}$ be the sphere with radius $r>0$ and $S^2:=S^2_1$ be the unit sphere.
Consider the standard parametrization
\begin{align}
\Qpol(r,\hat x):=r\hat x, \quad r>0,\hat x\in S^2.
\end{align}
It is well known (see e.g. \cite[Lemmata~2.3 and~2.4]{Halla:19Diss}) that every solution to the Helmholtz equation $-\Delta u-\spm^2 u=0$
in an annulus $A_{r_1,r_2}$ can be expanded in a series of tensor product functions of spherical Hankel functions of the
first $h^1_n(r)$ and second $h^2_n(r)$ kind and spherical harmonics $Y_n^m(\hat x)$. The meaningful physical radiation condition demands that
no spherical Hankel functions of the second kind occur.

\begin{definition}[Radiation condition]\label{def:RadiationCondition}
Let $(\spm,u)\in\setC\setminus\{0\}\times \tilde H^1_\mathrm{0,loc}(\Omega)\setminus\{0\}$ be a solution
to~\eqref{eq:ResonanceProblemDistributionalA}. We call $u$ to be outgoing if it admits a representation
\begin{align}\label{eq:RadiationCondition}
u\circ\Qpol(r,\hat x) = \sum_{n=0}^\infty\sum_{m=-n}^n a_n^mh^1_n(\spm r)Y_n^m(\hat x)
\end{align}
in $L^2(A_{r_1,r_2})$ for all $0<r_1<r_2$ with $\Omega^c\subset B_{r_1}$.
\end{definition}

\subsection{The complex scaled eigenvalue problem}\label{subsec:csevp}

We will define a complex change of the radial coordinate
$\tilde r(r)=(1+i\tilde\alpha(r))r$ in terms of a profile function
$\tilde\alpha$. We make assumptions on this profile function as follows.
\begin{assumption}\label{ass:TildeAlpha}
Let $r^*_1>0$ be such that $\Omega^c$ is contained in the ball $B_{r^*_1}$ and
$\tilde\alpha\colon\setR^+_0\to\setR^+_0$ be such that
\begin{enumerate}
 \item\label{it:talpha0} $\tilde\alpha(r)=0$ for $r\leq r^*_1$,
 \item\label{it:talphacont} $\tilde\alpha$ is continuous,
 \item\label{it:nontrivial} $\tilde\alpha(r)>0$ for $r>r^*_1$,
 \item\label{it:talphamonoton} $\tilde\alpha$ is non-decreasing,
 \item\label{it:talphaC2} $\tilde\alpha$ is twice continuously differentiable in $(r^*_1,+\infty)$ with continuous
 extensions of $\tilde\alpha$, $\partial_r\tilde\alpha$, $\partial_r\partial_r\tilde\alpha$ to $[r^*_1,+\infty)$.
\end{enumerate}
\end{assumption}

Assumption~\ref{ass:TildeAlpha} is very general.
Later on we will require an additional Assumption~\ref{ass:TildeAlpha2} for our analysis.
In particular Assumptions~\ref{ass:TildeAlpha} and~\ref{ass:TildeAlpha2} are satisfied by profiles of the following kinds. The probably simplest complex scaling is
\begin{subequations}
\begin{align}
\tilde r(r)=r+i\alpha_0(r-r_1^*), \quad r\geq r_1^*
\end{align}
with a constant $\alpha_0>0$. It corresponds to
\begin{align}\label{eq:AlphaAffin}
\tilde\alpha_\mathrm{affin}(r):=\alpha_0(1-r_1^*/r), \quad r\geq r_1^*.
\end{align}
\end{subequations}
A popular choice of complex scalings are power functions
\begin{subequations}
\begin{align}
\tilde r(r)=r+i\alpha_0(r-r_1^*)^m, \quad r\geq r_1^*
\end{align}
with a constant $\alpha_0>0$ and $m\in\setN$. They correspond to
\begin{align}\label{eq:AlphaPower}
\tilde\alpha_\mathrm{power}(r)&:=\alpha_0(r-r_1^*)^m/r, \quad r\geq r_1^*
\end{align}
\end{subequations}
with a constant $\alpha_0>0$. A profile which is more or less motivated by the
aim to simplify analysis is
\begin{align}\label{eq:AlphaSmooth}
\begin{split}
&\tilde\alpha_\mathrm{smooth}\text{ non-decreasing and twice continuous differentiable in }\setR^+,\\
&\tilde\alpha_\mathrm{smooth}(r):=0 \quad\text{for}\quad r\leq r_1^* \quad\text{and}\quad
\tilde\alpha_\mathrm{smooth}(r):=\alpha_0 \quad\text{for}\quad r\geq r_2,
\end{split}
\end{align}
with constants $\alpha_0>0$, $r_2>r_1^*$. In particular, many authors (e.g.\
\cite{LassasSomersalo:98}, \cite{HohageSchmidtZschiedrich:03b},
\cite{BramblePasciak:07}, \cite{KimPasciak:09}) only consider profiles of the last kind
for their analysis. An infinitely many times differentiable example of Kind~\eqref{eq:AlphaSmooth} is
\begin{align}\label{eq:AlphaInfty}
\tilde\alpha_\infty(r):=\alpha_0 \cofii(r-r_1^*)
\end{align}
with constant $\alpha_0>0$, $r_2=r_1^*+1$ and
\begin{subequations}
\begin{align}
\label{eq:Smooth0toInfty}
\cofi(r)&:=\left\{\begin{array}{rl} 
0, &\text{for }r\leq 0,\\
\exp(-1/r), &\text{for }r>0,
\end{array}\right.\\
\label{eq:Smooth0to1}
\cofii(r)&:=\left\{\begin{array}{rl}
0, &\text{for }r\leq 0,\\
\frac{\cofi(r)}{\cofi(r)+\cofi(1-r)}, &\text{for }0<r<1,\\
1, &\text{for }r\geq 1,
\end{array}\right.
.\end{align}
\end{subequations}

In the following we introduce additional functions which will all depend on $\tilde\alpha$.
These auxiliary functions will be necessary to formulate the forthcoming theory.
We adopt the notation of Bramble and Pasciak~\cite{BramblePasciak:07}. Hence let
\begin{subequations}\label{eq:ComplexScalingQuantities}
\begin{align}
\label{eq:tdDef} \tilde d(r)&:=1+i\tilde\alpha(r),\\
\label{eq:trDef} \tilde r(r)&:=\tilde d(r)r,\\
\label{eq:alphaDef} \alpha(r)&:=r\partial_r\tilde\alpha(r)+\tilde\alpha(r),\\
\label{eq:dDef} d(r)&:=1+i\alpha(r),\\
\label{eq:dzDef} d_0&:=\displaystyle\lim_{r\to+\infty}(\tilde d(r)/|\tilde d(r)|),\\
\label{eq:PxDef} \Px(x)&:=|x|^{-2}xx^\top, \quad x\in\setR^3,
\end{align}
whereby $xx^\top$ denotes the dyadic product.
The definitions of $\alpha$ and $d$ have to be understood piece-wise. We note that the limes in~\eqref{eq:dzDef}
exists in $\setC$ due to Assumption~\ref{ass:TildeAlpha}.
The function $d$ is chosen such that $\partial_r\tilde r(r)=d(r)$. For $f=\tilde\alpha,\alpha,\tilde d,d,\tilde r$
we adopt the overloaded notation
\begin{align}\label{eq:OverloadedNotation}
f(x):=f(|x|), \quad x\in\setR^3.
\end{align}
\end{subequations}
Hence we write e.g.\ $f\circ\Qpol(r,\hat x)=f(r)$.\\

Consider a solution $(\spm,u)$ to~\eqref{eq:ResonanceProblemDistributional}. Formally we can define
$\tilde u\circ\Qpol(r,\hat x):=u\circ\Qpol(\tilde r(r),\hat x)$. Due to Assumption~\ref{ass:TildeAlpha},
Expansion~\eqref{eq:RadiationCondition} and the asymptotic behaviour of spherical Hankel functions
we expect that $\tilde u$ is exponentially decreasing with respect to $|x|$.
By means of the chain rule we can formally deduce that $(\spm,\tilde u)$ solves
$-\tilde\Delta \tilde u-\spm^2 \tilde u=0$ whereby
\begin{align}\label{eq:tlaplace-rhatx}
\tilde\Delta u\circ\Qpol := (\tilde dr)^{-2}d^{-1}\partial_r(\tilde d^2r^2d^{-1}\partial_r u\circ\Qpol)
+(\tilde dr)^{-2}\Delta_{\sph} u\circ\Qpol,
\end{align}
i.e.\
\begin{align}\label{eq:tlaplace-x}
\tilde\Delta u = (\tilde d^2d)^{-1}\div\big( (\tilde d^2d^{-1}\Px + d(\I-\Px))\nabla u\big),
\end{align}
whereby $\I$ denotes the three by three identity matrix.
Vice-versa we expect that for a solution $(\spm,\tilde u)$ to $-\tilde\Delta\tilde u-\spm^2\tilde u=0$ we can define
$u$ in reversal of $\tilde u$ and expect that $(\spm,u)$ solves $-\Delta u-\spm^2 u=0$.
However, since our coordinate transformation is complex valued this result is non-trivial.\\

We continue to formulate the respective eigenvalue problem for $\tilde u$.
For a Lipschitz domain $D\subset\Omega$ let
\begin{subequations}\label{eq:XD}
\begin{align}
X(D)&:=\{u\in \tilde H^1_\mathrm{0,loc}(D)\colon\langle u,u\rangle_{X(D)}<\infty\},\\
\langle u,u' \rangle_{X(D)}&:=\langle (|\tilde d^2d^{-1}|\Px+|d|(\I-\Px))\nabla u,\nabla u'\rangle_{L^2(D)}
+\langle |\tilde d^2d|u,u' \rangle_{L^2(D)}.
\end{align}
\end{subequations}
and
\begin{align}\label{eq:aD}
a_D(\spm; u,u'):=\langle (\tilde d^2d^{-1}\Px + d(\I-\Px))\nabla u,\nabla u'\rangle_{L^2(D)}
-\spm^2\langle \tilde d^2d u,u' \rangle_{L^2(D)}
\end{align}
for $\spm\in\setC$ and $u,u'\in X(D)$. By definition of $X(D)$ the sesquilinearform
$a_D(\spm;\cdot,\cdot)$ is bounded on $X(D)\times X(D)$. For $D=\Omega$ we set
\begin{align}\label{eq:X-sprX-a}
X:=X(\Omega), \qquad \langle \cdot,\cdot\rangle_X:=\langle \cdot,\cdot\rangle_{X(\Omega)},
\qquad a(\cdot;\cdot,\cdot):=a_\Omega(\cdot;\cdot,\cdot).
\end{align}
Consider the eigenvalue problem to
\begin{align}\label{eq:ResonanceProblemVariational}
\text{find }(\spm,\tilde u)\in\setC\times X\setminus\{0\}\text{ such that}\quad a(\spm;\tilde u,u')=0
\quad\text{for all }u'\in X.
\end{align}
Note that the introduced space $X$ is of importance only for profile functions with
$\tilde\alpha, \alpha\not\in L^\infty(\setR^+)$. Else wise $X$ is reduced to the standard
Sobolev space $H^1_0(\Omega)$ (equipped with an equivalent inner product).\\

We know from Theorem~2.16 of \cite{Halla:19Diss} that for each solution $(\spm,u)$ to~\eqref{eq:ResonanceProblemDistributional}
with $\Re(i\spm d_0)<0$ the function $\tilde u\circ\Qpol(r,\hat x):=u\circ\Qpol(\tilde r(r),\hat x)$ is well defined,
contained in $X$ and $(\spm,\tilde u)$ solves~\eqref{eq:ResonanceProblemVariational}.
Moreover, for any $\epsilon>0$ there exists a constant $C(\tilde u)>0$ so that
\begin{align}\label{eq:EstXNormU}
\|\tilde u\|_{X(B_r^c)}^2 \leq C(\tilde u) \int_r^{+\infty} e^{2(\Re(i\spm d_0)+\epsilon)\sqrt{1+\tilde\alpha(t)^2}t} \dd t
\quad\text{for all}\quad r\geq r_1^*.
\end{align}
Vice-versa if $(\spm,\tilde u)$ is a solution to~\eqref{eq:ResonanceProblemVariational} with
$\Re(i\spm d_0)<0$, then $u$ can be well defined in a reverse manner and $(\spm,u)$
solves~\eqref{eq:ResonanceProblemDistributional}.

\subsection{Domain truncation and discretization}\label{subsec:appr}

For a solution $(\spm,\tilde u)$ to~\eqref{eq:ResonanceProblemVariational} with
$\Re(i\spm d_0)<0$ it follows from \eqref{eq:EstXNormU} that $\tilde u$
decays exponentially to zero as $x\to\infty$. Thus it seems natural to
approximate~\eqref{eq:ResonanceProblemVariational} by replacing the domain
$\Omega$ with a bounded subdomain $\Omega_n$ and pose a homogeneous Dirichlet or
Neumann boundary condition at the artificial boundary
$\partial\Omega_n\setminus\partial\Omega$. As most authors we stick to Dirichlet
boundary conditions. The resulting equation can then be discretized with a
standard numerical scheme such as finite element methods. The question arises if
and also how fast the solutions to this approximation converge to the solutions
of the original Equation~\eqref{eq:ResonanceProblemVariational}.

It is a classical approach to separate the analysis into a truncation analysis and
a discretization analysis. This seperation allows to simplfy the analysis considerably.
In particular the analysis for the discretization step can be performed in the very same
manner as for classical problems posed on bounded domains.
However, such a seperated analysis does not ensure that every arbitrary (resonable)
sequence of combinations of domain truncations and finite element spaces yields a converging
approximation.

In the following we will explain why convenient techniques cannot be applied and therefore
introduce a new notion to perform the truncation analysis.
(Lateron, in Section~\ref{sec:appr_simul} we will also present a way to perform a
simultaneaous approximation analysis.)
To this end, we make our Assumptions on $\Omega_n$ more precise.
\begin{assumption}\label{ass:OmegaN}
The sequence of subdomains $(\Omega_n)_{n\in\setN}$ is such that for each $n\in\setN$
\begin{enumerate}
 \item $\Omega_n$ is a bounded Lipschitz domain,
 \item $\Omega_n\subset\Omega$,
 \item $\partial\Omega\subset\partial\Omega_n$,
 \item $\partial\Omega_n\setminus\partial\Omega$ splits $\Omega$ into two connected parts,
\end{enumerate}
and for any $R>0$ exists an index $n_0\in\setN$ such that $(\Omega\cap B_R)\subset \Omega_n$
for all $n>n_0$.
\end{assumption}
The PML approximation to~\eqref{eq:ResonanceProblemVariational} reads
\begin{align}\label{eq:ResonanceProblemPML}
\text{find }(\spm,u_n)\in\setC\times X(\Omega_n)\setminus\{0\}\text{ so that}
\quad a_{\Omega_n}(\spm;u_n,u_n')=0\quad\text{for all }u_n'\in X(\Omega_n).
\end{align}
We note that $\|\cdot\|_{X(\Omega_n)}$ is an equivalent norm to
$\|\cdot\|_{H^1(\Omega_n)}$ and hence $X(\Omega_n)=H^1_0(\Omega_n)$. Let
\begin{align}\label{eq:Xn}
X_n:=\{u\in X\colon u=0\text{ in }\Omega\setminus\Omega_n\}
\end{align}
and consider the problem to
\begin{align}\label{eq:ResonanceProblemPMLGalerkin}
\text{find }(\spm,u_n)\in\setC\times X_n\setminus\{0\}\text{ so that}
\quad a(\spm;u_n,u_n')=0\quad\text{for all }u_n'\in X_n.
\end{align}
It is obvious that for every solution $(\spm,u_n)$
to~\eqref{eq:ResonanceProblemPML} the extension $\hat u_n$ of $u_n$ to
$\Omega\setminus\Omega_n$ by zero is in $X_n$ and $(\spm,\hat u_n)$
solves~\eqref{eq:ResonanceProblemPMLGalerkin}. Vice-versa for every solution
$(\spm,u)$ to~\eqref{eq:ResonanceProblemPMLGalerkin} $(\spm,u|_{\Omega_n})$
solves~\eqref{eq:ResonanceProblemPML}. However as $X_n$ is a subspace of $X$ we
recognize~\eqref{eq:ResonanceProblemPMLGalerkin} as conform Galerkin
approximation to~\eqref{eq:ResonanceProblemVariational}, which restores a common
setup for numerical analysts. The former notion is motivated
by~\cite{HohageNannen:15} wherein certain finite element spaces are
considered directly as subspaces of $X$.
We note that the choice of Dirichlet boundary condition at the artificial
boundary is essential to ensure a conform approximation. Indeed an approximation
with Neumann boundary condition could be analyzed as non-conform approximation
to~\eqref{eq:ResonanceProblemVariational}. We will not continue further in this
direction as the analysis would be more intricate with barely additional gain.

Through our notion we can investigate the truncation error as a Galerkin error. A
classical way~\cite{BabuskaOsborn:91,Boffi:10} to analyze Galerkin
approximations to linear eigenvalue problems (such as ours) is to introduce
solution operators
\begin{align}\label{eq:SolutionOperatorA}
S\colon X\to X, \qquad S_n\colon X_n\to X_n
\end{align}
defined by
\begin{subequations}\label{eq:SolutionOperatorB}
\begin{align}
\label{eq:SolutionOperatorBi} a(1;Su,u')&=\langle \tilde d^2du,u' \rangle_{L^2(\Omega)}
&\quad\text{for all }u'\in X,\\
\label{eq:SolutionOperatorBii} a(1;S_nu_n,u_n')&=\langle \tilde d^2du_n,u_n' \rangle_{L^2(\Omega_n)}
&\quad\text{for all }u_n'\in X_n.
\end{align}
\end{subequations}
Of course it has to be ensured that $S$ and $S_n$ are well defined continuous
operators (for sufficiently large $n$) through
Equation~\eqref{eq:SolutionOperatorB}. The spectra of~\eqref{eq:ResonanceProblemVariational}
and~\eqref{eq:ResonanceProblemPMLGalerkin} are connected to the spectra of $S$
and $S_n$ respectively by the transformation
\begin{align}
\spm\mapsto \frac{1}{\spm^2-1}.
\end{align}
If $S$ is a compact operator it can be deduced that $S_n$ converges to $S$ in
operator norm which yields spectral convergence~\cite{BabuskaOsborn:91}.
However, the essential spectrum of $S$ equals $\{\frac{1}{z^2-1}\colon z\in\setC, \Re(izd_0)=0\}$
\cite[Theorem~4.6]{Halla:19Diss}.
Since the spectrum of a compact operator is discrete we deduce
that $S$ is not compact. Thus the standard theory~\cite{BabuskaOsborn:91,Boffi:10}
does not apply.

Differential operators with non-compact resolvent $S$ occur e.g.\ in
electromagnetism where sufficient conditions on the Galerkin spaces to ensure
spectral convergence have been obtained e.g.\
in~\cite{CaorsiFernandesRaffetto:00,Buffa:05}. The analysis therein is
based on~\cite{DesclouxNassifRappaz:78a}, which
state that
\begin{align}
\|S-S_n\|_n:=\sup_{u_n\in X_n\setminus\{0\}} \|(S-S_n)u_n\|_X/\|u_n\|_X \to 0 \quad\text{as}\quad n\to\infty
\end{align}
is sufficient to ensure spectral convergence. See also the very comprehensive
works \cite{ArnoldFalkWinther:10,ChristiansenWinther:13}.
However, in the previous references the essential spectrum consists only of one isolated eigenvalue with infinite dimensional eigenspace whereas in our case it consists of a continuum. Thus the techniques of~\cite{DesclouxNassifRappaz:78a,ArnoldFalkWinther:10,ChristiansenWinther:13} cannot be applied for our analysis. Roughly speaking we cannot hope to approximate an operator with a non-discrete essential spectrum by operators with discrete spectrum in a uniform way. All we can hope for is that we obtain locally (with respect to the spectral parameter) converging approximations.

Indeed local analysis techniques are the core of the holomorphic Fredholm
operator approximation theory \cite{Karma:96a,Karma:96b}.
We will recall the results of \cite{Halla:19Tcomp} in Section~\ref{sec:framework} and subsequently explain how to fit our eigenvalue problem at hand and its approximations into this theory.

\section{Analytical framework}\label{sec:framework}

We introduce some common notation and recall the framework of weakly $T(\cdot)$-coercive
operator functions and $T(\cdot)$-compatible Galerkin approximations \cite{Halla:19Tcomp}
in Subsection~\ref{subsec:Tframework}.
Subsequently in Subsection~\ref{subsec:Tconstruction} we construct a suitable $T(\cdot)$-operator
function for $a(\cdot;\cdot,\cdot)$ as defined in \eqref{eq:X-sprX-a}.

\subsection{T-analysis framework}\label{subsec:Tframework}

For generic Banach spaces $(X, \|\cdot\|_X)$, $(Y, \|\cdot\|_Y)$ denote $L(X,Y)$
the space of all bounded linear operators from $X$ to $Y$ with operator norm
$\|A\|_{L(X,Y)}:=\sup_{u\in X\setminus\{0\}} \|Au\|_Y/\|u\|_X$, $A\in L(X,Y)$.
We further set $L(X):=L(X,X)$. For generic Hilbert spaces
$(X, \spl\cdot,\cdot\spr_X)$, $(Y, \spl\cdot,\cdot\spr_Y)$ and $A\in L(X,Y)$ we denote $A^*\in L(Y,X)$ its adjoint
operator defined through $\spl u,A^*u' \spr_X=\spl Au,u'\spr_Y$ for all $u\in X,u'\in Y$.
We call an operator $A\in L(X)$ is coercive if $\inf_{u\in X\setminus\{0\}}
|\spl Au,u\spr_X|/\|u\|^2_X$ $>0$. We say that $A\in L(X)$ is weakly coercive, if there
exists a compact operator $K\in L(X)$ so that $A+K$ is coercive. For bijective $T\in L(X)$
we say that $A$ is (weakly) $T$-coercive, if $T^*A$ is (weakly) coercive.
Let $\Lambda\subset\setC$ be open and connected 
and consider operator functions $A(\cdot), T(\cdot)\colon \Lambda\to L(X)$
so that $T(\lambda)$ is bijective for all $\lambda\in\Lambda$. We call $A(\cdot)$ (weakly) ($T(\cdot)$-)coercive
if $A(\lambda)$ is (weakly) ($T(\lambda)$-)coercive for all $\lambda\in\Lambda$.
We denote the spectrum of $A(\cdot)$ as
$\sigma\big(A(\cdot)\big):=\{\lambda\in\Lambda\colon A(\lambda)\text{ is not bijective}\}$ and
the resolvent set as $\rho\big(A(\cdot)\big):=\Lambda\setminus\sigma\big(A(\cdot)\big)$.
For a closed subspace $X_n\subset X$ denote $P_n \in L(X,X_n)$ the orthogonal projection.
Consider $A\in L(X)$ to be weakly $T$-coercive. For a sequence $(X_n)_{n\in\setN}$ of closed subspaces
$X_n\subset X$ with $\lim_{n\in\setN}\|u-P_nu\|_X=0$ for each $u\in X$,
we say that the Galerkin approximation $P_nA|_{X_n}\in L(X_n)$ is $T$-compatible, if each $P_nA|_{X_n}$ is
Fredholm with index zero and there exists a sequence of Fredholm index zero operators
$(T_n)_{n\in\setN}, T_n\in L(X_n)$ so that
\begin{align}\label{eq:discretenorm}
\|T-T_n\|_n:=\sup_{u_n\in X_n\setminus\{0\}} \|(T-T_n)u_n\|_X /\|u_n\|_X
\end{align}
tends to zero as $n\to\infty$.
Let $A(\cdot)\colon \Lambda\to L(X)$ be weakly $T(\cdot)$-coercive. We say that the Galerkin approximation
$P_nA(\cdot)|_{X_n}\colon\Lambda\to L(X_n)$ is $T(\cdot)$-compatible, if $P_nA(\lambda)|_{X_n}\in L(X_n)$ is
$T(\lambda)$-compatible for each $\lambda\in\Lambda$.

We recall from \cite[Corollary~2.8]{Halla:19Tcomp}:
Let $A(\cdot)\colon\Lambda\to L(X)$ be a weakly $T(\cdot)$-coercive holomorphic operator function with non-empty resolvent set and $A_n(\cdot)\colon\Lambda\to L(X_n)$ be a $T(\cdot)$-compatible Galerkin approximation. Then
\begin{enumerate}[i)]
 \item\label{item:SP-a} For every eigenvalue $\spml_0$ of $A(\cdot)$ exists a sequence $(\spml_n)_{n\in\setN}$
 converging to $\spml_0$ with $\spml_n$ being an eigenvalue of $A_n(\cdot)$ for almost all $n\in\setN$.
 \item\label{item:SP-b} Let $(\spml_n, u_n)_{n\in\setN}$ be a sequence of normalized eigenpairs of $A_n(\cdot)$, i.e.\
 $A_n(\spml_n)u_n$ $=0$
 and $\|u_n\|_X=1$, so that $\spml_n\to \spml_0\in\Lambda$. Then
 \begin{enumerate}[a)]
  \item $\spml_0$ is an eigenvalue of $A(\cdot)$,
  \item $(u_n)_{n\in\setN}$ is a compact sequence and its cluster points are normalized eigenelements of $A(\spml_0)$.
 \end{enumerate}
 \item\label{item:SP-c} For every compact $\tilde\Lambda\subset\rho(A)$ the sequence $(A_n(\cdot))_{n\in\setN}$ is
 stable on  $\tilde\Lambda$, i.e.\ there exist $n_0\in\setN$ and $c>0$ such that $\|A_n(\spml)^{-1}\|_{L(X_n)}\leq c$ for
 all  $n>n_0$ and all $\spml\in\tilde\Lambda$.
 \item\label{item:SP-d} \label{item:Stability}For every compact $\tilde\Lambda\subset\Lambda$ with rectifiable boundary
 $\partial\tilde\Lambda\subset\rho\big(A(\cdot)\big)$ exists an index $n_0\in\setN$ such that
 \begin{align*}
 \dim G(A(\cdot),\spml_0) = \sum_{\spml_n\in\sigma\left(A_n(\cdot)\right)\cap\tilde\Lambda}
 \dim G(A_n(\cdot),\spml_n).
 \end{align*}
 for all $n>n_0$, whereby $G(B(\cdot),\spml)$ denotes the generalized eigenspace of an operator function
 $B(\cdot)$ at $\spml\in\Lambda$.
\end{enumerate}
Let $\tilde\Lambda\subset\Lambda$ be a compact set with rectifiable boundary
$\partial\tilde\Lambda\subset\rho\big(A(\cdot)\big)$, $\tilde\Lambda\cap\sigma\big(A(\cdot)\big)=\{\spml_0\}$ and
\begin{align}\label{eq:deltan}
\begin{split}
\delta_n&:=\max_{\substack{u_0\in G(A(\cdot),\spml_0)\\\|u_0\|_X\leq1}} \, \inf_{u_n\in X_n} \|u_0-u_n\|_X,\\
\delta_n^*&:=\max_{\substack{u_0\in G(A^*(\ol{\cdot}),\spml_0)\\\|u_0\|_X\leq1}} \, \inf_{u_n\in X_n} \|u_0-u_n\|_X,
\end{split}
\end{align}
whereby $\overline{\spml_0}$ denotes the complex conjugate of $\spml_0$ and $A^*(\cdot)$
the adjoint operator function of $A(\cdot)$ defined by $A^*(\spml):=A(\spml)^*$ for each $\spml\in\Lambda$.
Then there exist $n\in\setN$ and $c>0$ such that for all $n>n_0$
\begin{enumerate}[i)]
\setcounter{enumi}{4}
  \item\label{item:SP-e}
  \begin{align*}
  |\spml_0-\spml_n|\leq c(\delta_n\delta_n^*)^{1/\varkappa\left(A(\cdot),\spml_0\right)}
  \end{align*}
  for all $\spml_n\in\sigma\big(A_n(\cdot)\big)\cap\tilde\Lambda$, whereby
  $\varkappa\left(A(\cdot),\spml_0\right)$ denotes the maximal length of a Jordan
  chain of $A(\cdot)$ at the eigenvalue $\spml_0$,
  \item\label{item:SP-f}
  \begin{align*}
  |\spml_0-\spml_n^\mathrm{mean}|\leq c\delta_n\delta_n^*
  \end{align*}
  whereby $\spml_n^\mathrm{mean}$ is the weighted mean of all the eigenvalues of $A_n(\cdot)$ in $\tilde\Lambda$
  \begin{align*}
  \spml_n^\mathrm{mean}:=\sum_{\spml\in\sigma\left(A_n(\cdot)\right)\cap\tilde\Lambda}\spml\,
  \frac{\dim G(A_n(\cdot),\spml)}{\dim G(A(\cdot),\spml_0)},
  \end{align*}
  \item\label{item:SP-g}
  \begin{align*}
  \begin{split}
  \inf_{u_0\in\ker A(\spml_0)} \|u_n-u_0\|_X &\leq c \Big(|\spml_n-\spml_0|+
  \max_{\substack{u'_0\in\ker A(\spml_0)\\\|u_0'\|_X\leq1}} \inf_{u'_n\in X_n} \|u'_0-u'_n\|_X\Big)\\
  &\leq c\big(c(\delta_n\delta_n^*)^{1/\varkappa\left(A(\cdot),\spml_0\right)} + \delta_n\big)
  \end{split}
  \end{align*}
  for all $\spml_n\in\sigma\big(A_n(\cdot)\big)\cap\tilde\Lambda$ and all $u_n\in \ker A_n(\spml_n)$ with $\|u_n\|_X=1$.
\end{enumerate}

For the forthcoming analysis it will be more suitable to work with operators
instead of sesquilinearforms. Thus for a bounded $\spml$-dependent sesquilinearform
$a(\spml;\cdot,\cdot)\colon X\times X\to\setC$ we associate with the Riesz
representation theorem a $\spml$-dependent operator $A(\spml)\in L(X)$ defined through
\begin{align}\label{eq:Ak}
\langle A(\spml)u,u' \rangle_X = a(\spml;u,u') \qquad\text{for all}\quad u,u'\in X.
\end{align}
Eigenvalue Problem~\eqref{eq:ResonanceProblemVariational} can now be expressed as
\begin{align}\label{eq:ResonanceProblemOperator}
\text{find}\quad (\spml,u)\in\setC\times X\setminus\{0\}\quad\text{so that}\quad A(\spml)u=0.
\end{align}
If $X$ is approximated by a closed subspace $X_n$ we can associate a
$\spml$-dependent operator $A_n(\spml)\in L(X_n)$ through
\begin{align}\label{eq:Akn}
\langle A_n(\spml)u_n,u'_n \rangle_X = a(\spml;u_n,u'_n) \qquad\text{for all}\quad u_n,u'_n\in X_n.
\end{align}
Similarly~\eqref{eq:ResonanceProblemPMLGalerkin} can now be expressed as
\begin{align}\label{eq:ResonanceProblemOperatorGalerkin}
\text{find}\quad (\spml,u_n)\in\setC\times X_n\setminus\{0\}\quad\text{so that}\quad A_n(\spml)u_n=0.
\end{align}
The operators $A(\spml)$ and $A_n(\spml)$ are related through $A_n(\spml)=P_nA(\spml)|_{X_n}$.

\subsection{Construction of \texorpdfstring{$T(\cdot)$}{T(.)}}\label{subsec:Tconstruction}

Now we discuss the construction of a suitable $T(\cdot)$-operator function for the operator function
$A(\cdot)$ associated to the sesquilinear $a(\cdot;\cdot,\cdot)$ from \eqref{eq:X-sprX-a}.
Fortunately $T(\spm)$ can be realized as a simple multiplication operator.
For specific profiles of the Kind~\eqref{eq:AlphaSmooth} this was already exploited implicitly in~\cite{BramblePasciak:07}
from wherein the ansatz is taken and extended.
For our forthcoming analysis we additionally require the following assumption.

\begin{assumption}\label{ass:TildeAlpha2}
Let $\tilde\alpha$ and $r^*_1$ be as is Assumption~\ref{ass:TildeAlpha} and $\tilde d$, $d$ be as
in~\eqref{eq:ComplexScalingQuantities}. Let
\begin{enumerate}
 \item\label{it:limtdd} $\displaystyle\lim_{r\to+\infty} \tilde d(r)|d(r)|/\big(|\tilde d(r)|d(r)\big)=1$,
 \item\label{it:limdabsd} $\displaystyle\lim_{r\to+\infty} \big(\partial_r(\tilde d/|\tilde d|)\big)(r)
 =\lim_{r\to\infty} \big(\partial_r(d/|d|)\big)(r)=0$.
\end{enumerate}
\end{assumption}
Assumption~\ref{ass:TildeAlpha2}.\ref{it:limtdd} is necessary for Lemma~\ref{lem:arg} which will yield the essential
argument to prove the ``coercivity part'' in~Theorem~\ref{thm:AwTc}. Assumption~\ref{ass:TildeAlpha2}.\ref{it:limdabsd}
on the other hand will be necessary to prove the ``compactness part'' in~Theorem~\ref{thm:AwTc}.

It can easily be seen that any $\tilde\alpha$ of the Kind~\eqref{eq:AlphaAffin},
\eqref{eq:AlphaPower} and \eqref{eq:AlphaSmooth} suffices Assumption~\ref{ass:TildeAlpha2}.
In general, any reasonable profile function that comes to our mind suffices Assumption~\ref{ass:TildeAlpha2}.

Next we introduce two lemmata which will be essential for our analysis. Let
\begin{align}\label{eq:DefArg}
\arg z \colon \setC\setminus\{0\}\to [-\pi,\pi), \quad z=|z|\exp(i\arg z).
\end{align}
\begin{lemma}\label{lem:arg}
Let Assumptions~\ref{ass:TildeAlpha} and \ref{ass:TildeAlpha2} hold. Then there
exists $\tau\in(0,\pi/2)$ so that $\arg \big(d(r)/\tilde d(r)\big)\in [0,\tau]$
for all $r>r_1^*$.
\end{lemma}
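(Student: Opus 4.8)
The plan is to compute $d(r)/\tilde d(r)$ explicitly and show that both its real and imaginary parts are nonnegative for $r > r_1^*$, which places the quotient in the closed first quadrant, and then to upgrade this to a closed sector $[0,\tau]$ with $\tau < \pi/2$ by controlling the behaviour as $r \to \infty$. Write $d = 1 + i\alpha$ and $\tilde d = 1 + i\tilde\alpha$ with $\alpha(r) = r\partial_r\tilde\alpha(r) + \tilde\alpha(r)$. Since $\tilde\alpha$ is non-decreasing (Assumption~\ref{ass:TildeAlpha}.\ref{it:talphamonoton}) we have $\partial_r\tilde\alpha \geq 0$, hence $\alpha(r) \geq \tilde\alpha(r) \geq 0$ for all $r$, with $\alpha(r),\tilde\alpha(r) > 0$ for $r > r_1^*$. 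Then
\begin{align*}
\frac{d}{\tilde d} = \frac{(1+i\alpha)(1-i\tilde\alpha)}{1+\tilde\alpha^2}
= \frac{1+\alpha\tilde\alpha + i(\alpha-\tilde\alpha)}{1+\tilde\alpha^2},
\end{align*}
so $\Re(d/\tilde d) = (1+\alpha\tilde\alpha)/(1+\tilde\alpha^2) > 0$ and $\Im(d/\tilde d) = (\alpha-\tilde\alpha)/(1+\tilde\alpha^2) \geq 0$ for all $r > r_1^*$. This already gives $\arg(d(r)/\tilde d(r)) \in [0,\pi/2)$ pointwise.

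It remains to obtain a \emph{uniform} bound $\tau < \pi/2$, i.e.\ to keep the argument away from $\pi/2$; equivalently, to bound $\Im(d/\tilde d)/\Re(d/\tilde d) = (\alpha-\tilde\alpha)/(1+\alpha\tilde\alpha)$ from above. The only way this ratio can approach its supremum limit corresponding to argument $\pi/2$ is if it blows up, which since $\alpha,\tilde\alpha \geq 0$ can only happen along a sequence $r_k$ with $\alpha(r_k) \to \infty$ while $\alpha(r_k)\tilde\alpha(r_k)$ stays bounded, forcing $\tilde\alpha(r_k) \to 0$; but $\tilde\alpha$ is non-decreasing and positive on $(r_1^*,\infty)$, so $\tilde\alpha$ is bounded below by $\tilde\alpha(r_0) > 0$ for any fixed $r_0 > r_1^*$ on $[r_0,\infty)$, and on the compact interval $[r_1^* + \varepsilon, r_0]$ the continuous function $(\alpha - \tilde\alpha)/(1+\alpha\tilde\alpha)$ is bounded. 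The genuinely delicate region is $r \downarrow r_1^*$, where both $\alpha$ and $\tilde\alpha$ tend to $0$: here one uses the $C^2$ regularity up to $r_1^*$ from Assumption~\ref{ass:TildeAlpha}.\ref{it:talphaC2}. Near $r_1^*$, $\tilde\alpha(r) \approx \partial_r\tilde\alpha(r_1^*)(r-r_1^*)$ and $\alpha(r) \approx \tilde\alpha(r) + r_1^*\partial_r\tilde\alpha(r_1^*)$... more carefully, $\alpha - \tilde\alpha = r\partial_r\tilde\alpha \to r_1^*\partial_r\tilde\alpha(r_1^*)$ and $1 + \alpha\tilde\alpha \to 1$ as $r \downarrow r_1^*$, so the ratio $(\alpha-\tilde\alpha)/(1+\alpha\tilde\alpha)$ extends continuously to $[r_1^*,\infty)$ with a finite value at $r_1^*$. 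Hence the ratio is continuous and bounded on all of $[r_1^*,\infty)$ once we also invoke Assumption~\ref{ass:TildeAlpha2}.\ref{it:limdabsd} (or a direct limit argument) to control the tail; I expect Assumption~\ref{ass:TildeAlpha2} is what guarantees $\limsup_{r\to\infty}(\alpha-\tilde\alpha)/(1+\alpha\tilde\alpha) < \infty$.

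To assemble: the function $g(r) := (\alpha(r)-\tilde\alpha(r))/(1+\alpha(r)\tilde\alpha(r))$ is continuous on $[r_1^*,\infty)$, nonnegative, with a finite limit as $r \to \infty$ (using Assumption~\ref{ass:TildeAlpha2}), hence $M := \sup_{r > r_1^*} g(r) < \infty$; set $\tau := \arctan M \in [0,\pi/2)$, and if $M = 0$ choose any $\tau \in (0,\pi/2)$. Since $\Re(d/\tilde d) > 0$ and $\Im(d/\tilde d) \geq 0$ with $\Im/\Re = g \leq M = \tan\tau$, we get $\arg(d(r)/\tilde d(r)) \in [0,\tau]$ for all $r > r_1^*$, as claimed. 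The main obstacle is the uniformity of the bound as $r \to \infty$: one must verify that the ratio $(\alpha - \tilde\alpha)/(1 + \alpha\tilde\alpha)$ does not escape to infinity, which is precisely where the limit hypotheses in Assumption~\ref{ass:TildeAlpha2} enter; the behaviour near $r_1^*$, although it looks singular, is tamed by the one-sided $C^2$ extension.
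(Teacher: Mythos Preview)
Your argument is correct and follows essentially the same route as the paper: establish $\arg(d/\tilde d)\in[0,\pi/2)$ pointwise (you via the explicit formula $d/\tilde d=(1+\alpha\tilde\alpha+i(\alpha-\tilde\alpha))/(1+\tilde\alpha^2)$, the paper via $\arg d\geq\arg\tilde d$), and then upgrade to a uniform bound $\tau<\pi/2$ by combining continuity of $d/\tilde d$ on $[r_1^*,\infty)$ with control of the limit at infinity. One correction: the relevant part of Assumption~\ref{ass:TildeAlpha2} for the tail is item~\ref{it:limtdd}, not item~\ref{it:limdabsd}. The hypothesis $\tilde d\,|d|/(|\tilde d|\,d)\to1$ is precisely $\arg(d/\tilde d)\to0$, which gives $g(r)\to0$ and hence the finite supremum; the derivative condition in item~\ref{it:limdabsd} does not by itself yield this. (Incidentally, your middle paragraph already contains a self-contained argument via monotonicity of $\tilde\alpha$ that bounds $g$ on $[r_0,\infty)$ by $1/\tilde\alpha(r_0)$ without invoking Assumption~\ref{ass:TildeAlpha2} at all, so the lemma in fact holds under Assumption~\ref{ass:TildeAlpha} alone; the paper's proof, like your final assembly, opts for the shorter route through Assumption~\ref{ass:TildeAlpha2}.\ref{it:limtdd}.)
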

\begin{proof}
Let $r>r_1^*$. Due $\tilde\alpha(r)\geq0$, the definition of $\tilde d, d$ and
Assumption~\ref{ass:TildeAlpha}.\ref{it:talphamonoton} it holds
$\arg \tilde d(r) \leq \arg d(r)$. Since $\arg \big(d(r)/\tilde d(r)\big) =
\arg d(r) -\arg \tilde d(r)$ it follows $\arg \big(d(r)/\tilde d(r)\big) \in [0,\pi/2)$.
Due to Assumption~\ref{ass:TildeAlpha}.\ref{it:talphaC2} $\tilde d/d$ is continuous.
Together with Assumption~\ref{ass:TildeAlpha2}.\ref{it:limtdd} it follows
$\sup_{r>r_1^*} \arg \big(d(r)/\tilde d(r)\big)<\pi/2$. Hence the claim is proven.
\end{proof}

\begin{lemma}\label{lem:WeigthedL2Compactness}
Let $\eta_1\colon\Omega\to\setC$ be measurable so that $\eta_1|_{\Omega\cap B_n}\in L^\infty(\Omega\cap B_n)$
for all $n\in\setN$. Let $Y\subset L^2(\Omega)$ be a Hilbert space so that $\|\eta_1 u\|_{L^2(\Omega)}\leq C\|u\|_Y$
for a constant $C>0$ and all $u\in Y$ and so that the embedding and restriction operator
$K_n\colon Y\to L^2(\Omega\cap B_n)\colon u\mapsto u|_{\Omega\cap B_n}$ is compact for each $n\in\setN$.
Let $\eta_2\in L^\infty(\Omega)$ be so that $\lim_{r\to\infty}\|\eta_2\|_{L^\infty(B_r^c)}=0$. Then the
multiplication and embedding operator $K_{\eta_1\eta_2}\colon Y\to L^2(\Omega)\colon u\mapsto \eta_1\eta_2 u$ is compact.
\end{lemma}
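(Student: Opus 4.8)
The statement is a fairly standard "tail estimate + compactness on truncations" argument, so the plan is to approximate $K_{\eta_1\eta_2}$ in operator norm by the truncated operators $u\mapsto \eta_1\eta_2\,\mathbf{1}_{\Omega\cap B_n}u$ and show each of those is compact. Let me write out the steps.

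First I would fix $n\in\setN$ and consider the operator $T_n\colon Y\to L^2(\Omega)$, $T_nu:=\eta_1\eta_2\,\mathbf{1}_{\Omega\cap B_n}u$, i.e.\ the composition of the compact embedding/restriction $K_n\colon Y\to L^2(\Omega\cap B_n)$ with the multiplication operator $L^2(\Omega\cap B_n)\to L^2(\Omega)$, $v\mapsto (\eta_1\eta_2 v)$ extended by zero. The latter is bounded because $\eta_1|_{\Omega\cap B_n}\in L^\infty$ by hypothesis and $\eta_2\in L^\infty(\Omega)$, so its $L(L^2(\Omega\cap B_n),L^2(\Omega))$-norm is at most $\|\eta_1\|_{L^\infty(\Omega\cap B_n)}\|\eta_2\|_{L^\infty(\Omega)}$. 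A bounded operator composed with a compact operator is compact, so each $T_n$ is compact.

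Next I would estimate $\|K_{\eta_1\eta_2}-T_n\|_{L(Y,L^2(\Omega))}$. For $u\in Y$ we have $(K_{\eta_1\eta_2}-T_n)u=\eta_1\eta_2\,\mathbf{1}_{B_n^c}u$ (supported in $\Omega\cap B_n^c$), hence
\begin{align*}
\|(K_{\eta_1\eta_2}-T_n)u\|_{L^2(\Omega)}
&=\|\eta_2\,\eta_1 u\|_{L^2(\Omega\cap B_n^c)}
\leq \|\eta_2\|_{L^\infty(B_n^c)}\,\|\eta_1 u\|_{L^2(\Omega)}
\leq C\,\|\eta_2\|_{L^\infty(B_n^c)}\,\|u\|_Y,
\end{align*}
using the hypothesis $\|\eta_1 u\|_{L^2(\Omega)}\leq C\|u\|_Y$. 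Therefore $\|K_{\eta_1\eta_2}-T_n\|_{L(Y,L^2(\Omega))}\leq C\,\|\eta_2\|_{L^\infty(B_n^c)}\to 0$ as $n\to\infty$ by the assumption $\lim_{r\to\infty}\|\eta_2\|_{L^\infty(B_r^c)}=0$. Thus $K_{\eta_1\eta_2}$ is the operator-norm limit of compact operators and hence compact.

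The argument is essentially routine; the only point requiring mild care is the bookkeeping that guarantees $T_n$ is genuinely bounded, namely that the multiplier $\eta_1$ need not be globally in $L^\infty(\Omega)$ but is controlled locally, which is exactly why the truncation to $B_n$ is introduced, and that the split $\eta_1\eta_2 = \eta_1\cdot\eta_2$ lets us put the local $L^\infty$ bound on $\eta_1$ and the decaying global bound on $\eta_2$. No single step is a real obstacle; the mild subtlety is simply to keep the two roles of $\eta_1$ (locally bounded multiplier making $T_n$ bounded) and $\eta_2$ (globally bounded, decaying at infinity, controlling the tail) cleanly separated.
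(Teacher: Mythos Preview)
Your proof is correct. It differs from the paper's argument in presentation rather than in essence: the paper works directly with sequences, extracting by a diagonal argument a subsequence that is Cauchy in $L^2(\Omega)$ (using compactness of each $K_n$ to get convergence on $\Omega\cap B_{n_1}$ and the decay of $\eta_2$ together with $\|\eta_1 u\|_{L^2}\le C\|u\|_Y$ to control the tail), whereas you package the same two ingredients as an operator-norm approximation $T_n\to K_{\eta_1\eta_2}$ by compact operators. Your route is slightly cleaner since it avoids the explicit diagonalization and makes the role of each hypothesis transparent; the paper's route has the minor advantage of not invoking the closedness of the compact operators under norm limits as a black box. Substantively the arguments are equivalent.
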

\begin{proof}
Consider a sequence $(u_n)_{n\in\setN}$ with $u_n\in Y, \|u_n\|_Y\leq1$. We
construct a Cauchy subsequence as follows. We choose a subsequence $N_1\colon\setN\to\setN$
so that $(K_1u_{N_1(n)})_{n\in\setN}$ converges. Iteratively for $m\in\setN$ we
choose subsequences $N_m\colon\setN\to\setN$ so that $(K_mu_{N_m(n)})_{n\in\setN}$
converges. Via diagonalization we construct a subsequence $N(n):=N_n(n)$. Let
$\epsilon>0$ and $n_1>0$ be so that $\|\eta_2\|_{L^\infty(B_{n_1}^c)}<\epsilon/(4C)$.
Let $n_2>0$ be so that
$\|K_{n_1}(u_{N(n)}-u_{N(n')})\|_{L^2(\Omega\cap B_{n_1})}<\epsilon/(2\|\eta_1\eta_2\|_{L^\infty(\Omega\cap B_{n_1})})$
for all $n,n'>n_2$. It follows
\begin{align*}
\|\eta_1\eta_2 (u_{N(n)}-u_{N(n')})\|_{L^2(\Omega)}
&\leq \|\eta_1\eta_2\|_{L^\infty(\Omega)} \|u_{N(n)}-u_{N(n')}\|_{L^2(\Omega\cap B_{n_1})}
\\&+2\|\eta_2\|_{L^\infty(B_{n_1}^c)}
< \epsilon.
\end{align*}
Hence the claim is proven.
\end{proof}

Our analysis will further require the following functions.
\begin{subequations}\label{eq:dcont}
\begin{align}
\label{eq:HatAlpha}
\hat\alpha(r)&:=\left\{\begin{array}{rl}\lim_{\rho\to r_1^*+}\alpha(\rho)&\text{for }0\leq r\leq r_1^*,\\
\alpha(r)&\text{for }r>r_1^*,\end{array}\right.\\
\label{eq:Hatd}
\hat d(r)&:=1+i\hat\alpha(r), \quad r\geq0.
\end{align}
\end{subequations}
Again, we adopt the overloaded notation $f(x):=f(|x|), x\in\Omega$ for $f=\hat\alpha,\hat d$.

\begin{lemma}\label{lem:TisIsomorphism}
Let Assumptions~\ref{ass:TildeAlpha} and \ref{ass:TildeAlpha2} hold.
For all $\spm\in\setC\setminus\{0\}$ and all $u\in X$ let
\begin{align}\label{eq:DefTHH}
T(\spm)u:=\left\{\begin{array}{ll}
\ol{\frac{|\hat d|}{\hat d}} u &\text{for } \arg(-\spm^2d_0^2)\in [-\pi,0), \vspace{3mm}\\
\ol{\frac{\hat d}{\tilde d^2}\frac{|\tilde d|^2}{|\hat d|}} u &\text{for }\arg (-\spm^2d_0)\in [0,\pi).
\end{array}\right.
\end{align}
Then $T(\spm)\in L(X)$ is bijective for all $\spm\in\setC\setminus\{0\}$.
\end{lemma}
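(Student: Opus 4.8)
The plan is to observe that $T(\spm)$ is in both cases a multiplication operator by a bounded, measurable, scalar-valued function $\eta$ whose modulus is bounded away from $0$ and $\infty$, and to show that such a multiplication operator maps $X$ boundedly into $X$ with bounded inverse (which is simply multiplication by $1/\eta$, of the same type). The key point is that $\eta$ depends only on $|x|$ through $\hat d$ (hence through $\hat\alpha$), so multiplication by $\eta$ commutes with the angular part of the gradient and only couples to the radial derivative via $\partial_r\eta$; thus I must check that $\partial_r\eta$ is bounded so that $\nabla(\eta u)=\eta\nabla u+(\partial_r\eta)\,\hat x\, u$ stays in the weighted $L^2$-space defining $X$.

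First I would treat the two cases uniformly by writing $T(\spm)u=\overline{\eta}\,u$ with $\eta=|\hat d|/\hat d$ in the first case and $\eta=(\hat d/\tilde d^2)(|\tilde d|^2/|\hat d|)$ in the second; note that which case applies is determined by $\spm$ but $\eta$ itself does not otherwise depend on $\spm$, and $\overline{\cdot}$ is an $\setR$-linear isometry so it suffices to treat multiplication by $\eta$. Next I would record the elementary bounds: by Assumption~\ref{ass:TildeAlpha}, $\hat\alpha\geq0$ is continuous on $[0,\infty)$ and $C^1$ (indeed $C^2$ on $(r_1^*,\infty)$ with continuous extension), $\hat d=1+i\hat\alpha$ so $|\hat d|\geq1$, and $\tilde d=1+i\tilde\alpha$ with $|\tilde d|\geq1$; hence $|\eta|$ is bounded above and below. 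Boundedness of $\eta$ and of $1/\eta$ as functions, together with the fact that both have bounded radial derivative — here I use that $\partial_r\hat\alpha$ and $\partial_r\tilde\alpha$ are bounded on $[r_1^*,\infty)$ (for $\tilde\alpha$ by Assumption~\ref{ass:TildeAlpha}.\ref{it:talphaC2} together with the boundedness of $\partial_r(\tilde d/|\tilde d|)$ from Assumption~\ref{ass:TildeAlpha2}.\ref{it:limdabsd}, and similarly for $\hat\alpha=\alpha$ via $\partial_r(d/|d|)$) — gives $\partial_r\eta\in L^\infty(\Omega)$ and $\partial_r(1/\eta)\in L^\infty(\Omega)$.

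Then I would verify $T(\spm)\in L(X)$: for $u\in X$ write $\nabla(\eta u)=\eta\nabla u+u\,\partial_r\eta\,\hat x$ (in the distributional sense, using that $\eta$ is Lipschitz on each annulus $A_{r_1^*,R}$ and constant modulus structure is irrelevant to differentiability). The first term is estimated by $\|\eta\|_{L^\infty}\|\nabla u\|$ in the weighted gradient norm of $X(\Omega)$; the second term is estimated by $\|\partial_r\eta\|_{L^\infty}\|u\|$ in the weighted $L^2$-part of the $X$-norm, where one checks that the gradient weight $|\tilde d^2d^{-1}|\Px+|d|(\I-\Px)$ and the mass weight $|\tilde d^2 d|$ are comparable up to the bounded factors $|d|^2$, $|d|$ — this uses $|d|\geq1$ and that $|\tilde d|,|d|$ differ by bounded factors, so no unbounded blow-up occurs. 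The trace condition $u|_{\partial\Omega}=0$ is preserved since $\eta$ is bounded and $\hat\alpha(r)=0$ near $\partial\Omega$ (as $\Omega^c\subset B_{r_1^*}$), so $\eta\equiv$ const there. Hence $\eta u\in X$ and $\|\eta u\|_X\leq C\|u\|_X$. The same argument applied to $1/\eta$ shows that multiplication by $1/\eta$ is in $L(X)$, and it is the two-sided inverse of multiplication by $\eta$; composing with the isometric conjugation gives bijectivity of $T(\spm)$.

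The main obstacle I anticipate is the bookkeeping in the second case, where $\eta=\hat d|\tilde d|^2/(\tilde d^2|\hat d|)$ involves both $\hat d$ (built from $\alpha=r\partial_r\tilde\alpha+\tilde\alpha$) and $\tilde d$: one must confirm that $\partial_r\eta$ is genuinely bounded even though $\partial_r\alpha$ involves $\partial_r\partial_r\tilde\alpha$, which is only assumed continuous with a continuous extension to $[r_1^*,\infty)$, not a priori bounded at infinity. The resolution is that $\partial_r(d/|d|)\to0$ by Assumption~\ref{ass:TildeAlpha2}.\ref{it:limdabsd}, so $\partial_r(d/|d|)$ is bounded on $[r_1^*,\infty)$; combined with $|d|\geq1$ this controls $\partial_r(\hat d/|\hat d|)$, and likewise Assumption~\ref{ass:TildeAlpha2}.\ref{it:limtdd} lets one handle the cross terms $\partial_r(\tilde d^2/|\tilde d|^2)$, so that $\partial_r\eta$ decomposes into products of bounded factors. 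Once this boundedness is in hand the rest is the routine estimate sketched above.
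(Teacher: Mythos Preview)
Your plan is correct and follows essentially the same route as the paper: show that multiplication by a function $\eta\in W^{1,\infty}(\Omega)$ is bounded on $X$ (the cross term $u\nabla\eta$ lands in the radial gradient weight $|\tilde d^2 d^{-1}|$, which is dominated by the mass weight $|\tilde d^2 d|$ since $|d|\ge1$), and then verify that the specific symbol $\eta$ belongs to $W^{1,\infty}$ using Assumptions~\ref{ass:TildeAlpha}.\ref{it:talphaC2} and~\ref{ass:TildeAlpha2}.\ref{it:limdabsd}.

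Two small remarks. First, you overlook the simplification that in \emph{both} cases $|\eta|=1$ identically: for $\eta=|\hat d|/\hat d$ this is immediate, and for $\eta=\hat d|\tilde d|^2/(\tilde d^2|\hat d|)$ one has $|\eta|=|\hat d|\,|\tilde d|^2/(|\tilde d|^2|\hat d|)=1$. The paper exploits this to conclude at once that $1/\eta=\bar\eta\in W^{1,\infty}$, sparing the separate estimate of $\partial_r(1/\eta)$ that you propose. Second, your aside that ``$|\tilde d|,|d|$ differ by bounded factors'' is neither assumed nor needed (and is in general false under the stated hypotheses); the estimate only requires the one-sided inequalities $|\tilde d^2 d^{-1}|\le|\tilde d^2 d|$ and $|d|\le|\tilde d^2 d|$, which follow from $|d|\ge1$ and $|\tilde d|\ge1$.
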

\begin{proof}
For any $\eta\in W^{1,\infty}(\Omega)$ and $u\in X$ it holds
\begin{align*}
\|\eta u\|_X^2
&= \langle (|\tilde d^2d^{-1}|\Px+|d|(\I-\Px))(\eta\nabla u+u\nabla\eta),\eta\nabla u+u\nabla\eta\rangle_{L^2(\Omega)}
\\&+\langle |\tilde d^2d|\eta u,\eta u \rangle_{L^2(\Omega)}
\leq 3\|\eta\|_{W^{1,\infty}(\Omega)}^2 \|u\|_X^2.
\end{align*}
Thus multiplication with $\eta$ is bounded from $X\to X$.
If $|\eta|=1$ it follows $1/\eta \in W^{1,\infty}(\Omega)$ as well.
Hence the inverse of multiplication with $\eta$, which is multiplication
with $1/\eta$, is bounded from $X\to X$ as well.

Let $\eta=\ol{\frac{|\hat d|}{\hat d}}$ or $\eta=\ol{\frac{\hat d}{\tilde d^2}\frac{|\tilde d|^2}{|\hat d|}}$.
It follows $|\eta|=1$. Due to the definition of $\hat d$~\eqref{eq:dcont} and Assumption~\ref{ass:TildeAlpha},
$\eta$ is weakly differentiable. Due to Assumption~\ref{ass:TildeAlpha}.\ref{it:talphaC2} and
Assumption~\ref{ass:TildeAlpha2}.\ref{it:limdabsd} it follows $\nabla \eta\in L^\infty(\Omega)$ and hence
$\eta\in W^{1,\infty}(\Omega)$. Thus the claim is proven.
\end{proof}

\begin{theorem}\label{thm:AwTc}
Let Assumptions~\ref{ass:TildeAlpha} and \ref{ass:TildeAlpha2} hold.
Let $a(\cdot;\cdot,\cdot)$ and $X$ be as in~\eqref{eq:X-sprX-a}, $A(\cdot)$ be as
in~\eqref{eq:Ak}, $T(\cdot)$ be as in~\eqref{eq:DefTHH}, $d_0$ be as in~\eqref{eq:dzDef} and
\begin{align}\label{eq:Lambdad0}
\Lambda_{d0}:=\{z\in\setC\colon \Re(izd_0)\neq0\}.
\end{align}
Then $A(\cdot)\colon\Lambda_{d_0}\to L(X)$ is weakly $T(\cdot)$-coercive.
\end{theorem}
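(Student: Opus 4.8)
The plan is to establish that for each $\spm \in \Lambda_{d_0}$ the operator $T(\spm)^* A(\spm)$ is weakly coercive on $X$, i.e.\ that it is the sum of a coercive operator and a compact one. Since $T(\spm)$ is a unitary multiplication operator (by Lemma~\ref{lem:TisIsomorphism}), it suffices to unwind the sesquilinearform
\[
\langle T(\spm)^* A(\spm) u, u\rangle_X = a\big(\spm; u, T(\spm)u\big)
\]
and show that its real (or modulus) part dominates $\|u\|_X^2$ up to a compact perturbation. I would treat the two cases in the definition~\eqref{eq:DefTHH} of $T(\spm)$ separately, according to whether $\arg(-\spm^2 d_0^2)\in[-\pi,0)$ or $\arg(-\spm^2 d_0)\in[0,\pi)$; these two regimes together exhaust $\Lambda_{d_0}$ (the excluded set $\Re(i\spm d_0)=0$ is precisely where neither argument condition gives a strictly acute rotation).

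The key computational step is to insert $u' = \eta u$ with $\eta = \overline{|\hat d|/\hat d}$ (resp.\ $\eta = \overline{(\hat d/\tilde d^2)(|\tilde d|^2/|\hat d|)}$) into $a_\Omega(\spm;\cdot,\cdot)$ from~\eqref{eq:aD}, using the product rule $\nabla(\eta u) = \eta\nabla u + u\nabla\eta$. This produces three types of terms: (i) a principal term $\langle(\tilde d^2 d^{-1}\Px + d(\I-\Px))\nabla u, \overline{\nabla \eta}\,\bar u\rangle$ — wait, rather $\langle(\tilde d^2 d^{-1}\Px + d(\I-\Px))\nabla u, \eta\nabla u\rangle$ with the coefficient matrix rotated by $\eta$; (ii) the zero-order term $-\spm^2\langle \tilde d^2 d\, u, \eta u\rangle$; and (iii) cross terms involving $\nabla\eta$, which are lower order. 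For the principal and zero-order terms one checks pointwise that multiplying the $X$-inner-product weights $|\tilde d^2 d^{-1}|$, $|d|$, $|\tilde d^2 d|$ by the phase factor $\eta$ against the form weights $\tilde d^2 d^{-1}$, $d$, $-\spm^2\tilde d^2 d$ yields complex numbers lying in a fixed half-plane $\{z : \Re(e^{i\theta_0}z) \geq c > 0\}$ — this is exactly what Lemma~\ref{lem:arg} delivers, since $\arg(d/\tilde d)$ being confined to $[0,\tau]$ with $\tau<\pi/2$ controls the relevant phase differences, and the hypothesis $\Re(i\spm d_0)\neq0$ keeps the zero-order phase away from the boundary. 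After multiplying the whole form by a suitable unimodular constant $e^{i\theta_0}$ (depending on $\spm$ through $\arg(-\spm^2 d_0)$), the real part of terms (i)+(ii) is bounded below by $c\|u\|_X^2$.

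The compactness step handles (iii): the cross terms and the ``tail corrections'' coming from the difference between $\hat d$ and the true $\tilde d, d$ in the region $r > r_1^*$. Here I would invoke Lemma~\ref{lem:WeigthedL2Compactness}: the factor $\nabla\eta$ (resp.\ the mismatch $\tilde d|d|/(|\tilde d|d) - 1$ etc.) vanishes as $r\to\infty$ by Assumption~\ref{ass:TildeAlpha2}.\ref{it:limdabsd} (resp.\ \ref{it:limtdd}), so these are multiplications by $L^\infty$ symbols decaying at infinity, composed with the locally compact embedding $X \hookrightarrow L^2(\Omega\cap B_n)$; the gradient-times-$L^2$ structure still fits the lemma with $Y = X$ and $\eta_1$ absorbing the bounded gradient weights. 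Also the zero-order part of the form restricted to $\{r \le r_1^*\}$, where $\tilde\alpha = 0$ and the coefficients are real, contributes a genuinely indefinite piece $-\spm^2\langle u, u\rangle_{L^2(\Omega\cap B_{r_1^*})}$ that is compact relative to $X$ by Rellich; this is absorbed into $K$.

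**Main obstacle.** The delicate point is the phase bookkeeping in the coercivity step: one must verify that a \emph{single} rotation constant $e^{i\theta_0}$ simultaneously makes the real parts of the $\Px$-part, the $(\I - \Px)$-part, and the zero-order part all positive. The $\Px$-coefficient $\tilde d^2 d^{-1}$ and the $(\I-\Px)$-coefficient $d$ rotate differently, and the zero-order coefficient $-\spm^2 \tilde d^2 d$ brings in the eigenvalue phase; reconciling these is exactly why the two-branch definition of $T(\spm)$ in~\eqref{eq:DefTHH} is needed and why Lemma~\ref{lem:arg}'s uniform bound $\tau < \pi/2$ (rather than just $<\pi$) is essential. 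I expect the argument to reduce, in each branch, to showing that a finite collection of explicitly-given unimodular-times-positive factors has arguments lying in an interval of length $<\pi$, uniformly in $r$, after which choosing $\theta_0$ as (minus) the midpoint closes the estimate.
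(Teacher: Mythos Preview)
Your proposal is correct and follows essentially the same route as the paper: the same two-branch case split on $\arg(-\spm^2 d_0^2)$, the same coercive-plus-compact decomposition of $a(\spm;\cdot,T(\spm)\cdot)$, with coercivity obtained via Lemma~\ref{lem:arg} by confining all relevant phase factors to a common sector of opening $<\pi$ and then rotating, and compactness obtained from Lemma~\ref{lem:WeigthedL2Compactness} applied to the $\nabla\eta$-cross term and to the tail correction $\big(\tilde d^2 d|\hat d|/(|\tilde d^2 d|\hat d)-d_0^2\big)$. Your aside about needing a separate Rellich argument for the zero-order term on $\{r\le r_1^*\}$ is unnecessary --- the constant $\hat d(r_1^*)$ appears in the symbol of $T(\spm)$ precisely so that the interior phase $\overline{|\hat d(r_1^*)|/\hat d(r_1^*)}$ already lies in the same sector as the exterior phases, and the paper keeps that piece in the coercive part --- but dumping it into the compact part instead is harmless.
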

\begin{proof}
We consider the two cases $\arg(-\spm^2d_0^2)\in(-\pi,0)$ and $\arg(-\spm^2d_0^2)\in[0,\pi)$ separately. We split
the sesquilinear form $a(\spm;\cdot,T(\spm)\cdot)$ into a coercive part $a_1(\cdot,\cdot)$ and a compact part
$a_2(\cdot,\cdot)$.\\
\textit{First case $\spm\in\Lambda_{d_0}$ with $\arg(-\spm^2d_0^2)\in(-\pi,0)$:}\quad
A direct computation yields
\begin{align*}
a(\spm;u,T(\spm)u')=a_1(u,u')+a_2(u,u')
\end{align*}
with
\begin{align*}
a_1(u,u') &:= \bigg\langle \bigg(\frac{\tilde d^2|\hat d|}{d\hat d}\Px+\frac{d|\hat d|}{\hat d}(\I-\Px)\bigg)\nabla u,
\nabla u'\bigg\rangle_{L^2(\Omega)}-\spm^2d_0^2\langle |\tilde d^2d|u,u'\rangle_{L^2(\Omega)},\\
a_2(u,u') &:= \bigg\langle \frac{\tilde d^2}{d}\partial_r u,u'\partial_r\ol{\left(\frac{|\hat d|}{\hat d}\right)}\bigg\rangle_{L^2(\Omega)}
-\spm^2\bigg\langle\bigg(\frac{\tilde d^2d|\hat d|}{|\tilde d^2d|\hat d}-d_0^2\bigg)|\tilde d^2d|u,u'\bigg\rangle_{L^2(\Omega)}.
\end{align*}
Recall that $\hat d(r)=d(r)$ for $r>r_1^*$ and $\hat d(r)=1+i\lim_{r\to r_1^*+}\alpha(r)$
for $r\leq r_1^*$. Due to Assumptions \ref{ass:TildeAlpha}.\ref{it:talphamonoton} and \ref{ass:TildeAlpha}.\ref{it:talphaC2}
it holds $\arg \hat d(r_1^*)\in [0,\pi/2)$. Let $\tau\in(0,\pi/2)$ be as in Lemma~\ref{lem:arg} and
\begin{align*}
\tau_1:=\min \{ -2\tau, -\arg \hat d(r_1^*), \arg(-\spm^2d_0^2) \}.
\end{align*}
It follows that $\tau_1\in(-\pi,0)$ and
\begin{align*}
\arg \left(\frac{\tilde d^2|\hat d|}{d\hat d}\right)(r), \, \arg\left(\frac{d|\hat d|}{\hat d}\right)(r),
\, \arg(-\spm^2d_0^2) \in [\tau_1,0]
\end{align*}
for all $r\geq0$. Thus $\Re(ie^{-i(\pi+\tau_1)/2}a_1(u,u)) \geq \cos(\tau_1/2)\min\{1,|\spm^2|\} \|u\|_X^2$
for all $u\in X$, i.e.\ $a_1(\cdot,\cdot)$ is coercive.
Further $a_2(u,u')=\langle (K_1^*L_1-\spm^2K_2^*L_2)u,u'\rangle_X$ with bounded operators
\begin{align*}
L_1&\colon X\to L^2(\Omega)\colon u\mapsto \frac{\tilde d}{d^{1/2}} \partial_r u,\\
K_1&\colon X\to L^2(\Omega)\colon u\mapsto \left(\ol{\partial_r\left(\frac{|\hat d|}{\hat d}\right)}\right) \ol{\frac{\tilde d}{d^{1/2}}}u,\\
L_2&\colon X\to L^2(\Omega)\colon u\mapsto |\tilde dd^{1/2}| u,\\
K_2&\colon X\to L^2(\Omega)\colon u\mapsto \left(\frac{\tilde d^2d|\hat d|}{|\tilde d^2d|\hat d}-d_0^2\right) |\tilde dd^{1/2}|u.
\end{align*}
From the definitions of $d_0$ and $\hat d$ it follows $\left(\frac{\tilde d^2d|\hat d|}{|\tilde d^2d|\hat d}-d_0^2\right)(r)\to0$
as $r\to+\infty$. From Assumption~\ref{ass:TildeAlpha2}.\ref{it:limdabsd} follows
$\left(\ol{\partial_r\left(\frac{|\hat d|}{\hat d}\right)}\right)(r)\to0$ as $r\to+\infty$.
Lemma~\ref{lem:WeigthedL2Compactness}, $1/|d|\leq1$ and the compact Sobolev embedding $H^1(D)\to L^2(D)$
for bounded Lipschitz domains $D$ yield that $K_1$ and $K_2$ are compact. Hence $A_2$ ($\spl A_2u,u'\spr_X=a_2(u,u')$)
is compact too.\\
\textit{Second case $\spm\in\Lambda_{d_0}$ with $\arg(-\spm^2d_0^2)\in[0,\pi)$:}\quad
A direct computation yields $a(\spm;u,T(\spm)u')=a_1(u,u')+a_2(u,u')$ with
\begin{align*}
a_1(u,u') &:= \bigg\langle \bigg(\frac{\hat d|\tilde d^2|}{d|\hat d|}\Px+\frac{d\hat d|\tilde d^2|}{\tilde d^2|\hat d|}(\I-\Px)\bigg)\nabla u,
\nabla u'\bigg\rangle_{L^2(\Omega)}-\spm^2d_0^2\langle |\tilde d^2d|u,u'\rangle_{L^2(\Omega)},\\
a_2(u,u') &:= \bigg\langle \frac{\tilde d^2}{d}\partial_r u,u'\partial_r\ol{\left(\frac{\hat d|\tilde d^2|}{\tilde d^2|\hat d|}\right)}\bigg\rangle_{L^2(\Omega)}
-\spm^2\bigg\langle\bigg(\frac{\hat d|\tilde d^2|\tilde d^2d}{\tilde d^2|\hat d||\tilde d^2d|}-d_0^2\bigg)|\tilde d^2d|u,u'\bigg\rangle_{L^2(\Omega)}.
\end{align*}
As in the previous case we find that
\begin{align*}
\arg \left(\frac{\hat d|\tilde d^2|}{d|\hat d|}\right)(r), \, \arg\left(\frac{d\hat d|\tilde d^2|}{\tilde d^2|\hat d|}\right)(r),
\, \arg(-\spm^2d_0^2) \in [0,\tau_1]
\end{align*}
for all $r\geq0$ with $\tau_1:=\max \{ 2\tau, \arg \hat d(r_1^*), \arg(-\spm^2d_0^2) \}\in[0,\pi)$.
It follows
\begin{align*}
\Re(-ie^{i(\pi-\tau_2)/2}a_1(u,u)) \geq \cos(\tau_1/2) \min\{1,|\spm^2|\} \|u\|_X^2
\end{align*}
for all $u\in X$, i.e.\ $a_1(\cdot,\cdot)$ is coercive.
Further $a_2(u,u')=\langle (K_1^*L_1-\spm^2K_2^*L_2)u,u'\rangle_X$ with bounded operators
\begin{align*}
L_1&\colon X\to L^2(\Omega)\colon u\mapsto \frac{\tilde d}{d^{1/2}} \partial_r u,\\
K_1&\colon X\to L^2(\Omega)\colon u\mapsto \left(\ol{\partial_r\left(\frac{\hat d|\tilde d^2|}{\tilde d^2|\hat d|}\right)}\right) \ol{\frac{\tilde d}{d^{1/2}}}u,\\
L_2&\colon X\to L^2(\Omega)\colon u\mapsto |\tilde dd^{1/2}| u,\\
K_2&\colon X\to L^2(\Omega)\colon u\mapsto \left(\frac{\hat d|\tilde d^2|\tilde d^2d}{\tilde d^2|\hat d||\tilde d^2d|}-d_0^2\right) |\tilde dd^{1/2}|u.
\end{align*}
From the definitions of $d_0$, $\hat d$ and Assumption~\ref{ass:TildeAlpha2}.\ref{it:limtdd}
follows $\left(\frac{\hat d|\tilde d^2|\tilde d^2d}{\tilde d^2|\hat d||\tilde d^2d|}-d_0^2\right)(r)\to0$
as $r\to+\infty$. From Assumption~\ref{ass:TildeAlpha2}.\ref{it:limdabsd} it follows
$\left(\ol{\partial_r\left(\frac{\hat d|\tilde d^2|}{\tilde d^2|\hat d|}\right)}\right)(r)\to0$ as $r\to+\infty$.
Again, Lemma~\ref{lem:WeigthedL2Compactness}, $1/|d|\leq1$ and the compact Sobolev embedding $H^1(D)\to L^2(D)$ for
bounded Lipschitz domains $D$ yield that $K_1$ and $K_2$ are compact. Hence $A_2$ ($\spl A_2u,u'\spr_X=a_2(u,u')$)
is compact too.
\end{proof}

If we demand in addition to Assumptions~\ref{ass:TildeAlpha}, \ref{ass:TildeAlpha2} that $|d|/|\tilde d|$ is
bounded, then it can be proven \cite[Theorem 4.6]{Halla:19Diss} that Theorem~\ref{thm:AwTc} is sharp, i.e.\
the essential spectrum of $A(\cdot)$ equals $\setC\setminus\Lambda_{d_0}=\{\omega\in\setC\colon\Re(-i\omega d_0)=0\}$.

It is less intuitive why we need to employ the multiplication operator $T(\spm)$.
The matrix of the principle part of $a(\spm;\cdot,\cdot)$ is $\tilde d^2d^{-1}\Px+d(\I-\Px)$.
The coefficients are bounded away from zero and only take values in the closed salient sector
spanned by $(1+i\alpha(r))^{\pm1}$. However, as the domain is unbounded the (asymptotic) complex sign
of the $L^2$-term $-\spm^2d_0^3$ also has to be taken into account. Although there is no way to estimate
$1+i\alpha(r)$ in terms of $d_0$ without further assumptions on $\tilde\alpha$. Nonetheless the asymptotic complex sign
of the matrix coefficients is $d_0$. Thus it is meaningful to suitably rotate the complex sign of the principle part
especially in the preasymptotic regime of the coefficients. The rotation for the $L^2$-term in the preasymptotic regime
can be neglected as $L^2$-integrals on bounded sets lead to compact operators. In~\cite{BramblePasciak:07} it was noted that 
a rotation by $d^{-1}$ yields the desired properties.
Be aware that for different dimensions and different equations other rotations
are necessary. A choice leading to coefficients $1$ and $\tilde d^2/d^2$ or $1$
and $d^2/\tilde d^2$ (depending on the complex sign of $-\spm^2d_0^2$)
in the principle part of the equation usually does the job.

\section{Subsequent approximation}\label{sec:appr_subseq}

We consider a sequence of finite subdomains $(\Omega_n)_{n\in\setN}$ which suffices
Assumption~\ref{ass:OmegaN}, corresponding subspaces $X_n$ defined by~\eqref{eq:Xn}
and corresponding operator functions $A_n(\cdot)$ defined by~\eqref{eq:Akn}. We
investigate the approximation of $A(\cdot)$ by $A_n(\cdot)$.

\begin{lemma}\label{lem:XnTinvariant}
Let Assumptions~\ref{ass:TildeAlpha}, \ref{ass:TildeAlpha2} and \ref{ass:OmegaN} hold.
Let $X_n$ be as in~\eqref{eq:Xn} and $T(\cdot)$ be as in~\eqref{eq:DefTHH}.
Then $X_n$ is $T(\spm)$-invariant and $T^{-1}(\spm)$-invariant for all $n\in\setN$, $\spm\in\setC\setminus\{0\}$,
i.e.\ $T(\spm)u_n, T^{-1}(\spm)u_n\in X_n$ for all $u_n\in X_n$, $n\in\setN$, $\spm\in\setC\setminus\{0\}$.
\end{lemma}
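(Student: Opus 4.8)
The plan is to exploit that $T(\spm)$ acts as a pointwise multiplication operator, so that the defining constraint of $X_n$ is automatically preserved. Recall from~\eqref{eq:DefTHH} that $T(\spm)u=\ol\eta\,u$ where $\eta$ equals either $\tfrac{|\hat d|}{\hat d}$ or $\tfrac{\hat d}{\tilde d^2}\tfrac{|\tilde d|^2}{|\hat d|}$, depending on $\spm$; the proof of Lemma~\ref{lem:TisIsomorphism} established that in either case $\eta\in W^{1,\infty}(\Omega)$ with $|\eta|=1$ pointwise, that multiplication by $\eta$ maps $X$ boundedly into itself, and that $T(\spm)$ is bijective on $X$ with inverse again a multiplication operator (namely by $1/\ol\eta=\eta$, since $|\eta|=1$). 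I would simply combine this with the definition~\eqref{eq:Xn} of $X_n$.

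First I would fix $\spm\in\setC\setminus\{0\}$, $n\in\setN$ and $u_n\in X_n$, so that $u_n=0$ almost everywhere in $\Omega\setminus\Omega_n$. Since $T(\spm)u_n$ is the pointwise product $\ol\eta\,u_n$, it also vanishes a.e.\ in $\Omega\setminus\Omega_n$; and $T(\spm)u_n\in X$ by Lemma~\ref{lem:TisIsomorphism}. By the definition of $X_n$ this yields $T(\spm)u_n\in X_n$. The argument for $T^{-1}(\spm)$ is verbatim the same: $T^{-1}(\spm)$ is multiplication by $\eta$, which again preserves both membership in $X$ (Lemma~\ref{lem:TisIsomorphism}) and the property of vanishing on $\Omega\setminus\Omega_n$.

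There is no real obstacle here: the statement is essentially immediate once one records that $T(\cdot)$ is realized as a multiplication operator and that the cut-off defining $X_n$ is an almost-everywhere pointwise condition, preserved under multiplication by any measurable function. The mild point worth stating explicitly is precisely this — it is what makes the truncation conforming compatibly with the $T(\cdot)$-framework — and Assumption~\ref{ass:OmegaN} enters only insofar as it guarantees that $X_n$ is a well-defined closed subspace of $X$.
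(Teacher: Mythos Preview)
Your proof is correct and follows the same idea as the paper's: since $T(\spm)$ and $T^{-1}(\spm)$ are pointwise multiplication operators, they cannot enlarge the support of a function, so the defining condition $u_n=0$ on $\Omega\setminus\Omega_n$ is preserved. The paper states this in one line, but your more explicit version is fine.
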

\begin{proof}
A multiplication operator does not increase the support of a function.
\end{proof}

\begin{theorem}[Spectral convergence]\label{thm:SpectralConvergenceHelmholtz}
Let Assumptions~\ref{ass:TildeAlpha} and \ref{ass:TildeAlpha2} hold.
Let $X$ and $a(\cdot;\cdot,\cdot)$ be as in~\eqref{eq:X-sprX-a},
$A(\cdot)$ be as in~\eqref{eq:Ak}, $T(\cdot)$ be as in~\eqref{eq:DefTHH} and
\begin{align}\label{eq:Lambdad0pm}
\Lambda_{d_0}^\pm:=\{z\in\setC\colon \pm\Re(izd_0)<0\}.
\end{align}
Let Assumption~\ref{ass:OmegaN} hold.
Let $X_n$ be as in~\eqref{eq:Xn} and $A_n(\cdot)$ be as in~\eqref{eq:Akn}.

Then $A(\cdot)\colon \Lambda_{d_0}^\pm\to L(X)$ is a weakly $T(\cdot)$-coercive
holomorphic Fredholm operator function with non-empty resolvent set $\rho\big(A(\cdot)\big)$
and $A_n(\cdot)\colon \Lambda_{d_0}^\pm\to L(X_n)$ is a $T(\cdot)$-compatible approximation,
i.e.\ the convergence results \ref{item:SP-a})-\ref{item:SP-g}) from Subsection~\ref{subsec:Tframework} hold.
\end{theorem}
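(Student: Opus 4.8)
The plan is to verify the hypotheses of \cite[Corollary~2.8]{Halla:19Tcomp} with $\Lambda=\Lambda_{d_0}^\pm$ and then read off \ref{item:SP-a})--\ref{item:SP-g}). First note that $\Lambda_{d_0}^+$ and $\Lambda_{d_0}^-$ are open half-planes, in particular open and connected (unlike $\Lambda_{d_0}$ itself, which is their union), and that $-\Lambda_{d_0}^\pm=\Lambda_{d_0}^\mp$. Since by \eqref{eq:aD} and \eqref{eq:Ak} the operator $A(\spm)$ is a polynomial in $\spm^2$ with bounded, $\spm$-independent coefficient operators, $A(\cdot)$ is entire and satisfies $A(\spm)=A(-\spm)$; the same holds for $A_n(\spm)=P_nA(\spm)|_{X_n}$.

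Weak $T(\cdot)$-coercivity of $A(\cdot)$ on $\Lambda_{d_0}^\pm$ is Theorem~\ref{thm:AwTc} restricted to the half-plane. As $T(\spm)$ is bijective by Lemma~\ref{lem:TisIsomorphism} and $T(\spm)^*A(\spm)$ is weakly coercive, i.e.\ a sum of a coercive (hence bijective) and a compact operator, each $A(\spm)$ is Fredholm of index zero; being Fredholm of index zero, $A(\spm)$ is non-bijective precisely when $\ker A(\spm)\neq\{0\}$, so $\sigma\big(A(\cdot)\big)$ consists of eigenvalues. To see $\rho\big(A(\cdot)\big)\neq\emptyset$ recall from \cite[Theorem~2.16]{Halla:19Diss} that every eigenpair $(\spm,\tilde u)$ of $A(\cdot)$ in $\{\Re(i\spm d_0)<0\}=\Lambda_{d_0}^+$ yields an outgoing solution $(\spm,u)$ of~\eqref{eq:ResonanceProblemDistributional}, and, using $A(\spm)=A(-\spm)$, that every eigenvalue of $A(\cdot)$ in $\Lambda_{d_0}^-$ corresponds, after the reflection $\spm\mapsto-\spm$, to such a resonance. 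Since the resonances of the exterior Dirichlet Laplacian form a discrete set (cf.\ \cite[Theorem~4.6]{Halla:19Diss}), $\sigma\big(A(\cdot)\big)$ is discrete in $\Lambda_{d_0}^\pm$ and hence $\rho\big(A(\cdot)\big)\neq\emptyset$.

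For the $T(\cdot)$-compatibility of the Galerkin approximation, density of $\bigcup_n X_n$ in $X$ — equivalently $\|u-P_nu\|_X\to0$ for every $u\in X$ — follows from the exhaustion property in Assumption~\ref{ass:OmegaN} by a standard cut-off argument, the cut-off error being controlled by the weighted tail $\|u\|_{X(B_R^c)}\to0$ since the $L^2$-weight $|\tilde d^2 d|$ dominates the weights occurring in the gradient part of $\|\cdot\|_X$. Next take $T_n(\spm):=T(\spm)|_{X_n}$: by Lemma~\ref{lem:XnTinvariant} it maps $X_n$ bijectively onto $X_n$, with inverse $T^{-1}(\spm)|_{X_n}$, hence is Fredholm of index zero, and $(T(\spm)-T_n(\spm))u_n=0$ for all $u_n\in X_n$, so $\|T(\spm)-T_n(\spm)\|_n=0$. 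It remains to show that each $A_n(\spm)$ is Fredholm of index zero, and this is the step where Lemma~\ref{lem:XnTinvariant} is essential: for $u_n,u_n'\in X_n$ one has $T(\spm)u_n'\in X_n$, so with the decomposition $a(\spm;u,T(\spm)u')=a_1(u,u')+a_2(u,u')$ from the proof of Theorem~\ref{thm:AwTc},
\[
\langle T_n(\spm)^*A_n(\spm)u_n,u_n'\rangle_X=a(\spm;u_n,T(\spm)u_n')=a_1(u_n,u_n')+a_2(u_n,u_n'),
\]
i.e.\ $T_n(\spm)^*A_n(\spm)=A_{1,n}+A_{2,n}$ with $A_{1,n}$ coercive on $X_n$ (the coercivity constant of $a_1$ on $X$ survives on the subspace) and $A_{2,n}=P_nA_2|_{X_n}$ compact (a compression of the compact operator $A_2$ of Theorem~\ref{thm:AwTc}). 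Hence $A_n(\spm)$ is weakly $T_n(\spm)$-coercive, in particular Fredholm of index zero, and $A_n(\cdot)$ is a $T(\cdot)$-compatible Galerkin approximation. Invoking \cite[Corollary~2.8]{Halla:19Tcomp} then gives \ref{item:SP-a})--\ref{item:SP-g}).

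The main obstacle is the last point, the Fredholmness of $A_n(\spm)$ on the infinite-dimensional space $X_n$, which is not automatic; everything rests on the $T(\spm)$-invariance of $X_n$, which simultaneously lets the coercive/compact splitting of Theorem~\ref{thm:AwTc} descend to $X_n$ and makes the discrete-norm condition $\|T(\spm)-T_n(\spm)\|_n=0$ trivial. The remaining items (holomorphy, the non-emptiness of the resolvent set, and the cut-off density argument) are essentially bookkeeping.
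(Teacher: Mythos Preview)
Your proof is correct and follows the same route as the paper: holomorphy via the polynomial structure, weak $T(\cdot)$-coercivity from Theorem~\ref{thm:AwTc}, non-empty resolvent via the resonance correspondence (the paper cites \cite[Theorems~2.8--2.9]{Halla:19Diss} rather than discreteness of exterior Dirichlet resonances), and $T(\cdot)$-compatibility via Lemma~\ref{lem:XnTinvariant} with $T_n(\spm):=T(\spm)|_{X_n}$ so that $\|T(\spm)-T_n(\spm)\|_n=0$. You supply more detail than the paper on the density $\|u-P_nu\|_X\to0$ and on why $A_n(\spm)$ is Fredholm of index zero (by letting the coercive/compact splitting from Theorem~\ref{thm:AwTc} descend to the $T(\spm)$-invariant subspace $X_n$), both of which the paper's proof leaves implicit.
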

\begin{proof}
Since $A(\spm)$ is a polynomial in $\spm$ it is holomorphic.
Due to Theorem~\ref{thm:AwTc} $A(\spm)$ is weakly $T(\spm)$-coercive.
Due to \cite[Theorem~2.16]{Halla:19Diss} and \cite[Theorems~2.8-2.9]{Halla:19Diss}
the resolvent sets $\rho\big(A(\cdot)\big)\cap\Lambda_{d_0}^\pm$ are non-empty.
Thus $A(\cdot)\colon \Lambda_{d_0}^\pm\to L(X)$ is a weakly $T(\cdot)$-coercive
holomorphic Fredholm operator function with non-empty resolvent set $\rho\big(A(\cdot)\big)$.
Due to Lemma~\ref{lem:XnTinvariant} $X_n$ is $T(\spm)$-invariant for all $n\in\setN$, $\spm\in\Lambda_{d_0}^\pm$.
Hence with $T_n(\spm):=T(\spm)|_{X_n}$ it hold $T_n(\spm), T^{-1}_n(\spm) \in L(X_n)$
and $\|T(\spm)-T_n(\spm)\|_n=0$ for all $n\in\setN$, $\spm\in\Lambda_{d_0}^\pm$ and consequently
$A_n(\spm)$ and $T_n(\spm)$ are Fredholm with index zero for all $\spm\in\Lambda_{d_0}^\pm$.
\end{proof}

Theorem~\ref{thm:SpectralConvergenceHelmholtz} yields convergence rates with respect to the
best approximation errors~\eqref{eq:deltan}.
To estimate these we introduce the next Lemma~\ref{lem:BestApproximation}.

\begin{lemma}\label{lem:BestApproximation}
Let Assumptions~\ref{ass:TildeAlpha}, \ref{ass:TildeAlpha2} and~\ref{ass:OmegaN} hold.
Let $X$ be as in~\eqref{eq:X-sprX-a} and $X_n$ be as in~\eqref{eq:Xn}.
Let $r_n>0$ be so that $\Omega\cap B_{r_n+1}\subset\Omega_n$. Then there
exists a constant $C>0$ independent of $n$ so that
\begin{align}
\inf_{u_n\in X_n} \|u-u_n\|_X \leq C \|u\|_{X(B_{r_n}^c)}
\end{align}
for all $u\in X$.
\end{lemma}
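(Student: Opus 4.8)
The plan is a standard localization argument: multiply $u$ by a radial cutoff that equals $1$ on $B_{r_n}$, vanishes outside $B_{r_n+1}$, and has a transition layer of fixed width one, so that the gradient of the cutoff stays bounded uniformly in $n$. Concretely, I would fix once and for all a profile $\psi\in C^\infty(\setR)$ with $0\le\psi\le1$, $\psi\equiv1$ on $(-\infty,0]$ and $\psi\equiv0$ on $[1,+\infty)$, and set $\chi_n(x):=\psi(|x|-r_n)$. Then $\chi_n\in W^{1,\infty}(\setR^3)$ with $\|\chi_n\|_{W^{1,\infty}(\Omega)}\le 1+\|\psi'\|_{L^\infty(\setR)}=:C_1$, a bound independent of $n$ since $|\nabla\chi_n(x)|=|\psi'(|x|-r_n)|$; this uniformity is exactly what makes the lemma work and it relies on the hypothesis $\Omega\cap B_{r_n+1}\subset\Omega_n$ (a buffer of width one) rather than merely $\Omega\cap B_{r_n}\subset\Omega_n$. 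By construction $\chi_n\equiv1$ on $\overline{B_{r_n}}$ and $\chi_n\equiv0$ on $B_{r_n+1}^c$.

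Given $u\in X$, I would take $u_n:=\chi_n u$ and verify $u_n\in X_n$. Since $\chi_n\in W^{1,\infty}(\Omega)$, the $W^{1,\infty}$-multiplier estimate established at the start of the proof of Lemma~\ref{lem:TisIsomorphism} (valid verbatim with $\Omega$ replaced by any measurable $D\subset\Omega$) gives $u_n\in X$, and $u_n$ inherits the homogeneous trace on $\partial\Omega$ from $u$. For the support condition: every $x\in\Omega\setminus\Omega_n$ satisfies $x\notin\Omega\cap B_{r_n+1}$ (else $x\in\Omega_n$), hence $|x|\ge r_n+1$, hence $\chi_n(x)=0$; thus $u_n=0$ a.e.\ on $\Omega\setminus\Omega_n$ and $u_n\in X_n$ by~\eqref{eq:Xn}.

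For the error estimate, note that $u-u_n=(1-\chi_n)u$ vanishes on $\Omega\cap B_{r_n}$ because $\chi_n\equiv1$ there, so $\|u-u_n\|_X=\|(1-\chi_n)u\|_{X(B_{r_n}^c)}$. Applying the same multiplier bound on $\Omega\cap B_{r_n}^c$ with $\eta:=1-\chi_n$, and using $\|1-\chi_n\|_{W^{1,\infty}}\le 1+C_1$, I obtain $\|(1-\chi_n)u\|_{X(B_{r_n}^c)}\le\sqrt3\,(1+C_1)\,\|u\|_{X(B_{r_n}^c)}$. Since $u_n\in X_n$, this yields $\inf_{u_n\in X_n}\|u-u_n\|_X\le C\|u\|_{X(B_{r_n}^c)}$ with $C:=\sqrt3\,(1+C_1)$, independent of $n$.

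The argument carries no real difficulty; the single point that must be respected — and the reason the statement is phrased with the $+1$ buffer — is that the cutoff's transition region must have width bounded below independently of $n$, so that $\|\nabla\chi_n\|_{L^\infty}$, and hence the final constant $C$, does not degenerate as $n\to\infty$.
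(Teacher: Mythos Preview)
Your proposal is correct and follows essentially the same approach as the paper: the paper also chooses $u_n(x):=\cofii(1+r_n-|x|)\,u(x)$ with the smooth cutoff $\cofii$ from~\eqref{eq:Smooth0to1}, which is precisely a radial cutoff equal to $1$ on $B_{r_n}$ and $0$ outside $B_{r_n+1}$, and then bounds $\|u-u_n\|_X=\|u-u_n\|_{X(\Omega\setminus B_{r_n})}$ by a fixed multiple of $\|u\|_{X(B_{r_n}^c)}$. Your write-up is in fact more explicit than the paper's (you spell out why $u_n\in X_n$ and invoke the $W^{1,\infty}$-multiplier bound from the proof of Lemma~\ref{lem:TisIsomorphism}), but the idea and the key uniformity observation about the fixed-width transition layer are identical.
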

\begin{proof}
We choose $u_n(x):=\cofii(1+r_n-|x|)u(x)\in X_n$ with $\cofii$ as in~\eqref{eq:Smooth0to1} and compute
\begin{align*}
\|u-u_n\|_X=\|u-u_n\|_{X(\Omega\setminus B_{r_n})} \leq \|u\|_{X(\Omega\setminus B_{r_n})}+C\|u\|_{X(A_{r_n,r_n+1})}
\end{align*}
with a constant $C>0$ independent of $u$ and $n$.
\end{proof}

Due to \eqref{eq:EstXNormU} $\|u\|_{X(B_{r_n}^c)}$ can be estimated
to decay exponentially for eigenfunctions $u$, i.e.\ $A(\spm)u=0$. For generalized
eigenfunctions (also called root functions) $\|u\|_{X(B_{r_n}^c)}$ can also be
estimated to decay exponentially due to Lemma~6.1 of~\cite{KimPasciak:09}.
Together with Theorem~\ref{thm:SpectralConvergenceHelmholtz} and Lemma~\ref{lem:BestApproximation} this yields convenient error bounds of the form
\begin{align*}
\|u-u_n\|_X \lesssim e^{-Cr_n}
\end{align*}
for some constant $C>0$ with layer width $r_n$.

For solutions $(\spm,u)$ to $A(\spm)u=0$ the quantity of interest is actually only
$(\spm,u|_{\Omega\cap B_{r_1^*}})$ where as $u|_{B_{r_1^*}^c}$ could be called an
auxiliary variable. It is indeed possible to improve the error estimate obtained
by Theorem~\ref{thm:SpectralConvergenceHelmholtz} and Lemma~\ref{lem:BestApproximation}
for the eigenspaces if the error is only measured in $\|\cdot\|_{X(\Omega\cap B_{r_1^*})}$.
A hand waving explanation is that $A_n(\cdot)$ differs from $A(\cdot)$ only by a
distortion at $B_{r_2^*}^c$ and as ``the error propagates'' towards $\Omega\cap B_{r_1^*}$
``the error decays''. This argumentation can be made rigorous by a comparison of
the Dirichlet-to-Neumann operators generated by the complex scaling in the
(un)truncated domains. For details see e.g.\ \cite[Section~4.3]{HallaNannen:18}.\\

Now consider for a fixed index $n\in\setN$ a subsequent approximation of
\eqref{eq:ResonanceProblemPMLGalerkin} by the following.
We consider a sequence of subspaces $\big(X_n^{h(m)}\big)_{m\in\setN}$, $X_n^{h(m)}\subset X_n$, $m\in\setN$,
so that the orthogonal projections $P_n^{h(m)}\colon X_n\to X_n^{h(m)}$ converge point-wise to
the identity in $X_n$ and eigenvalue problem
\begin{align}\label{eq:ResonanceProblemPMLGalerkinFEM}
\begin{split}
\text{find }(\spm,u_{h(m)})\in\setC\times X_n^{h(m)}\setminus\{0\}\text{ so that}
\quad a(\spm;u_{h(m)},u_{h(m)}')=0\\\text{for all }u_{h(m)}'\in X_n^{h(m)}.
\end{split}
\end{align}
We note that restricted to $X_n$ the norm $\|\cdot\|_X$ is equivalent to
$\|\cdot\|_{H^1(\Omega)}$ and hence $X_n=\{u\in H^1_0(\Omega)\colon u=0 \text{ in }\Omega\setminus\Omega_n\}$.
It holds further that $A_n(\spm)\in L(X_n)$ is already weakly coercive.
Hence the approximation of \eqref{eq:ResonanceProblemPMLGalerkin} by \eqref{eq:ResonanceProblemPMLGalerkinFEM}
can already be performed with common techniques \cite{BabuskaOsborn:91}.\\

The profile function $\tilde\alpha$ limits the regularity of solutions. However,
to achieve optimal approximations rates of solutions by general finite element
spaces smooth solutions are necessary. Yet, if $\tilde\alpha$ is piece-wise smooth
optimal rates can be restored if the meshes of the finite element spaces are aligned
to the jumps in the derivatives of $\tilde\alpha$.
If this is not possible, e.g.\ because the finite element code is limited to
polytopial meshes, it is desirable to chose a globally smooth profile function.
Of course for finite element spaces with fixed maximal polynomial degree one can
construct $\tilde\alpha$ with appropriate smoothness as piece-wise polynomial.
However, in this case it seems more natural to us to construct $\tilde\alpha
\in C^\infty(\setR^+)$ in the first place, e.g.\ as in \eqref{eq:AlphaInfty}.

\section{Simultaneaous approximation}\label{sec:appr_simul}

In the previous section we considered a sequence of bounded subdomains
$(\Omega_n)_{n\in\setN}$ as in Assumption~\ref{ass:OmegaN}, an approximation
of \eqref{eq:ResonanceProblemVariational} by \eqref{eq:ResonanceProblemPMLGalerkin}
and subsequent a sequence of subspaces $\big(X_n^{h(m)}\big)_{m\in\setN}$, $X_n^{h(m)}\subset X_n$
and an approximation of \eqref{eq:ResonanceProblemPMLGalerkin} by \eqref{eq:ResonanceProblemPMLGalerkinFEM}.
The two key ingredients which allowed a pretty simple analysis were the $T(\cdot)$-invariance
of $X_n$ and the weak coercivity of $A_n(\cdot)$. This way we avoided to discuss the
issue of the non-$T(\cdot)$-invariance of $X_n^{h(m)}$ and the construction of an
appropriate $T_n^{h(m)}(\cdot)$ operator function.
Though this kind of analysis yields only limited results: It is left open if a sequence of approximations with simultaneaous increasing domains and decreasing mesh-width' could lead to erroneous results (e.g.\ failure of convergence, spectral pollution). Nevertheless, the only work known to us (from the huge amount of articles on PML-approximations) which adressed this important issue so far is \cite{HohageNannen:15}.

Thus in this section we consider a direct approximation of \eqref{eq:ResonanceProblemVariational} through non-$T(\cdot)$-invariant subspaces of $X$, e.g.\ the diagonal sequence $\big(X_n^{h(n)}\big)_{n\in\setN}$.
To conduct our analysis we introduce an operator function $T_\epsilon(\cdot)$ which is a slight modification of~\eqref{eq:DefTHH} in Lemmata~\ref{lem:EtaEps} and~\ref{lem:Teps}.
This new operator function has some favorable properties and is so that $A(\cdot)$ is still
weakly $T_\epsilon(\cdot)$-coercive. We consider finite dimensional Galerkin spaces $X^{h(m)}\subset X$
which suffice two Assumptions~\ref{ass:LocalApproximation} and~\ref{ass:ConstantFunctions}. 
In Theorem~\ref{thm:Tepsh} we prove that under such assumptions we can construct appropriate
operator functions $T_\epsilon^{h(m)}(\cdot)\colon\setC\to L(X^{h(m)})$ which converge to $T_\epsilon(\cdot)$
in discrete norm at each $\spm\in\setC$, i.e.\ the Approximation $\big(A^{h(m)}(\cdot)\colon\Lambda_{d_0}\to L(X^{h(m)})\big)_{m\in\setN}$
is $T_\epsilon(\cdot)$-compatible. A key ingredient for the analysis is a variant of
the discrete commutator property of Bertoluzza~\cite{Bertoluzza:99}.
Finally in Theorem~\ref{thm:SpectralConvergenceDirect} we formulate our convergence results.

\begin{lemma}\label{lem:EtaEps}
Let $r_1\in\setR$ and $r_2\in\setR\cup\{+\infty\}$ with $r_1<r_2$.
Let $\eta\colon [r_1,r_2)\to\setC$ be continuous so that $\lim_{r\to r_2-}\eta(r)=:\eta(r_2)$
exists in $\setC$.
Then for each $\epsilon>0$ exist $\eta_\epsilon\colon [r_1,r_2)\to\setC$ and $\hat r_1, \hat r_2\in (r_1,r_2)$
so that
\begin{enumerate}
 \item $\|\eta-\eta_\epsilon\|_{L^\infty(r_1,r_2)}<\epsilon$,
 \item $\eta_\epsilon(r)=\eta(r_1)$ for $r \leq \hat r_1$,
 \item $\eta_\epsilon(r)=\eta(r_2)$ for $r \geq \hat r_2$,
 \item $\eta_\epsilon$ is infinitely many times differentiable.
\end{enumerate}
\end{lemma}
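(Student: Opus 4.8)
The statement is a mollification argument with two competing requirements: approximate $\eta$ uniformly by a smooth function, yet pin down the smooth function to be \emph{exactly} the constant $\eta(r_1)$ near $r_1$ and \emph{exactly} the constant $\eta(r_2)$ near $r_2$. The plan is to first replace $\eta$ by a function that is already constant near both ends and still uniformly close to $\eta$, and then mollify away from the ends where it is already locally constant so that the mollification does not disturb those constant pieces.

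First I would invoke uniform continuity of $\eta$ at the endpoints. Since $\eta$ is continuous on $[r_1,r_2)$ and $\lim_{r\to r_2-}\eta(r)=\eta(r_2)$ exists, there are points $r_1<s_1<s_2<r_2$ with $|\eta(r)-\eta(r_1)|<\epsilon/2$ for $r\in[r_1,s_1]$ and $|\eta(r)-\eta(r_2)|<\epsilon/2$ for $r\in[s_2,r_2)$; here one uses continuity at $r_1$ for the first and the existence of the limit at $r_2$ for the second. Define an intermediate continuous function $\tilde\eta$ that equals $\eta(r_1)$ on $[r_1,s_1]$, equals $\eta(r_2)$ on $[s_2,r_2)$, and equals $\eta$ on $[s_1',s_2']$ for some $s_1<s_1'<s_2'<s_2$, interpolated continuously (e.g.\ affinely) on the two transition intervals $[s_1,s_1']$ and $[s_2',s_2]$; since the interpolation stays within the convex hull of values each of which is within $\epsilon/2$ of the relevant endpoint value, one checks $\|\eta-\tilde\eta\|_{L^\infty(r_1,r_2)}<\epsilon/2$.

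Next I would mollify. Pick a standard mollifier $\phi_\delta$ (a smooth bump supported in $(-\delta,\delta)$, nonnegative, integral one) with $\delta$ small enough that $\delta<\min\{s_1'-s_1, s_2-s_2'\}$ and also small enough that the uniform-continuity modulus of $\tilde\eta$ on the compact interval $[s_1,s_2]$ is below $\epsilon/2$ at scale $\delta$. Set $\eta_\epsilon:=\tilde\eta*\phi_\delta$, extending $\tilde\eta$ by its endpoint constants just to form the convolution near the ends if needed. Then $\eta_\epsilon$ is $C^\infty$; on $[r_1,s_1]$ and on $[s_2,r_2)$ the convolution of a locally constant function with $\phi_\delta$ reproduces that constant because $\phi_\delta$ has mass one and support of width $<2\delta$ smaller than the flat region, so $\eta_\epsilon(r)=\eta(r_1)$ for $r\le \hat r_1:=s_1$ and $\eta_\epsilon(r)=\eta(r_2)$ for $r\ge \hat r_2:=s_2$; and $\|\tilde\eta-\eta_\epsilon\|_{L^\infty}\le$ the modulus of continuity bound $<\epsilon/2$. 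Combining the two estimates via the triangle inequality gives $\|\eta-\eta_\epsilon\|_{L^\infty(r_1,r_2)}<\epsilon$.

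The only point requiring a little care — and hence the main (minor) obstacle — is the bookkeeping at $r_2$ when $r_2=+\infty$: one must read "$\eta_\epsilon(r)=\eta(r_2)$ for $r\ge\hat r_2$" as the statement that $\eta_\epsilon$ is eventually the constant $\eta(+\infty)=\lim_{r\to\infty}\eta(r)$, and choose $s_2$ large rather than as an interior point of a bounded interval; the rest of the argument is identical. Everything else is the routine convolution calculus, so I would not spell those steps out in full.
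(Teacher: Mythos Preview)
Your proposal is correct and uses a variant of the paper's approach: the paper first picks a global smooth approximant $\hat\eta$ of $\eta$ and then blends it with the endpoint constants via the explicit cutoff $\chi_2$ from~\eqref{eq:Smooth0to1}, whereas you first flatten $\eta$ to a continuous $\tilde\eta$ that is already constant near both ends and then mollify. Both routes are standard and of comparable length; the paper's version avoids the convolution bookkeeping, yours avoids invoking a separate global smooth approximation.

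One small slip in your write-up: with the ordering $s_1<s_1'$, the transition interval $[s_1,s_1']$ lies \emph{outside} the set on which you secured $|\eta-\eta(r_1)|<\epsilon/2$, so the convex-hull justification does not apply as stated (and even inside that set you would only get $|\eta-\tilde\eta|<\epsilon$, not $\epsilon/2$). The fix is routine: choose a slightly larger interval on which $|\eta-\eta(r_1)|<\epsilon/4$ and place both $s_1$ and $s_1'$ inside it, and likewise at the right end. Also, after convolution with $\phi_\delta$ the function is constant only up to $s_1-\delta$, so take $\hat r_1:=s_1-\delta$ rather than $s_1$. With these adjustments the argument goes through.
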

\begin{proof}
Since $\eta$ is continuous and $\lim_{r\to r_2-}\eta(r)$ exists we can choose
$\check r_1, \check r_2\in (r_1,r_2)$ so that $\|\eta-\eta(r_1)\|_{L^\infty(r_1,\check r_1)}<\epsilon/2$
and $\|\eta-\eta(r_2)\|_{L^\infty(\check r_2,r_2)}<\epsilon/2$.
Since $C^\infty(r_1,r_2)$ is dense in $L^\infty(r_1,r_2)$ we can choose $\hat\eta\in C^\infty(r_1,r_2)$
with $\|\eta-\hat\eta\|_{L^\infty(r_1,r_2)}<\epsilon/2$. Let $\hat r_1\in (r_1,\check r_1)$,
$\hat r_2\in (\check r_2, r_2)$ and
\begin{align*}
\eta_\epsilon:=
\left\{\begin{array}{ll}
\eta(r_1), & r\leq \hat r_1,\\
\big(1-\cofii(\frac{r-\hat r_1}{\check r_1-\hat r_1})\big)\eta(r_1) + \cofii(\frac{r-\hat r_1}{\check r_1-\hat r_1})\hat\eta(r), & \hat r_1<r\leq \check r_1,\\
\hat\eta(r), & \check r_1 <r< \check r_2,\\
\big(1-\cofii(\frac{r-\check r_1}{\hat r_2-\check r_2})\big)\hat\eta(r) + \cofii(\frac{r-\check r_1}{\hat r_2-\check r_2})\eta(r_2), & \hat r_2<r\leq \check r_2,\\
\eta(r_2), & r\geq \hat r_2,
\end{array}\right.
\end{align*}
with $\cofii$ as in~\eqref{eq:Smooth0to1}. From the triangle inequality and $\chi_2(r)\in [0,1]$
for all $r\in\setR$ it follows $\|\eta-\eta_\epsilon\|_{L^\infty(r_1,r_2)}<\epsilon$.
By construction $\eta_\epsilon$ suffices also the last three criteria.
\end{proof}

\begin{lemma}\label{lem:Teps}
Let Assumptions~\ref{ass:TildeAlpha} and \ref{ass:TildeAlpha2} hold.
Let $X$ and $a(\cdot;\cdot,\cdot)$ be as in~\eqref{eq:X-sprX-a} and
$A(\cdot)$ be as in~\eqref{eq:Ak}. For $\epsilon>0$ and $\spm\in\setC\setminus\{0\}$ let
\begin{align}\label{eq:DefTeps}
T_\epsilon(\spm)u:=\eta_\epsilon u \qquad\text{with} \qquad
\eta:=\left\{\begin{array}{ll}
\ol{\frac{|\hat d|}{\hat d}} &\text{for } \arg(-\spm^2d_0^2)\in [-\pi,0), \vspace{3mm}\\
\ol{\frac{\hat d}{\tilde d^2}\frac{|\tilde d|^2}{|\hat d|}} &\text{for }\arg (-\spm^2d_0^2)\in [0,\pi)
\end{array}\right.
\end{align}
and $\eta_\epsilon|_{(r_1^*,+\infty)}$ as in Lemma~\ref{lem:EtaEps} with $r_1=r_1^*, r_2=+\infty$
and $\eta_\epsilon|_{[0,r_1^*]}:=\eta_\epsilon(r_1^*)$.

For each $\omega\in\setC\setminus\{0\}$ there exists $\epsilon_0(\omega)>0$ so that for each
$\epsilon\leq\epsilon_0(\omega)$, $T_\epsilon(\spm) \in L(X)$ is bijective and $A(\omega)\colon \Lambda_{d_0}\to L(X)$
is weakly $T_\epsilon(\omega)$-coercive.
\end{lemma}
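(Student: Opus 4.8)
The plan is to show that $T_\epsilon(\spm)$ is a small perturbation of the operator $T(\spm)$ from Lemma~\ref{lem:TisIsomorphism}, so that bijectivity is inherited for $\epsilon$ small, and then to revisit the proof of Theorem~\ref{thm:AwTc} and check that the decomposition $a(\spm;\cdot,T_\epsilon(\spm)\cdot)=a_1+a_2$ into a coercive part and a compact part survives the smoothing, again provided $\epsilon$ is small. First I would fix $\omega\in\setC\setminus\{0\}$ and record which branch of~\eqref{eq:DefTeps} applies (the dichotomy on $\arg(-\omega^2 d_0^2)$ is exactly the one in~\eqref{eq:DefTHH}); note that in a neighborhood of $\omega$ the branch is locally constant, which is all we need since the statement is for a single spectral parameter $\omega$. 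Write $\eta$ for the corresponding symbol of $T(\spm)$ and $\eta_\epsilon$ for the smoothed symbol supplied by Lemma~\ref{lem:EtaEps}, applied on $(r_1^*,+\infty)$ with $r_1=r_1^*$, $r_2=+\infty$, extended by the constant $\eta_\epsilon(r_1^*)$ on $[0,r_1^*]$.

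For the bijectivity part I would argue exactly as in Lemma~\ref{lem:TisIsomorphism}: multiplication by any $\zeta\in W^{1,\infty}(\Omega)$ is bounded $X\to X$ with norm $\le\sqrt{3}\|\zeta\|_{W^{1,\infty}(\Omega)}$, and if in addition $|\zeta|$ is bounded away from $0$ then $1/\zeta\in W^{1,\infty}(\Omega)$, so multiplication by $\zeta$ is bijective on $X$. Here $\eta_\epsilon$ is infinitely differentiable by Lemma~\ref{lem:EtaEps}, constant (hence in $W^{1,\infty}$) outside a compact radial interval, so $\eta_\epsilon\in W^{1,\infty}(\Omega)$; and since $|\eta|\equiv1$ and $\|\eta-\eta_\epsilon\|_{L^\infty}<\epsilon$, for $\epsilon<1/2$ we have $|\eta_\epsilon|\ge1/2$ everywhere. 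Thus $T_\epsilon(\spm)\in L(X)$ is bijective for every $\spm$ once $\epsilon<1/2$; this threshold does not yet depend on $\omega$.

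The weak $T_\epsilon(\omega)$-coercivity is the main obstacle and I would handle it by perturbing the proof of Theorem~\ref{thm:AwTc}. Expanding $a(\omega;u,\eta_\epsilon u')$ by the product and chain rules gives again a principal-part form plus an $L^2$-term plus a first-order term carrying $\partial_r\overline{\eta_\epsilon}$; the latter two feed into the compact part $a_2$. The point is that the compactness argument only used that the symbols multiplying the $L^2$- and first-order terms tend to $0$ as $r\to+\infty$: since $\eta_\epsilon$ agrees with the constant $\eta(+\infty)$ for $r\ge\hat r_2$, the term $\partial_r\overline{\eta_\epsilon}$ vanishes identically for large $r$, and the $L^2$-symbol involving $\eta_\epsilon$ still converges to $0$; it remains bounded and locally $L^\infty$, so Lemma~\ref{lem:WeigthedL2Compactness} together with the compact Sobolev embedding $H^1(D)\to L^2(D)$ still yields compactness of the associated operator $A_{2,\epsilon}$. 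For the coercive part $a_{1,\epsilon}$, in Theorem~\ref{thm:AwTc} the coefficients of the principal part and the sign of $-\omega^2 d_0^2$ all lay in a common closed salient sector $[\tau_1,0]$ (resp.\ $[0,\tau_1]$) with $\tau_1\in(-\pi,0)$ (resp.\ $(0,\pi)$); replacing $\eta$ by $\eta_\epsilon$ perturbs those coefficients uniformly by $O(\epsilon)$, and since $\tau_1$ is a fixed interior angle we may choose $\epsilon_0(\omega)>0$ small enough (depending on the gap between $\tau_1$ and $\pm\pi/2$, hence on $\omega$ through $\arg(-\omega^2d_0^2)$) that the perturbed coefficients still lie in a salient sector of half-angle $<\pi/2$; the rotated test of Theorem~\ref{thm:AwTc} then gives $\Re\bigl(i e^{-i(\pi+\tau_1)/2}a_{1,\epsilon}(u,u)\bigr)\ge c\min\{1,|\omega^2|\}\|u\|_X^2$ for some $c>0$. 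Hence $T_\epsilon(\omega)^*A(\omega)$ is coercive up to the compact operator $A_{2,\epsilon}$, i.e.\ $A(\cdot)$ is weakly $T_\epsilon(\omega)$-coercive, for all $\epsilon\le\epsilon_0(\omega):=\min\{1/2,\epsilon_1(\omega)\}$ with $\epsilon_1(\omega)$ the sectorial threshold above. I expect the only delicate bookkeeping to be verifying that the first-order cross term really does land in the compact part uniformly — but this is immediate from the compact-support-at-infinity of $\partial_r\eta_\epsilon$.
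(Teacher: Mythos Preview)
Your proposal is correct and follows exactly the route the paper takes: the paper's own proof is just the two-line sketch ``bijectivity as in Lemma~\ref{lem:TisIsomorphism}, weak $T_\epsilon$-coercivity as in Theorem~\ref{thm:AwTc} for small enough $\epsilon$'', and you have spelled out precisely those two reductions. One minor wording slip: the relevant gap for the sectorial argument is between $|\tau_1|$ and $\pi$ (equivalently between $\tau_1$ and $-\pi$), not between $\tau_1$ and $\pm\pi/2$; but your use of it is correct.
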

\begin{proof}
$T_\epsilon(\spm) \in L(X)$ and its bijectivity can be proven for a sufficiently small $\epsilon$
as in the proof of Lemma~\ref{lem:TisIsomorphism}. Similarly the weak
$T_\epsilon(\omega)$-coercivity of $A(\omega)$ can be proven for a sufficiently small
$\epsilon$ as in the proof of Theorem~\ref{thm:AwTc}.
\end{proof}

\begin{assumption}\label{ass:LocalApproximation}
There exists a sequence $\big(h(n)\big)_{n\in\setN}\in (\setR^+)^\setN$ with $\lim_{n\in\setN} h(n)=0$.
There exist bounded linear projection operators $\Pi_{h(n)}\colon X\to X^{h(n)}, n\in\setN$
that act locally in the following sense: there exist constants $C_1, R^*>1$ so that for $n\in\setN$, $s \in \{1,2\}$,
$x_0\in\Omega$, if $B_{R^*h(n)}(x_0)\subset \Omega$, $u\in X$ and $u|_{B_{R^*h(n)}(x_0)} \in H^s(B_{R^*h(n)}(x_0))$, then
\begin{align}
\|u-\Pi_{h(n)} u\|_{H^1(B_{h(n)}(x_0))} \leq C_1 h(n)^{s-1} \|u\|_{H^s(B_{R^*h(n)}(x_0))}.
\end{align}
\end{assumption}

\begin{assumption}\label{ass:ConstantFunctions}
For any $D\subset\Omega$ which is compact in $\Omega$ exists $n_0>0$
so that for each $n\in\setN, n>n_0$ there exists $u_{D,n}\in X^{h(n)}$ with $u_{D,n}|_D=1$.
\end{assumption}

\begin{theorem}\label{thm:Tepsh}
Let Assumptions~\ref{ass:TildeAlpha} and \ref{ass:TildeAlpha2} hold.
Let $X$ be as in~\eqref{eq:X-sprX-a}, $\big(X^{h(n)}\big)_{n\in\setN}$ be sequence of
finite dimensional subspaces $X^{h(n)}\subset X$ so that the orthogonal projections
from $X$ onto $X^{h(n)}$ converge point-wise to the identity in $X$ and so that
Assumptions~\ref{ass:LocalApproximation} and~\ref{ass:ConstantFunctions} hold.
Let $\epsilon_0(\omega)$ be as in Lemma~\ref{lem:Teps}, $T_{\epsilon_0}(\omega):=T_{\epsilon_0(\omega)}(\omega)$
be as in~\eqref{eq:DefTeps} and $\|\cdot\|_n$ be as in~\eqref{eq:discretenorm}.
For $n\in\setN$ let $\Pi_{h(n)}$ be as in Assumptions~\ref{ass:LocalApproximation} and
\begin{align}
T_{\epsilon_0}^{h(n)}(\spm):=\Pi_{h(n)}T_{\epsilon_0}(\spm)|_{X^{h(n)}}
\end{align}
for $\spm\in\setC\setminus\{0\}$. Then $T_{\epsilon_0}^{h(n)}(\spm) \in L(X^{h(n)})$ is Fredholm with index zero and
\begin{align}
\lim_{n\in\setN} \|T_{\epsilon_0}(\spm)-T_{\epsilon_0}^{h(n)}(\spm)\|_n=0
\end{align}
for all $\spm\in\setC\setminus\{0\}$.
\end{theorem}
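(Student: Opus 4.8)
The plan is to show two things: first that $T_{\epsilon_0}^{h(n)}(\spm)$ is Fredholm with index zero, and second that $\|T_{\epsilon_0}(\spm)-T_{\epsilon_0}^{h(n)}(\spm)\|_n\to 0$. Since $X^{h(n)}$ is finite-dimensional, any linear operator on it is automatically Fredholm; index zero is immediate because $L(X^{h(n)})$ consists of operators between spaces of equal (finite) dimension, so the kernel and cokernel dimensions coincide. Hence the Fredholm claim is essentially free and the real content is the discrete-norm convergence.

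For the convergence estimate, fix $\spm\in\setC\setminus\{0\}$ and write $\eta:=\eta_{\epsilon_0(\spm)}$ for the (now globally) smooth symbol produced by Lemma~\ref{lem:Teps}; note $|\eta|=1$ and, by construction via Lemma~\ref{lem:EtaEps}, $\eta$ is constant outside a bounded annulus and $\eta\in W^{1,\infty}(\Omega)$ with $\nabla\eta$ supported in a fixed compact set $D_0\Subset\Omega$. For $u_n\in X^{h(n)}$ we have
\begin{align*}
(T_{\epsilon_0}(\spm)-T_{\epsilon_0}^{h(n)}(\spm))u_n = \eta u_n - \Pi_{h(n)}(\eta u_n),
\end{align*}
so everything reduces to estimating $\|\eta u_n - \Pi_{h(n)}(\eta u_n)\|_X$ in terms of $\|u_n\|_X$, uniformly over $u_n\in X^{h(n)}$. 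The idea is exactly the discrete commutator philosophy of Bertoluzza~\cite{Bertoluzza:99}: away from the support of $\nabla\eta$ the function $\eta u_n$ is, up to the locally constant factor $\eta$, again a discrete function (here one uses Assumption~\ref{ass:ConstantFunctions} to multiply a discrete function by a constant and stay in $X^{h(n)}$), on which the projection error is small; near $\supp\nabla\eta$ one has only $H^1$-regularity but this is a region of \emph{fixed} size, so the local approximation estimate of Assumption~\ref{ass:LocalApproximation} with $s=1$ gives an $O(1)$ bound there which, crucially, can be made to go to zero by the standard trick of splitting into a coarse-scale and a fine-scale piece.

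More concretely I would cover $\Omega$ by balls of radius $h(n)$, split the sum of local errors over those balls into three groups: (i) balls on which $\eta$ is (exactly) constant and $u_n$ has $H^2$-regularity in the enlarged ball — but discrete functions need not be $H^2$, so instead one argues that on such a ball $\eta u_n$ agrees with $\eta(x_0)u_n$ plus a term that is smooth, and uses Assumption~\ref{ass:ConstantFunctions} to see $\eta(x_0)u_n$ is (locally) in $X^{h(n)}$ and hence has zero projection error there after subtracting $\Pi_{h(n)}$ applied to a function agreeing with it locally; (ii) balls meeting $\supp\nabla\eta$, of which there are $O(h(n)^{-3})$ but each contributing $O(h(n)^{3})$ after using $s=1$ and $\|\eta u_n\|_{H^1}\lesssim\|u_n\|_X$ locally, which must be organized so the total tends to zero — this is the delicate bookkeeping; and (iii) balls not contained in $\Omega$, near $\partial\Omega$, where again $\eta$ is constant (since $\partial\Omega^c\subset B_{r_1^*}$ where $\eta$ is already constant). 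I expect the main obstacle to be step (ii): making the commutator $\eta u_n-\Pi_{h(n)}(\eta u_n)$ small on the fixed-size transition region, where $u_n$ carries no extra regularity. The resolution is the familiar one — write $\eta=\eta^{\mathrm{coarse}}+\eta^{\mathrm{fine}}$ where $\eta^{\mathrm{coarse}}$ is a mesh-scale-independent smoothing of $\eta$ so that $\eta^{\mathrm{coarse}}u_n$ enjoys the $s=2$ estimate on the transition region (contributing a factor $h(n)$ via the $H^2$-seminorm of $\eta^{\mathrm{coarse}}$, which is $O(1)$) while $\|\eta^{\mathrm{fine}}\|_{W^{1,\infty}}$ can be made arbitrarily small by choosing the smoothing scale, and then let both scales go to zero with $n$. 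Piecing the three contributions together gives the claimed $\|T_{\epsilon_0}(\spm)-T_{\epsilon_0}^{h(n)}(\spm)\|_n\to 0$.
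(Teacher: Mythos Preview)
Your Fredholm claim and the reduction of the error to the fixed annulus where $\nabla\eta_{\epsilon_0}$ is supported are both fine and match the paper. The gap is in your treatment of step~(ii). Your proposed fix---splitting $\eta=\eta^{\mathrm{coarse}}+\eta^{\mathrm{fine}}$ and invoking the $s=2$ estimate for $\eta^{\mathrm{coarse}}u_n$---does not work: the symbol $\eta_{\epsilon_0}$ is \emph{already} $C^\infty$ by Lemma~\ref{lem:EtaEps}, so further smoothing buys nothing, and the obstruction to applying Assumption~\ref{ass:LocalApproximation} with $s=2$ is that $u_n$ itself is in general only in $H^1$ on the enlarged ball (finite element functions are not $H^2$ across element interfaces). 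No decomposition of $\eta$ can repair this, because $\eta^{\mathrm{coarse}}u_n\in H^2$ would still require $u_n\in H^2$.

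The correct commutator argument, which is what the paper does following~\cite{Bertoluzza:99}, splits $u_n$ rather than $\eta$. On each ball $B_{R^*h(n)}(x)$ one uses Assumption~\ref{ass:ConstantFunctions} (this is its real role, not the one you assigned it in step~(i)) to pick $u_{x,n}\in X^{h(n)}$ that is \emph{constant} on $B_{R^*h(n)}(x)$ and approximates $u_n$ there to first order. Then $\eta_{\epsilon_0}u_{x,n}$ \emph{is} in $H^2$ on the ball (smooth function times a constant), so the $s=2$ estimate gives an $O(h(n))$ contribution. For the remainder one writes
\[
(1-\Pi_{h(n)})\eta_{\epsilon_0}(u_n-u_{x,n})
=(1-\Pi_{h(n)})\big(\eta_{\epsilon_0}-\eta_{\epsilon_0}(x)\big)(u_n-u_{x,n}),
\]
using that $\eta_{\epsilon_0}(x)(u_n-u_{x,n})\in X^{h(n)}$ is killed by $(1-\Pi_{h(n)})$; then $|\eta_{\epsilon_0}-\eta_{\epsilon_0}(x)|=O(h(n))$ and $\|u_n-u_{x,n}\|_{H^1}=O(h(n))\|u_n\|_{H^1}$ produce the missing factor of $h(n)$ via the $s=1$ estimate. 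Summing over the finite-overlap cover of the annulus yields $\|T_{\epsilon_0}(\spm)-T_{\epsilon_0}^{h(n)}(\spm)\|_n=O(h(n))$.
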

\begin{proof}
Let $\spm\in\setC\setminus\{0\}$. It is straightforward to see
$T_{\epsilon_0}^{h(n)}(\spm) \in L(X^{h(n)})$. Since $X^{h(n)}$ is finite dimensional,
$T_{\epsilon_0}^{h(n)}(\spm)$ is Fredholm with index zero. Further, we note that if
$n\in\setN$, $x_0\in\Omega$, $B_{R^*h(n)}(x_0)\subset\Omega$ and $u,\hat u\in X$ with
$u|_{B_{R^*h(n)}(x_0)}=\hat u|_{B_{R^*h(n)}(x_0)}$, then also
$(T_{\epsilon_0}^{h(n)}(\spm) u)|_{B_{h(n)}(x_0)}=(T_{\epsilon_0}^{h(n)}(\spm) \hat u)|_{B_{h(n)}(x_0)}$.
Indeed from Assumption~\ref{ass:LocalApproximation} follows
\begin{align*}
\|\Pi_{h(n)}(\eta_{\epsilon_0}(u-\hat u))\|_{H^1(B_{h(n)}(x_0))}
&=\|\eta_{\epsilon_0}(u-\hat u)-\Pi_h(\eta_{\epsilon_0}(u-\hat u))\|_{H^1(B_{h(n)}(x_0))}\\
&\leq C_1 \|\eta_{\epsilon_0}(u-\hat u)\|_{H^1(B_{C_1h(n)}(x_0))}
=0.
\end{align*}
So let $\hat r_1, \hat r_2$ be as in Lemma~\ref{lem:EtaEps}.
Let $r_2^*>\hat r_2$, $h_0>0$ with $h_0<\min\{\hat r_1-r^*_1, r_2^*-\hat r_2\}/C_1$
and $n_0>0$ be so that $h(n)<h_0$ for all $n>n_0$.
Let $n>n_0$ and $u_n\in X^{h(n)}$.
Since $\Pi_{h(n)}$ is linear and a projection it follows
\begin{align*}
(T_{\epsilon_0}^{h(n)}(\spm) u_n)|_{\Omega\cap B_{r^*_1}}
=\big(\Pi_{h(n)}(\eta_{\epsilon_0} u_n)\big)|_{\Omega\cap B_{r^*_1}}
&=\big(\Pi_{h(n)}(\eta_{\epsilon_0}(r_1^*) u_n)\big)|_{\Omega\cap B_{r^*_1}}\\
&=(\eta_{\epsilon_0}(r_1^*)\Pi_{h(n)} u_n)|_{\Omega\cap B_{r^*_1}}\\
&=(\eta_{\epsilon_0}(r_1^*) u_n)|_{\Omega\cap B_{r^*_1}}\\
&=(\eta_{\epsilon_0} u_n)|_{\Omega\cap B_{r^*_1}}
=(T_{\epsilon_0}(\spm) u_n)|_{\Omega\cap B_{r^*_1}}.
\end{align*}
Likewise $(T_{\epsilon_0}^{h(n)}(\spm) u_n)|_{\Omega\cap B_{r^*_2}^c}
=(T_{\epsilon_0}(\spm) u_n)|_{\Omega\cap B_{r^*_2}^c}$.
Hence
\small
\begin{align*}
&\|(T_{\epsilon_0}(\spm)-T_{\epsilon_0}^{h(n)}(\spm))u_n\|_{X}^2\\
&=
\langle (|\tilde d^2d^{-1}|\Px+|d|(\I-\Px))\nabla (T_{\epsilon_0}(\spm)-T_{\epsilon_0}^{h(n)}(\spm))u_n,
\nabla (T_{\epsilon_0}(\spm)-T_{\epsilon_0}^{h(n)}(\spm))u_n\rangle_{L^2(\Omega)}\\
&+\langle |\tilde d^2d|(T_{\epsilon_0}(\spm)-T_{\epsilon_0}^{h(n)}(\spm))u_n,(T_{\epsilon_0}(\spm)-T_{\epsilon_0}^{h(n)}(\spm))u_n \rangle_{L^2(\Omega)}\\
&=
\langle (|\tilde d^2d^{-1}|\Px+|d|(\I-\Px))\nabla (T_{\epsilon_0}(\spm)-T_{\epsilon_0}^{h(n)}(\spm))u_n,
\nabla (T_{\epsilon_0}(\spm)-T_{\epsilon_0}^{h(n)}(\spm))u_n\rangle_{L^2(A_{r_1^*,r_2^*})}\\
&+\langle |\tilde d^2d|(T_{\epsilon_0}(\spm)-T_{\epsilon_0}^{h(n)}(\spm))u_n,(T_{\epsilon_0}(\spm)-T_{\epsilon_0}^{h(n)}(\spm))u_n \rangle_{L^2(A_{r_1^*,r_2^*})}\\
&\leq C^2 \|(T_{\epsilon_0}(\spm)-T_{\epsilon_0}^{h(n)}(\spm))u_n\|_{H^1(A_{r^*_1,r^*_2})}^2
\end{align*}
\normalsize
with $C^2:=\sup_{x\in A_{r^*_1,r^*_2}} \max \{|\tilde d^2d^{-1}|,
|d|, |\tilde d^2d|\}<\infty$.
Now we are in the position to apply the analysis of Bertoluzza~\cite{Bertoluzza:99}.
Although we cannot apply~\cite[Theorem~2.1]{Bertoluzza:99} directly since neither
has $\eta_{\epsilon_0}$ compact support in $\Omega$ nor is the constant function included
in $X^{h(n)}$ (due to the incorporated homogeneous Dirichlet boundary condition).
Nevertheless, we can repeat the proof of~\cite[Theorem~2.1]{Bertoluzza:99} line
by line as follows.\newline

Let $h_0>0$ be so that $A_{r^*_1-R^*h_0,r^*_2+R^*h_0} \subset \Omega$
(with $R^*$ as in Assumption~\ref{ass:LocalApproximation}) and let $n_0>0$
be so that $h(n)<h_0$ for all $n>n_0$.
For each $n\in\setN$, $n>n_0$ we consider a collection of balls $\{B_{h(n)}(x), x\in Z\}$
with $Z\subset A_{r^*_1,r^*_2}$ so that $A_{r^*_1,r^*_2} \subset \bigcup_{x\in Z} B_{h(n)}(x)$
and so that any point $y\in \Omega$ belongs to at most $m\in\setN$ (with $m$ independent of $n\in\setN$, $n>n_0$)
balls of the collection $\{B_{R^*h(n)}(x), x\in Z\}$.
This implies the existence of a constant $\tilde C_1>0$ so that
\begin{align*}
\sum_{x\in Z} \|u\|_{H^s(B_{h(n)}(x))}^2 \leq \tilde C_1 \|u\|_{H^s(\bigcup_{x\in Z}B_{h(n)}(x))}^2
\end{align*}
for $s\in \{0,1,2\}$ and all $u\in H^s(\bigcup_{x\in Z}B_{h(n)}(x))$, $n\in\setN$, $n>n_0$.
Hence for $u_n\in X^{h(n)}$ we estimate
\begin{align*}
\|(T_{\epsilon_0}(\spm)-T_{\epsilon_0}^{h(n)}(\spm))u_n\|_{H^1(A_{r^*_1,r^*_2})}^2
&= \|(1-\Pi_{h(n)})\eta_{\epsilon_0}u_n\|_{H^1(A_{r^*_1,r^*_2})}^2\\
&\leq \sum_{x\in Z} \|(1-\Pi_{h(n)})\eta_{\epsilon_0}u_n\|_{H^1(B_{h(n)}(x))}^2.
\end{align*}
For each $x\in Z$ Assumption~\ref{ass:ConstantFunctions} allows us to
appropriately choose $u_{x,n}\in X^{h(n)}$ so that $u_{x,n}|_{B_{R^*h(n)}(x)}$ is
constant,
\begin{align*}
\|u_{x,n}\|_{L^2(B_{R^*h(n)}(x))} \leq \|u_n\|_{L^2(B_{R^*h(n)}(x))}
\end{align*}
and
\begin{align*}
\|u_n-u_{x,n}\|_{H^1(B_{R^*h(n)}(x))} \leq \tilde C_2 R^* h(n) \|u_n\|_{H^1(B_{R^*h(n)}(x))}
\end{align*}
with a constant $\tilde C_2>0$ independent of $u_n\in X^{h(n)}$ and $n\in\setN$, $n>n_0$.
Thus we estimate further
\begin{align*}
\|(1-\Pi_{h(n)})\eta_{\epsilon_0}u_n\|_{H^1(B_{h(n)}(x))}
&\leq \|(1-\Pi_{h(n)})\eta_{\epsilon_0}(u_n-u_{x,n})\|_{H^1(B_{h(n)}(x))}\\
&+ \|(1-\Pi_{h(n)})\eta_{\epsilon_0}u_{x,n}\|_{H^1(B_{h(n)}(x))}.
\end{align*}
Since $u_{x,n}|_{B_{R^*h(n)}(x)}$ is constant it follows with Assumption~\ref{ass:LocalApproximation}
\begin{align*}
\|(1-\Pi_{h(n)})\eta_{\epsilon_0}u_{x,n}\|_{H^1(B_{h(n)}(x))}
&\leq C_1 h(n) \|\eta_{\epsilon_0}u_{x,n}\|_{H^2(B_{R^*h(n)}(x))} \\
&\leq C_1 h(n) \|\eta_{\epsilon_0}\|_{W^{2,\infty}(B_{R^*h(n)}(x))}
\|u_{x,n}\|_{L^2(B_{R^*h(n)}(x))}.
\end{align*}
On the other hand, since $(u_n-u_{x,n})\in X^{h(n)}$ and $\Pi_{h(n)}$ is a projection
onto $X^{h(n)}$ it follows that $(1-\Pi_{h(n)})\eta_{\epsilon_0}(x)(u_n-u_{x,n})=0$.
Together with Assumption~\ref{ass:LocalApproximation} we estimate
\begin{align*}
\|(1-\Pi_{h(n)})\eta_{\epsilon_0}(u_n-u_{x,n})\|_{H^1(B_{h(n)}(x))}
&= \|(1-\Pi_{h(n)})(\eta_{\epsilon_0}-\eta_{\epsilon_0}(x))(u_n-u_{x,n})\|_{H^1(B_{h(n)}(x))}\\
&\leq C_1 \|(\eta_{\epsilon_0}-\eta_{\epsilon_0}(x))(u_n-u_{x,n})\|_{H^1(B_{R^*h(n)}(x))}.
\end{align*}
We compute
\begin{align*}
\|(\eta_{\epsilon_0}-\eta_{\epsilon_0}(x))(u_n-u_{x,n})\|_{H^1(B_{R^*h(n)}(x))}^2
&\leq \|(\eta_{\epsilon_0}-\eta_{\epsilon_0}(x))(u_n-u_{x,n})\|_{L^2(B_{R^*h(n)}(x))}^2\\
&+ 2\|(\eta_{\epsilon_0}-\eta_{\epsilon_0}(x))\nabla u_n)\|_{L^2(B_{R^*h(n)}(x))}^2\\
&+ 2\|(\nabla \eta_{\epsilon_0})(u_n-u_{x,n})\|_{L^2(B_{R^*h(n)}(x))}^2
\end{align*}
and estimate
\begin{align*}
\|(\eta_{\epsilon_0}-\eta_{\epsilon_0}(x))(u_n-u_{x,n})\|_{L^2(B_{R^*h(n)}(x))}
&\leq R^* h(n) \|\eta_{\epsilon_0}\|_{W^{1,\infty}(\Omega)} \|u_n\|_{L^2(B_{R^*h(n)}(x))},\\
\|(\eta_{\epsilon_0}-\eta_{\epsilon_0}(x))\nabla u_n\|_{L^2(B_{R^*h(n)}(x))}
&\leq R^* h(n) \|\eta_{\epsilon_0}\|_{W^{1,\infty}(\Omega)} \|u_n\|_{H^1(B_{R^*h(n)}(x))},\\
\|(\nabla \eta_{\epsilon_0})(u_n-u_{x,n})\|_{L^2(B_{R^*h(n)}(x))}
&\leq \tilde C_2 R^* h(n) \|\eta_{\epsilon_0}\|_{W^{1,\infty}(\Omega)} \|u_n\|_{H^1(B_{R^*h(n)}(x))}.
\end{align*}
Altogether we obtain
\begin{align*}
\|(T_{\epsilon_0}(\spm)-T_{\epsilon_0}^{h(n)}(\spm))u_n\|_{H^1(A_{r^*_1,r^*_2})}
\leq \tilde C_3 h(n) \|u_n\|_{H^1(A_{r^*_1-R^*h_0,r^*_2+R^*h_0})}
\end{align*}
with a constant $\tilde C_3>0$ independent of $n\in\setN$, $n>n_0$, $u_n\in X^{h(n)}$.
It remains to note
\begin{align*}
\|u_n\|_{H^1(A_{r^*_1-R^*h_0,r^*_2+R^*h_0})} \leq \tilde C_4 \|u_n\|_X
\end{align*}
for a constant $\tilde C_4>0$ independent of $n\in\setN$, $n>n_0$, $u_n\in X^{h(n)}$.
\end{proof}

\begin{theorem}[Spectral convergence]\label{thm:SpectralConvergenceDirect}
Let Assumptions~\ref{ass:TildeAlpha} and \ref{ass:TildeAlpha2} hold.
Let $X$ and $a(\cdot;\cdot,\cdot)$ be as in~\eqref{eq:X-sprX-a}, $A(\cdot)$ be as in~\eqref{eq:Ak}
and $\Lambda_{d_0}^\pm$ be as in~\eqref{eq:Lambdad0pm}.
Let $\big(X^{h(n)}\big)_{n\in\setN}$ be sequence of finite dimensional subspaces
$X^{h(n)}\subset X$ so that the orthogonal projections from $X$ onto $X^{h(n)}$
converge point-wise to the identity in $X$ and so that Assumptions~\ref{ass:LocalApproximation}
and~\ref{ass:ConstantFunctions} hold. Let $A^{h(n)}(\cdot)$ be defined by~\eqref{eq:Akn}
and $T_{\epsilon_0}(\cdot)$ be as in Theorem~\ref{thm:Tepsh}.

Then $A(\cdot)\colon \Lambda_{d_0}^\pm\to L(X)$ is a weakly $T_{\epsilon_0}(\cdot)$-coercive
holomorphic Fredholm operator function with non-empty resolvent set $\rho\big(A(\cdot)\big)$,
$A^{h(n)}(\cdot)\colon \Lambda_{d_0}^\pm\to L(X^{h(n)})$ is a $T_{\epsilon_0}(\cdot)$-compatible approximation and hence the convergence results \ref{item:SP-a})-\ref{item:SP-g}) from Subsection~\ref{subsec:Tframework} hold.
\end{theorem}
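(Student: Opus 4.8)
The plan is to obtain the statement by assembling the facts already proven and then quoting the abstract spectral convergence result \cite[Corollary~2.8]{Halla:19Tcomp}. Since $\Lambda_{d_0}^\pm$ stands for either of the two open connected half-planes $\Lambda_{d_0}^+$, $\Lambda_{d_0}^-$ (whose union is $\Lambda_{d_0}$), I would run the argument separately on each of them. First I would observe that, by~\eqref{eq:aD}, $A(\spm)$ is a polynomial in $\spm$, hence $A(\cdot)\colon\Lambda_{d_0}^\pm\to L(X)$ is holomorphic. Next, because $\Lambda_{d_0}^\pm\subset\Lambda_{d_0}\subset\setC\setminus\{0\}$, Lemma~\ref{lem:Teps}, applied with the $\spm$-dependent choice $\epsilon=\epsilon_0(\spm)$ (so that $T_{\epsilon_0}(\spm):=T_{\epsilon_0(\spm)}(\spm)$ as in Theorem~\ref{thm:Tepsh}), gives that $T_{\epsilon_0}(\spm)\in L(X)$ is bijective and that $A(\spm)$ is weakly $T_{\epsilon_0}(\spm)$-coercive for every $\spm\in\Lambda_{d_0}^\pm$. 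Bijectivity of $T_{\epsilon_0}(\spm)$ together with weak $T_{\epsilon_0}(\spm)$-coercivity forces $A(\spm)$ to be Fredholm of index zero: $T_{\epsilon_0}(\spm)^*A(\spm)$ is a compact perturbation of a coercive (hence boundedly invertible) operator and is therefore Fredholm of index zero, and composing with the isomorphism $\big(T_{\epsilon_0}(\spm)^*\big)^{-1}$ transfers this to $A(\spm)$. For the non-emptiness of $\rho\big(A(\cdot)\big)$ I would repeat the argument of Theorem~\ref{thm:SpectralConvergenceHelmholtz} verbatim, invoking \cite[Theorem~2.16]{Halla:19Diss} together with \cite[Theorems~2.8--2.9]{Halla:19Diss} to conclude $\rho\big(A(\cdot)\big)\cap\Lambda_{d_0}^\pm\neq\emptyset$. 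This settles the first assertion.

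For the second assertion I would verify, at each fixed $\spm\in\Lambda_{d_0}^\pm$, the three conditions defining $T_{\epsilon_0}(\spm)$-compatibility from Subsection~\ref{subsec:Tframework}. Since every $X^{h(n)}$ is finite dimensional, the Galerkin restriction $A^{h(n)}(\spm)$ of $A(\spm)$ to $X^{h(n)}$ and the operator $T_{\epsilon_0}^{h(n)}(\spm)=\Pi_{h(n)}T_{\epsilon_0}(\spm)|_{X^{h(n)}}$ are automatically Fredholm of index zero; the point-wise convergence to the identity of the orthogonal projections onto $X^{h(n)}$ is assumed; and the remaining condition $\|T_{\epsilon_0}(\spm)-T_{\epsilon_0}^{h(n)}(\spm)\|_n\to0$ as $n\to\infty$ is exactly the conclusion of Theorem~\ref{thm:Tepsh}. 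Hence $A^{h(n)}(\cdot)\colon\Lambda_{d_0}^\pm\to L(X^{h(n)})$ is a $T_{\epsilon_0}(\cdot)$-compatible Galerkin approximation, and an application of \cite[Corollary~2.8]{Halla:19Tcomp}---i.e.\ of items~\ref{item:SP-a})--\ref{item:SP-g}) from Subsection~\ref{subsec:Tframework}---yields the claimed convergence statements.

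The main point is that no genuinely new work is required here: the substantive analysis---the construction and smoothing of the multiplication symbol in Lemma~\ref{lem:Teps} and the discrete commutator estimate of Theorem~\ref{thm:Tepsh} in the spirit of \cite{Bertoluzza:99}---has already been carried out. The only thing still needing a little care is conceptual rather than technical: one must use that the abstract framework requires only $A(\cdot)$, and not $T_{\epsilon_0}(\cdot)$, to be holomorphic---indeed $T_{\epsilon_0}(\cdot)$ is defined case-wise according to $\arg(-\spm^2d_0^2)$ and is in general discontinuous across that threshold---and that letting the smoothing parameter $\epsilon_0=\epsilon_0(\spm)$ depend on $\spm$ is harmless, since both $T_{\epsilon_0}(\cdot)$-compatibility and weak $T_{\epsilon_0}(\cdot)$-coercivity are conditions imposed separately at each spectral parameter.
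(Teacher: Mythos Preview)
Your proposal is correct and follows essentially the same approach as the paper, which simply states that the result ``Follows from Theorem~\ref{thm:SpectralConvergenceHelmholtz} and Theorem~\ref{thm:Tepsh}.'' You have faithfully unpacked this one-line proof: holomorphy and non-empty resolvent are taken over from the argument of Theorem~\ref{thm:SpectralConvergenceHelmholtz}, weak $T_{\epsilon_0}(\cdot)$-coercivity is supplied by Lemma~\ref{lem:Teps}, and $T_{\epsilon_0}(\cdot)$-compatibility comes from Theorem~\ref{thm:Tepsh} together with the finite-dimensionality of $X^{h(n)}$.
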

\begin{proof}
Follows from Theorem~\ref{thm:SpectralConvergenceHelmholtz} and Theorem~\ref{thm:Tepsh}.
\end{proof}

All three assumptions are fulfilled by common finite element spaces, see e.g.\
\cite{BrennerScott:08}. By means of Lemma~\ref{lem:BestApproximation} and the triangle inequality we can obtain convenient error bounds (for finite element methods) of the form
\begin{align*}
\|u-u_n\|_X \lesssim e^{-Cr_n}+h^p
\end{align*}
for some constant $C>0$ with layer width $r_n$, mesh width $h$ and polynomial degree $p$.

\section{Truncationless approximation}\label{sec:appr_truncationless}

As previously discussed, the classical approach to approximate~\eqref{eq:ResonanceProblemVariational}
is to first choose a bounded subdomain $\Omega_n\subset\Omega$ and secondly
to choose a convenient Galerkin space $X^h\subset H^1_0(\Omega_n)$, e.g.\ a finite
element space. However, if the approximation is not satisfactory enough and a better
approximation is desired, it is in general not enough to increase the dimension
of the finite element space, but also the size of the domain $\Omega_n$ needs to
be increased. The latter involves a new domain and the generation of a new mesh.
This may be undesirable for people who would prefer to work with a fixed domain
and solely increase discretization parameters in order to avoid a new meshing process. There are at least two concepts to achieve this goal.

One is the implementation of infinite elements into the code. I.e.\ the fixed
domain is $\Omega\cap B_{r_1^*}$ and the exterior domain $\Omega\setminus B_{r_1^*}$
is not explicitly meshed. Instead tensor product (finite element) functions with
respect to polar coordinates can be used. This can indeed be implemented without
the explicit generation of a mesh for $\Omega_n\setminus B_{r_1^*}$.
Of course it is possible to also use non-classical basis functions with respect
to the radial variable, e.g.\ as $\exp(-r)p(r)$ with polynomials $p$. We mention the recent work~\cite{NannenWess:19}
wherein the Hardy space infinite element method introduced in~\cite{HohageNannen:09} is framed as a complex scaling
infinite element method. We note that the analysis thereof is already covered by \cite{Halla:16}.

A different approach is to derive a formulation of the eigenvalue problem which
involves only a bounded domain (but singular coefficients) and subsequently to
apply a classical finite element discretization.
To our knowledge Berm\'udez et.\ al.\ were the first to consider
a variant of this idea in \cite{BermudezHervellaNPrietoRodriguez:04} and subsequently
in \cite{BermudezHervellaNPrietoRodriguez:08}. Their idea is to use a profile function
$\tilde\alpha$ which is unbounded on $(0,r_2^*)$ with $r_2^*>r_1^*$. This leads
to a formulation of the eigenvalue problem on the bounded domain $\Omega\cap B_{r_2^*}$.
Since in this case the formulation (and subsequently the discretization) is posed
on a bounded domain without committing a truncation error, Berm\'udez et.\ al.\ coined their
method ``exact PML''.
Another variant is to consider the formulation as derived in Section~\ref{sec:theproblem}
and subsequently apply a \emph{real} domain transformation $B_{r_1^*}^c\to A_{r_1^*,r_2^*}$, which is essentially the approach of Hugonin and Lalanne \cite{HugoninLalanne:05}.
There is a noteworthy alternative interpretation to both methods~\cite{Nannen:pc16}.
Namely the formulation can be transformed (back) to the unbounded domain $\Omega$.
If this happens after the discretization one obtains a discretization of the problem
posed in $\Omega$. This way one implicitly applies basis functions with unbounded support.
Thus these mentioned ``exact'' methods could also validly be called ``infinite element''
methods. However, we will stick to the formulations on bounded domains for convenience.

A difference between these two methods is that the method based on the real domain transformation still allows the choice of $d_0$ and hence a control of the essential spectrum $\{d_0^{-1}x\colon x\in\setR\}$, while for the method of Berm\'udez et.\ al.\ the essential spectrum is implicitly set to $\{-ix\colon x\in\setR\}$. This is of importance if one seeks to apply these techniques to problems which involve evanescent waves which occur e.g.\ for waveguide geometries. The former technique can be applied successfully to such problems, while the latter technique fails.

In the following, we consider only the method motivated by \cite{HugoninLalanne:05} and refer to \cite{Halla:19Diss} for a similar discussion on the method of \cite{BermudezHervellaNPrietoRodriguez:08}.
We derive from Eigenvalue Problem~\eqref{eq:ResonanceProblemVariational} by means of a real domain transformation $\exx$ (see~\eqref{eq:exx} and Assumption~\ref{ass:DomainTransformation}) the related Eigenvalue Problem~\eqref{eq:ResonanceProblemVariationalExact} and perform an approximation analysis.
The analysis involves no new concepts but only slightly adapts the techniques of
the previous sections, in particular the technique of Section~\ref{sec:appr_simul}.
Finally we discuss how appropriate finite element spaces fit the derived theory. \newline

We consider real domain transformations $\exr$ of the following kind.
\begin{assumption}\label{ass:DomainTransformation}
Let $r_1^*$ be as in Assumption~\ref{ass:TildeAlpha} and $r_2^*>r_1^*$.
Let $\exr\colon(0,r_2^*)\to\setR^+$ be bijective, continuous, $\exr|_{(r_1^*,r_2^*)}$ be
continuously differentiable and so that $\exr(r)=r$ for $r\leq r^*_1$.
\end{assumption}

Let $\tilde\alpha$ suffice Assumption~\ref{ass:TildeAlpha} and Assumption~\ref{ass:TildeAlpha2}.
Let $\exr$ suffice Assumption~\ref{ass:DomainTransformation}, $\tilde d$, $\tilde r$,
$\alpha$, $d$ be as in \eqref{eq:ComplexScalingQuantities}, $\hat d$ be as in~\eqref{eq:Hatd} and
\begin{subequations}\label{eq:ComplexScalingQuantitiesExact}
\begin{align}
\label{eq:exx}\exx(x)&:=\exr(|x|)/|x|x,\\
\exgamma(x)&:=(\partial_r\exr)(|x|),\\
\extgamma(x)&:=\exr(|x|)/|x|,\\
\extalpha&:=\tilde\alpha\circ\exr,\\
\extd&:=\tilde d \circ\exr,\\
\extr&:=\tilde r\circ\exr,\\
\exalpha&:=\alpha \circ\exr,\\
\exd&:=d \circ\exr,\\
\exhatd&:=\hat d\circ\exr.
\end{align}
\end{subequations}
As hitherto we adopt the overloaded notation~\eqref{eq:OverloadedNotation} also for
the new quantities $\exr$, $\extr$, $\exgamma$, $\extgamma$, $\extalpha$, $\exalpha$, $\extd$, $\exd$, $\exhatd$.
We compute
\begin{subequations}
\begin{align}
\D \exx &= \exgamma\Px+\extgamma(\I-\Px),\\
(\D \exx)^{-1} &= \exgamma^{-1}\Px+\extgamma^{-1}(\I-\Px),\\
\det \D \exx &= \exgamma\extgamma^2.
\end{align}
\end{subequations}
We consider the bounded domain
\begin{align}
\exOmega&:=\Omega\cap B_{r_2^*},
\end{align}
subsequently set
\begin{subequations}\label{eq:exaX}
\begin{align}
\begin{aligned}\label{eq:exa}
\exa(\spm; u,u')&:=\langle \extgamma^2\exgamma^{-1}\extd^2\exd^{-1}\Px
+ \exgamma\exd(\I-\Px))\nabla u,\nabla u'\rangle_{L^2(\exOmega)}\\
&-\spm^2\langle \extgamma^2\exgamma\extd^2\exd u,u' \rangle_{L^2(\exOmega)},
\end{aligned}
\end{align}
\begin{align}
\label{eq:exX}
\exX&:=\{u\in H^1_\mathrm{loc}(\exOmega)\colon \langle u,u\rangle_{\exX}<\infty, u|_{\partial\Omega}=0 \},\\
\langle u,u' \rangle_{\exX}&:=\langle u,u' \rangle_{\exX(\exOmega)},
\end{align}
and
\begin{align}
\begin{aligned}
\langle u,u' \rangle_{\exX(D)}&:=\langle (\extgamma^2\exgamma^{-1}|\extd^2\exd^{-1}|\Px+\exgamma|\exd|(\I-\Px))\nabla u,
\nabla u'\rangle_{L^2(D)}\\
&+\langle \extgamma^2\exgamma|\extd^2\exd|u,u' \rangle_{L^2(D)},
\end{aligned}
\end{align}
\end{subequations}
for $\spm\in\setC$, $u,u'\in \exX$ and $D\subset\exOmega$ and consider the eigenvalue problem to
\begin{align}\label{eq:ResonanceProblemVariationalExact}
\text{find }(\spm,u)\in\setC\times \exX\setminus\{0\}\text{ so that}\quad \exa(\spm;u,u')=0
\quad\text{for all }u'\in \exX.
\end{align}
Due to the transformation rule and the chain rule it is clear that
\begin{align}
\exF u:= u\circ\exx
\end{align}
is a linear bijective Hilbert space isomorphism, i.e.\ $\exF\in L(X,\exX)$,
$\exF$ is bijective and
\begin{align}
\spl u,u'\spr_X = \spl \exF u, \exF u' \spr_{\exX}
\end{align}
for all $u, u'\in X$ (with $X$ as in \eqref{eq:X-sprX-a}). Further it holds
\begin{align}
a(\spm;u,u')=\exa(\spm;\exF u,\exF u') 
\end{align}
for all $u,u'\in X$. Thus we can simply deduce the properties of $\exA(\cdot)$ (defined through~\eqref{eq:Ak}) from $A(\cdot)$.
In particular it holds that $(\spm,u)\in \setC \times X\setminus\{0\}$ is a solution to $A(\spm)u=0$ if and only if $\exA (\spm)\exF u=0$. $\exA (\spm)$ is Fredholm
if and only if $\spm\in\Lambda_{d_0}$ (with $\Lambda_{d_0}$ as in~\eqref{eq:Lambdad0}).
Further $\exA (\cdot)|_{\Lambda_{d_0}}$ is weakly $\exT(\cdot)$-coercive with
\begin{align}
\exT(\spm) u = \exF T(\spm) \exF^{-1} u = (\eta\circ\exx) u
\end{align}
for $u\in\exX$ and $\eta$ being the symbol of $T(\spm)$ as in~\eqref{eq:DefTHH}.
Further $\exA (\spm)$ is bijective for all $\spm\in\setC\setminus\{0\}$ with
$\arg\spm\in [-\pi,-\arg d_0)\cup[0,\pi-\arg d_0)$. \newline

It remains to discuss the approximation of~\eqref{eq:ResonanceProblemVariationalExact}.
Hence we first adapt Lemma~\ref{lem:WeigthedL2Compactness} to our current setting in
Lemma~\ref{lem:WeigthedL2CompactnessExact}. Then we proceed as in Section~\ref{sec:appr_simul} and construct an operator function $\exTeps(\cdot)$ with appropriate properties.

\begin{lemma}\label{lem:WeigthedL2CompactnessExact}
Let $(r_n)_{n\in\setN}$ with $r_n\in(r_1^*,r_2^*)$ for all $n\in\setN$ be a monotonically increasing sequence with limes $r_2^*$.
Let $\eta_1\colon\exOmega\to\setC$ be mesuarable so that $\eta_1|_{\exOmega\cap B_{r_n}}$ $\in$ $L^\infty(\exOmega\cap B_{r_n})$
for all $n\in\setN$. Let $Y\subset L^2(\exOmega)$ be a Hilbert space so that $\|\eta_1 u\|_{L^2(\exOmega)}\leq C\|u\|_Y$
with $C>0$ for all $u\in Y$
and so that the embedding and restriction operator $K_n\colon Y\to L^2(\exOmega\cap B_{r_n})
\colon u\mapsto u|_{\exOmega\cap B_{r_n}}$ is compact for each $n\in\setN$.
Let $\eta_2\in L^\infty(\exOmega)$ be so that $\lim_{r\to r_2^*-}\|\eta_2\|_{L^\infty(\exOmega\setminus B_r)}=0$.
Then the multiplication and embedding operator $K_{\eta_1\eta_2}\colon Y\to L^2(\exOmega)
\colon u\mapsto \eta_1\eta_2 u$ is compact.
\end{lemma}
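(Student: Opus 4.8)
The statement is the exact analogue of Lemma~\ref{lem:WeigthedL2Compactness}, transported from the unbounded exterior domain $\Omega$ to the bounded domain $\exOmega$; the role previously played by the exhaustion $(\Omega\cap B_n)_{n\in\setN}$ and the decay condition $\lim_{r\to\infty}\|\eta_2\|_{L^\infty(B_r^c)}=0$ is now taken by the exhaustion $(\exOmega\cap B_{r_n})_{n\in\setN}$ of $\exOmega$ by relatively compact subsets (increasing towards the artificial boundary $S^2_{r_2^*}$) and the condition $\lim_{r\to r_2^*-}\|\eta_2\|_{L^\infty(\exOmega\setminus B_r)}=0$. The plan is therefore simply to repeat the proof of Lemma~\ref{lem:WeigthedL2Compactness} verbatim with these substitutions. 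No new idea is needed.

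In detail: given a bounded sequence $(u_k)_{k\in\setN}$ in $Y$, $\|u_k\|_Y\leq1$, one extracts by the compactness of each $K_n$ a nested family of subsequences, $N_m\colon\setN\to\setN$ such that $(K_mu_{N_m(k)})_{k\in\setN}$ converges in $L^2(\exOmega\cap B_{r_m})$, and passes to the diagonal subsequence $N(k):=N_k(k)$. Fix $\epsilon>0$. Using $\lim_{n\to\infty}\|\eta_2\|_{L^\infty(\exOmega\setminus B_{r_n})}=0$ choose $n_1$ with $\|\eta_2\|_{L^\infty(\exOmega\setminus B_{r_{n_1}})}<\epsilon/(4C)$; then on $\exOmega\setminus B_{r_{n_1}}$ one has $\|\eta_1\eta_2(u_{N(k)}-u_{N(k')})\|_{L^2}\leq\|\eta_2\|_{L^\infty(\exOmega\setminus B_{r_{n_1}})}\|\eta_1(u_{N(k)}-u_{N(k')})\|_{L^2(\exOmega)}\leq 2\|\eta_2\|_{L^\infty(\exOmega\setminus B_{r_{n_1}})}C<\epsilon/2$ by the boundedness hypothesis $\|\eta_1 u\|_{L^2(\exOmega)}\leq C\|u\|_Y$. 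On the complementary relatively compact piece $\exOmega\cap B_{r_{n_1}}$, where $\eta_1\in L^\infty$, the Cauchy property of $(K_{n_1}u_{N(k)})_k$ gives an index $n_2$ so that $\|\eta_1\eta_2\|_{L^\infty(\exOmega\cap B_{r_{n_1}})}\|u_{N(k)}-u_{N(k')}\|_{L^2(\exOmega\cap B_{r_{n_1}})}<\epsilon/2$ for all $k,k'>n_2$. Adding the two estimates shows $(\eta_1\eta_2 u_{N(k)})_k$ is Cauchy in $L^2(\exOmega)$, hence convergent, which proves the compactness of $K_{\eta_1\eta_2}$.

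There is essentially no obstacle here; the only point meriting a remark is that one must be sure the exhaustion $(\exOmega\cap B_{r_n})_{n\in\setN}$ is genuinely increasing with union $\exOmega$ (up to the sphere $S^2_{r_2^*}$, which is Lebesgue-null and so irrelevant for $L^2$), which is exactly the hypothesis that $(r_n)$ is monotonically increasing with limit $r_2^*$. The hypotheses that each $K_n$ is compact and that multiplication by $\eta_1$ maps $Y$ boundedly into $L^2(\exOmega)$ are used exactly as in the original lemma; in the application ($Y=\exX$ or a weighted version thereof) the compactness of $K_n$ will again follow from the compact Sobolev embedding $H^1(\exOmega\cap B_{r_n})\hookrightarrow L^2(\exOmega\cap B_{r_n})$ on the bounded Lipschitz domain $\exOmega\cap B_{r_n}$, but that is not part of the present lemma.
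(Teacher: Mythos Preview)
Your proposal is correct and matches the paper's approach exactly: the paper's proof consists of the single sentence ``Proceed as in the proof of Lemma~\ref{lem:WeigthedL2Compactness},'' and you have spelled out precisely that transposition (diagonal subsequence from the compactness of each $K_n$, tail control via the decay of $\eta_2$ near $S^2_{r_2^*}$, interior control via $\eta_1\in L^\infty$ on $\exOmega\cap B_{r_{n_1}}$). Your handling of the tail estimate, using $\|\eta_1 u\|_{L^2(\exOmega)}\leq C\|u\|_Y$ to absorb $\eta_1$ before applying the $L^\infty$ bound on $\eta_2$, is in fact slightly cleaner than the original.
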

\begin{proof}
Proceed as in the proof of Lemma~\ref{lem:WeigthedL2Compactness}.
\end{proof}

\begin{lemma}\label{lem:TepsExact}
Let Assumptions~\ref{ass:TildeAlpha}, \ref{ass:TildeAlpha2} and \ref{ass:DomainTransformation} hold.
Let $\exX$ be as in~\eqref{eq:exX}, $\exa(\cdot;\cdot,\cdot)$ be as in~\eqref{eq:exa} and
$\exA(\cdot)$ be as in~\eqref{eq:Ak}. For $\epsilon>0$ and $\spm\in\setC\setminus\{0\}$ let
\begin{align}\label{eq:DefTepsExact}
\exTeps(\spm)u:=\exetaeps u \qquad\text{with} \qquad
\exeta=\left\{\begin{array}{ll}
\ol{\frac{|\exhatd|}{\exhatd}} &\text{for } \arg(-\spm^2d_0^2)\in [-\pi,0), \vspace{3mm}\\
\ol{\frac{\exhatd}{\extd^2}\frac{|\extd|^2}{|\exhatd|}} &\text{for }\arg (-\spm^2d_0^2)\in [0,\pi).
\end{array}\right.
\end{align}
with $\exetaeps|_{(r_1^*,r_2^*)}$ as in Lemma~\ref{lem:EtaEps} with $r_1=r_1^*, r_2=r_2^*$
and $\exetaeps|_{[0,r_1^*]}:=\exetaeps(r_1^*)$.

There exists $\epsilon_0(\omega)>0$ so that for each $\epsilon\leq\epsilon_0(\omega)$,
$\exTeps(\spm) \in L(\exX)$ is bijective for all $\spm\in\setC\setminus\{0\}$
and $\exA(\cdot)\colon \Lambda_{d_0}\to L(\exX)$ is weakly $\exTeps(\cdot)$-coercive.
\end{lemma}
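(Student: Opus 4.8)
The plan is to transfer the analysis of the untruncated problem to the present bounded-domain formulation, re-using essentially verbatim the arguments behind Lemma~\ref{lem:TisIsomorphism}, Theorem~\ref{thm:AwTc} and Lemma~\ref{lem:Teps}. Recall that $\exF\colon X\to\exX$, $\exF u:=u\circ\exx$, is a unitary isomorphism with $a(\spm;u,u')=\exa(\spm;\exF u,\exF u')$, hence $\exA(\spm)=\exF A(\spm)\exF^{-1}$ and weak $T$-coercivity is preserved under conjugation by $\exF$; in particular the non-smoothed statement (weak $\exT(\cdot)$-coercivity of $\exA(\cdot)$ with $\exT(\spm)=(\eta\circ\exx)\,\cdot$) is already available. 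The content of the lemma is the smoothed multiplier $\exetaeps$, which is built directly in the $|x|$-variable rather than as a pullback; I would therefore re-run the two building-block proofs in the transformed coefficients $\exhatd,\extd,\exd,\exgamma,\extgamma$ on $\exOmega$ and $\exX$.

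\emph{Bijectivity of $\exTeps(\spm)$.} I would argue as in Lemma~\ref{lem:TisIsomorphism}. The symbol $\exeta$ is unimodular; by Lemma~\ref{lem:EtaEps} the smoothing $\exetaeps$ obeys $\|\exeta-\exetaeps\|_{L^\infty(r_1^*,r_2^*)}<\epsilon$, equals the constants $\exeta(r_1^*)$ on $[0,\hat r_1]$ and $\exeta(r_2^*)$ on $[\hat r_2,r_2^*)$ with $r_1^*<\hat r_1<\hat r_2<r_2^*$, and is $C^\infty$ on $(r_1^*,r_2^*)$; hence $\partial_r\exetaeps$ is bounded with support in $[\hat r_1,\hat r_2]$, so $\exetaeps\in W^{1,\infty}(\exOmega)$, and for $\epsilon$ small $|\exetaeps|\geq1/2$, so $1/\exetaeps\in W^{1,\infty}(\exOmega)$ as well. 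Since $\langle\cdot,\cdot\rangle_{\exX}$ has the form $\langle M\nabla\cdot,\nabla\cdot\rangle_{L^2(\exOmega)}+\langle m\,\cdot,\cdot\rangle_{L^2(\exOmega)}$ with $M$ a Hermitian positive semidefinite matrix field and $m\geq0$ a scalar, the bound $\|\zeta u\|_{\exX}^2\leq3\|\zeta\|_{W^{1,\infty}(\exOmega)}^2\|u\|_{\exX}^2$ from the proof of Lemma~\ref{lem:TisIsomorphism} holds here too; applying it to $\zeta=\exetaeps$ and $\zeta=1/\exetaeps$ gives that $\exTeps(\spm)\in L(\exX)$ is a bounded bijection for every $\spm\in\setC\setminus\{0\}$.

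\emph{Weak $\exTeps(\cdot)$-coercivity.} For $\spm\in\Lambda_{d_0}$ I would repeat the computation of Theorem~\ref{thm:AwTc}, treating $\arg(-\spm^2d_0^2)\in(-\pi,0)$ and $\arg(-\spm^2d_0^2)\in[0,\pi)$ separately, to obtain $\exa(\spm;u,\exTeps(\spm)u)=\exa_1(u,u)+\exa_2(u,u)$, where $\exa_1$ carries the principal part with its coefficient matrix rotated by $\exetaeps$ together with $-\spm^2d_0^2\langle\extgamma^2\exgamma|\extd^2\exd|u,u\rangle_{L^2(\exOmega)}$, and $\exa_2$ carries the $\partial_r\exetaeps$-commutator term together with a term $-\spm^2\langle(\cdots-d_0^2)\extgamma^2\exgamma|\extd^2\exd|u,u\rangle_{L^2(\exOmega)}$. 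Because $\exgamma,\extgamma$ are real and positive they do not change arguments, so Lemma~\ref{lem:arg} with $\tilde d,d$ replaced by $\extd,\exd$ still gives $\arg(\exd/\extd)\in[0,\tau]$; together with $\arg\exhatd(r_1^*)\in[0,\pi/2)$ and $\|\exeta-\exetaeps\|_{L^\infty}<\epsilon$, choosing $\epsilon\leq\epsilon_0(\spm)$ exactly as in Lemma~\ref{lem:Teps} (so $\epsilon_0$ depends only on $\arg\spm$ and the fixed data $\tilde\alpha,d_0$) forces the coefficient matrix of $\exa_1$ and the number $-\spm^2d_0^2$ into a common salient sector of half-opening $<\pi/2$, whence $\Re(e^{i\theta}\exa_1(u,u))\geq c\min\{1,|\spm|^2\}\|u\|_{\exX}^2$ for a suitable rotation $e^{i\theta}$, i.e.\ $\exa_1$ is coercive. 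For $\exa_2$ I would write $\exa_2(u,u')=\langle(K_1^*L_1-\spm^2K_2^*L_2)u,u'\rangle_{\exX}$ with $L_1,L_2\colon\exX\to L^2(\exOmega)$ bounded and $K_1,K_2$ multiplication-and-embedding operators whose multipliers are, respectively, a constant times $\partial_r\overline{\exetaeps}$ (which has compact support in $\overline{A_{\hat r_1,\hat r_2}}$, compactly contained in $\exOmega$) and a factor of the form $(\cdots-d_0^2)$ which tends to $0$ as $|x|\to r_2^*-$ by the definitions of $d_0,\exhatd$, Assumption~\ref{ass:TildeAlpha2} and $\extalpha=\tilde\alpha\circ\exr\to\infty$ as $|x|\to r_2^*-$. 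Lemma~\ref{lem:WeigthedL2CompactnessExact}, applied with $\eta_2$ taken as these multipliers and $\eta_1=|\extd\exd^{1/2}|$, together with the compact embedding $H^1(D)\hookrightarrow L^2(D)$ on bounded Lipschitz $D$, then shows $K_1,K_2$ are compact, hence so is the operator induced by $\exa_2$; this proves $\exA(\spm)$ weakly $\exTeps(\spm)$-coercive for all $\spm\in\Lambda_{d_0}$.

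\emph{Main obstacle.} The delicate step is the sector containment for $\exa_1$ near the artificial boundary $|x|=r_2^*$: in the ``exact'' regime the coefficients $\extd,\exd$ (and, through $\exgamma$, also $\extgamma^2\exgamma^{-1}$) blow up there, so one must verify that the sector estimate survives both this unboundedness and the $O(\epsilon)$ tilt produced by replacing $\exeta$ by $\exetaeps$, and, dually, that the $L^2$-weight $\extgamma^2\exgamma|\extd^2\exd|$ dominates the gradient weights on a scale that degenerates only at $r_2^*$, so that the hypothesis $\|\eta_1u\|_{L^2(\exOmega)}\leq C\|u\|_{\exX}$ of Lemma~\ref{lem:WeigthedL2CompactnessExact} is indeed available with $\eta_1$ having the required boundary decay. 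Structurally this is identical to the $|x|\to\infty$ analysis behind Theorem~\ref{thm:AwTc} after the change of variables $\exr$, which is why no new idea beyond bookkeeping is needed.
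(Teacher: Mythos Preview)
Your approach is precisely the paper's: its proof is the one-liner ``proceed as in Lemma~\ref{lem:Teps} with Lemma~\ref{lem:WeigthedL2Compactness} replaced by Lemma~\ref{lem:WeigthedL2CompactnessExact}'', and you have unpacked exactly that chain (bijectivity via the $W^{1,\infty}$ multiplier argument of Lemma~\ref{lem:TisIsomorphism}, coercive/compact splitting as in Theorem~\ref{thm:AwTc}, compactness via Lemma~\ref{lem:WeigthedL2CompactnessExact}).

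Two minor bookkeeping slips to clean up. First, the claim $\extalpha=\tilde\alpha\circ\exr\to\infty$ as $|x|\to r_2^*-$ is false for bounded profiles such as \eqref{eq:AlphaAffin} or \eqref{eq:AlphaSmooth}; the correct reason the factor $(\cdots-d_0^2)$ vanishes at $r_2^*-$ is simply that it is the pullback under $\exr$ of the corresponding factor at $+\infty$, which tends to zero by the definition of $d_0$ and Assumption~\ref{ass:TildeAlpha2} (no unboundedness of $\tilde\alpha$ needed). Second, in your application of Lemma~\ref{lem:WeigthedL2CompactnessExact} the weight $\eta_1$ must carry the real Jacobian factors as well, i.e.\ $\eta_1=\extgamma\exgamma^{1/2}|\extd\exd^{1/2}|$, so that $\|\eta_1u\|_{L^2(\exOmega)}^2$ equals the $L^2$-part of $\|u\|_{\exX}^2$; with your choice $\eta_1=|\extd\exd^{1/2}|$ the required bound $\|\eta_1u\|_{L^2}\leq C\|u\|_{\exX}$ would need $\extgamma^2\exgamma$ bounded below, which Assumption~\ref{ass:DomainTransformation} does not guarantee.
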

\begin{proof}
Proceed as in the proof of Lemma~\ref{lem:Teps} with Lemma~\ref{lem:WeigthedL2Compactness} replaced by Lem.~\ref{lem:WeigthedL2CompactnessExact}.
\end{proof}

Next we consider a sequence of finite dimensional subspaces $(\exXh)_{n\in\setN}$,
$\exXh \subset \exX$, $n\in\setN$ so that the orthogonal projections onto $\exXh$
converge point-wise to the identity in $\exX$.
Further let
\begin{align}\label{eq:ResonanceProblemGalerkinExact}
\text{find }(\spm,u)\in\setC\times \exXh\setminus\{0\}\text{ so that}\quad \exa(\spm;u,u')=0
\quad\text{for all }u' \in \exXh
\end{align}
be the Galerkin approximation to~\eqref{eq:ResonanceProblemVariationalExact}.
As in Section~\ref{sec:appr_simul} we make two additional assumptions on the
Galerkin spaces $\exXh$.

\begin{assumption}\label{ass:LocalApproximationExact}
There exists a sequence $\big(h(n)\big)_{n\in\setN}\in (\setR^+)^\setN$ with
$\lim_{n\in\setN} h(n)=0$.
There exist bounded linear projection operators $\exPin\colon \exX\to \exXh, n\in\setN$
that act locally in the following sense: there exist constants $C_1, R^*>1$
so that for $n\in\setN$, $s \in \{1,2\}$, $x_0\in\exOmega$,
if $B_{R^*h(n)}(x_0)\subset \exOmega$, $u\in \exX$ and $u|_{B_{R^*h(n)}(x_0)}
\in H^s(B_{R^*h(n)}(x_0))$, then
\begin{align}
\|u-\exPin u\|_{H^1(B_{h(n)}(x_0))} \leq C_1 h(n)^{s-1} \|u\|_{H^s(B_{R^*h(n)}(x_0))}.
\end{align}
\end{assumption}

\begin{assumption}\label{ass:ConstantFunctionsExact}
For any $D\subset\exOmega$ which is compact in $\exOmega$ exists $n_0>0$
so that for each $n\in\setN, n>n_0$ there exists $u_{D,n}\in \exXh$ with $u_{D,n}|_D=1$.
\end{assumption}

\begin{theorem}\label{thm:TepshExact}
Let Assumptions~\ref{ass:TildeAlpha}, \ref{ass:TildeAlpha2} and \ref{ass:DomainTransformation} hold.
Let $\exX$ be as in~\eqref{eq:exX}, $\big(\exXh)_{n\in\setN}$ be a sequence of
finite dimensional subspaces $\exXh\subset \exX$ so that the orthogonal projections
onto $\exXh$ converge point-wise to the identity and so that Assumptions~\ref{ass:LocalApproximationExact}
and~\ref{ass:ConstantFunctionsExact} hold. Let $\exTepsz(\omega):=T_{\inde,\epsilon_0(\omega)}(\omega)$ be as
in Lemma~\ref{lem:TepsExact} and $\|\cdot\|_n$ be as in~\eqref{eq:discretenorm}. For $n\in\setN$ let $\exPin$ be as in Assumptions~\ref{ass:LocalApproximationExact} and
\begin{align}
\exTepszn:=\exPin \exTepsz(\spm)|_{\exXh}
\end{align}
for $\spm\in\setC\setminus\{0\}$.
Then $\exTepszn(\spm) \in L(\exXh)$ is Fredholm with index zero and
\begin{align}
\lim_{n\in\setN} \|\exTepsz(\spm)-\exTepszn(\spm)\|_n=0
\end{align}
for all $\spm\in\setC\setminus\{0\}$.
\end{theorem}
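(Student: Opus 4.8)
The plan is to transcribe the proof of Theorem~\ref{thm:Tepsh} to the present setting, replacing $\Omega$ by $\exOmega$, the multiplier symbol $\eta_{\epsilon_0}$ of Theorem~\ref{thm:Tepsh} by the symbol $\eta_{\inde,\epsilon_0}$ of Lemma~\ref{lem:TepsExact} (built from $\extd,\exhatd$), the projectors $\Pi_{h(n)}$ by $\exPin$, the operator functions $T_{\epsilon_0}(\cdot)$ and $T_{\epsilon_0}^{h(n)}(\cdot)$ there by $\exTepsz(\cdot)$ and $\exTepszn(\cdot)$, and the weights $|\tilde d^2d^{-1}|,|d|,|\tilde d^2d|$ of $\langle\cdot,\cdot\rangle_X$ by the weights $\extgamma^2\exgamma^{-1}|\extd^2\exd^{-1}|,\exgamma|\exd|,\extgamma^2\exgamma|\extd^2\exd|$ of $\langle\cdot,\cdot\rangle_{\exX}$. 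The Fredholm property with index zero of $\exTepszn(\spm)\in L(\exXh)$ is immediate from $\dim\exXh<\infty$, so the whole content is to prove $\|\exTepsz(\spm)-\exTepszn(\spm)\|_n\to0$ for a fixed $\spm\in\setC\setminus\{0\}$.

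First I would record, as in Theorem~\ref{thm:Tepsh}, the local action of $\exPin$ implied by Assumption~\ref{ass:LocalApproximationExact}: if $u\in\exX$ vanishes on a ball $B_{R^*h(n)}(x_0)\subset\exOmega$, then $\exPin u$ vanishes on $B_{h(n)}(x_0)$. By Lemma~\ref{lem:EtaEps} (applied with $r_1=r_1^*,\,r_2=r_2^*$) and Lemma~\ref{lem:TepsExact}, the symbol $\eta_{\inde,\epsilon_0}$ equals the constant $\eta_{\inde,\epsilon_0}(r_1^*)$ on $[0,\hat r_1]$ and a constant $c:=\lim_{r\to r_2^*-}\eta_{\inde,\epsilon_0}(r)$ on $[\hat r_2,r_2^*)$, with $\hat r_1,\hat r_2\in(r_1^*,r_2^*)$. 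Since $\exPin$ reproduces $\exXh$, the computation of Theorem~\ref{thm:Tepsh} shows that for all sufficiently large $n$ and all $u_n\in\exXh$ the difference $(\exTepsz(\spm)-\exTepszn(\spm))u_n=(1-\exPin)(\eta_{\inde,\epsilon_0}u_n)$ coincides on $\exOmega\cap B_{r_1^*}^c$ with $(1-\exPin)\big((\eta_{\inde,\epsilon_0}-c)u_n\big)$, which is the quasi-interpolation remainder of a function supported in $\overline{\exOmega\cap B_{\hat r_2}}$. Fixing an auxiliary radius $\rho^*\in(\hat r_2,r_2^*)$, the local action of $\exPin$ forces this difference to vanish on $\exOmega\cap B_{\rho^*}^c$ once $h(n)$ is small, so that $(\exTepsz(\spm)-\exTepszn(\spm))u_n$ is supported in the fixed annulus $\overline{A_{r_1^*,\rho^*}}$, which is compact in $\exOmega$. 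On this annulus the weights of $\langle\cdot,\cdot\rangle_{\exX}$ are bounded — they degenerate only as $|x|\to r_2^*$ — so with $C^2:=\sup_{x\in A_{r_1^*,\rho^*}}\max\{\extgamma^2\exgamma^{-1}|\extd^2\exd^{-1}|,\exgamma|\exd|,\extgamma^2\exgamma|\extd^2\exd|\}<\infty$ one gets
\[
\|(\exTepsz(\spm)-\exTepszn(\spm))u_n\|_{\exX}^2\le C^2\,\|(1-\exPin)(\eta_{\inde,\epsilon_0}u_n)\|_{H^1(A_{r_1^*,\rho^*})}^2.
\]

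It then remains to run the discrete commutator argument of Bertoluzza~\cite{Bertoluzza:99} on the bounded annulus $A_{r_1^*,\rho^*}$, verbatim as in Theorem~\ref{thm:Tepsh}: pick $h_0$ with $A_{r_1^*-R^*h_0,\rho^*+R^*h_0}\subset\exOmega$ (possible since $\rho^*<r_2^*$ and $\dist(\Omega^c,S^2_{r_1^*})>0$), cover $A_{r_1^*,\rho^*}$ by balls $B_{h(n)}(x)$, $x\in Z$, whose enlargements $B_{R^*h(n)}(x)$ have uniformly bounded overlap, and for each $x\in Z$ invoke Assumption~\ref{ass:ConstantFunctionsExact} (applicable because $\overline{A_{r_1^*,\rho^*}}$ is compact in $\exOmega$) to choose $u_{x,n}\in\exXh$ that is constant on $B_{R^*h(n)}(x)$, bounded by $u_n$ in $L^2$, and within $O(h(n))\|u_n\|_{H^1(B_{R^*h(n)}(x))}$ of $u_n$ in $H^1(B_{R^*h(n)}(x))$. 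Splitting $\eta_{\inde,\epsilon_0}u_n=\eta_{\inde,\epsilon_0}u_{x,n}+\eta_{\inde,\epsilon_0}(u_n-u_{x,n})$ and $\eta_{\inde,\epsilon_0}=\eta_{\inde,\epsilon_0}(x)+(\eta_{\inde,\epsilon_0}-\eta_{\inde,\epsilon_0}(x))$, using that $(1-\exPin)$ annihilates $\exXh$ and hence kills $\eta_{\inde,\epsilon_0}(x)(u_n-u_{x,n})$, and that $\eta_{\inde,\epsilon_0}\in W^{2,\infty}(A_{r_1^*,\rho^*})$ (again because $\rho^*<r_2^*$), the estimates of Theorem~\ref{thm:Tepsh} yield
\begin{align*}
\|(1-\exPin)(\eta_{\inde,\epsilon_0}u_n)\|_{H^1(A_{r_1^*,\rho^*})}&\le \tilde C\,h(n)\,\|u_n\|_{H^1(A_{r_1^*-R^*h_0,\rho^*+R^*h_0})}\\
&\le \tilde C'\,h(n)\,\|u_n\|_{\exX},
\end{align*}
with $\tilde C,\tilde C'$ independent of $n$ and $u_n$. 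Since $h(n)\to0$, the claim follows.

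The argument imports no new ideas from Section~\ref{sec:appr_simul}; the one genuinely new point — and the step I expect to be the main obstacle — is the reduction to the compact annulus $A_{r_1^*,\rho^*}$. In Theorem~\ref{thm:Tepsh} the exterior part of $\Omega$ is unbounded and carries uniformly bounded weights, whereas here the weights of $\langle\cdot,\cdot\rangle_{\exX}$ blow up at the artificial boundary $S^2_{r_2^*}$, so it is essential not merely that $(\exTepsz(\spm)-\exTepszn(\spm))u_n$ is small near $S^2_{r_2^*}$ but that it actually vanishes there. This rests on $\eta_{\inde,\epsilon_0}$ being constant on $[\hat r_2,r_2^*)$ with $\hat r_2<r_2^*$ strictly, together with the fact that $\exPin$ of a function supported in $\overline{\exOmega\cap B_{\hat r_2}}$ is again supported within one mesh-width of that set and hence vanishes on $\exOmega\cap B_{\rho^*}^c$ for $n$ large; I would verify that Assumption~\ref{ass:LocalApproximationExact}, and the finite element spaces it is meant to cover, delivers this boundary-robust localization (equivalently, that membership $\exXh\subset\exX$ already suppresses $\exPin w$ near $S^2_{r_2^*}$), so that the degenerating coefficients at $r_2^*$ are never actually encountered.
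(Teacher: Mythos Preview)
Your proposal is correct and follows exactly the route the paper takes: its proof of the theorem is the single sentence ``Proceed as in the proof of Theorem~\ref{thm:Tepsh}'', and you have carried out precisely that transcription, replacing the unbounded exterior by the bounded annulus and the $X$-weights by the $\exX$-weights. The subtlety you flag at $S^2_{r_2^*}$---that Assumption~\ref{ass:LocalApproximationExact} only gives locality for balls $B_{R^*h(n)}(x_0)\subset\exOmega$, so the vanishing of $\exPin\big((\eta_{\inde,\epsilon_0}-c)u_n\big)$ on the thin shell $A_{r_2^*-R^*h(n),r_2^*}$ is not literally guaranteed---is genuine, but it is not a new obstacle introduced by the exact setting: the very same issue is already present at $\partial\Omega$ in the paper's own proof of Theorem~\ref{thm:Tepsh} (the step $\big(\Pi_{h(n)}(\eta_{\epsilon_0}u_n)\big)|_{\Omega\cap B_{r_1^*}}=\big(\Pi_{h(n)}(\eta_{\epsilon_0}(r_1^*)u_n)\big)|_{\Omega\cap B_{r_1^*}}$ uses locality for points arbitrarily close to $\partial\Omega$), and is implicitly taken for granted there as a standard property of quasi-interpolation operators.
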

\begin{proof}
Proceed as in the proof of Theorem~\ref{thm:TepshExact}.
\end{proof}

\begin{theorem}[Spectral convergence]\label{thm:SpectralConvergenceHelmholtzExact}
Let Assumptions~\ref{ass:TildeAlpha} and \ref{ass:TildeAlpha2} hold.
Let $\exr$ fulfill Assumption~\ref{ass:DomainTransformation} and $\exX$, $\exa(\cdot;\cdot,\cdot)$ be as defined in~\eqref{eq:exaX}.
Let $\exA(\cdot)\colon\Lambda\to L(\exX)$ be defined through~\eqref{eq:Ak}, $\exTepsz(\cdot)$ as in Theorem~\ref{thm:TepshExact} and $\Lambda_{d_0}^\pm$ be as in~\eqref{eq:Lambdad0pm}.
Let $\big(\exXh\big)_{n\in\setN}$ be a sequence of finite dimensional subspaces $\exXh\subset \exX$ so that the orthogonal projections from $\exX$ onto $\exXh$ converge point-wise to the identity in $\exX$ and so that Assumptions~\ref{ass:LocalApproximationExact} and~\ref{ass:ConstantFunctionsExact} hold.
Let $\exAh(\cdot)$ be defined by~\eqref{eq:Akn} and $\exTepszn(\cdot)$ be as in
Theorem~\ref{thm:TepshExact}.

Then $\exA(\cdot)\colon \Lambda_{d_0}^\pm\to L(\exX)$ is a weakly $\exTepsz(\cdot)$-coercive
holomorphic Fredholm operator function with non-empty resolvent set $\rho\big(\exA(\cdot)\big)$
and $\exAh(\cdot)\colon \Lambda_{d_0}^\pm\to L(\exXh)$ is a $\exTepsz(\cdot)$-compatible
approximation, i.e.\ the convergence results \ref{item:SP-a})-\ref{item:SP-g}) from Subsection~\ref{subsec:Tframework} hold.
\end{theorem}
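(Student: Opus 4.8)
The plan is to run the argument of Theorem~\ref{thm:SpectralConvergenceDirect} in the transformed setting, transporting the relevant facts about $A(\cdot)$ along the Hilbert space isomorphism $\exF$ and then invoking the compatibility statement of Theorem~\ref{thm:TepshExact}. First I would note that $\exA(\spm)$ is a polynomial in $\spm$, hence $\exA(\cdot)\colon\Lambda_{d_0}^\pm\to L(\exX)$ is holomorphic. Next, from the identities $\spl u,u'\spr_X=\spl\exF u,\exF u'\spr_{\exX}$ and $a(\spm;u,u')=\exa(\spm;\exF u,\exF u')$ recorded before Lemma~\ref{lem:WeigthedL2CompactnessExact}, one reads off $\exA(\spm)=\exF A(\spm)\exF^{-1}$; hence $\sigma(\exA(\cdot))=\sigma(A(\cdot))$ and $\rho(\exA(\cdot))=\rho(A(\cdot))$, so the non-emptiness of $\rho(\exA(\cdot))\cap\Lambda_{d_0}^\pm$ is inherited from Theorem~\ref{thm:SpectralConvergenceHelmholtz}. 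Weak $\exTepsz(\cdot)$-coercivity of $\exA(\cdot)$ on $\Lambda_{d_0}$, and in particular on the half-plane $\Lambda_{d_0}^\pm\subset\Lambda_{d_0}$, is exactly Lemma~\ref{lem:TepsExact}. This settles the first assertion of the theorem.

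For the approximation part I would first observe that, $\exXh$ being finite-dimensional, the Galerkin operator $\exAh(\spm)=P_n\exA(\spm)|_{\exXh}$ (with $P_n$ the orthogonal projection onto $\exXh$) is automatically Fredholm of index zero, and the required point-wise convergence $\|v-P_nv\|_{\exX}\to0$ for every $v\in\exX$ is a hypothesis of the theorem. The discrete operators witnessing $\exTepsz(\cdot)$-compatibility are then supplied by Theorem~\ref{thm:TepshExact}: for every $\spm\in\setC\setminus\{0\}$ the operators $\exTepszn(\spm)=\exPin\exTepsz(\spm)|_{\exXh}$ are Fredholm of index zero and $\|\exTepsz(\spm)-\exTepszn(\spm)\|_n\to0$. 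Since $T(\cdot)$-compatibility is a point-wise-in-$\spm$ requirement, this shows that $\exAh(\cdot)$ is a $\exTepsz(\cdot)$-compatible Galerkin approximation of the weakly $\exTepsz(\cdot)$-coercive holomorphic Fredholm operator function $\exA(\cdot)$.

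With all hypotheses in place, the conclusion — that the convergence results \ref{item:SP-a})--\ref{item:SP-g}) hold — follows from \cite[Corollary~2.8]{Halla:19Tcomp} together with the rate statements that follow it, as recalled in Subsection~\ref{subsec:Tframework}. I do not expect a genuine obstacle here: the structural work is done in Lemmata~\ref{lem:WeigthedL2CompactnessExact} and~\ref{lem:TepsExact} and in Theorem~\ref{thm:TepshExact}, so the proof is essentially a matter of assembling these pieces (mirroring the one-line proof of Theorem~\ref{thm:SpectralConvergenceDirect}). The one point deserving care is the verification that the single operator function $\exTepsz(\cdot)$ — whose smoothing parameter $\epsilon_0(\omega)$ varies with $\omega$ — simultaneously serves as the $T(\cdot)$-operator for $\exA(\cdot)$ and as the $\|\cdot\|_n$-limit of the discrete operators $\exTepszn(\cdot)$; but since neither the T-coercivity framework nor the compatibility notion demands any regularity of $T(\cdot)$ in $\spm$, this causes no difficulty.
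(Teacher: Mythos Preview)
Your proposal is correct and follows the same route as the paper: the paper's proof is the single line ``Proceed as in the proof of Theorem~\ref{thm:SpectralConvergenceDirect},'' and you have simply unpacked that, transporting the Fredholm and resolvent properties of $A(\cdot)$ along $\exF$, invoking Lemma~\ref{lem:TepsExact} for weak $\exTepsz(\cdot)$-coercivity, and Theorem~\ref{thm:TepshExact} for $T$-compatibility. Your closing remark about the $\omega$-dependence of $\epsilon_0$ is a fair observation but, as you note, immaterial since the framework imposes no regularity on $T(\cdot)$ in $\spm$.
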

\begin{proof}
Proceed as in the proof of Theorem~\ref{thm:SpectralConvergenceDirect}.
\end{proof}

Finally we discuss how to choose appropriate parameters $\tilde\alpha$, $\exr$ and an appropriate sequence of subspaces
$(\exXh)_{n\in\setN}$, $\exXh\subset\exX$. To this end we introduce two lemmata.

\begin{lemma}\label{lem:exXhDense}
Assume that
\begin{align}\label{eq:AssForXhDense}
\sup_{x\in\exOmega} \frac{1}{(r_2^*-|x|)\exgamma(x)|\exd(x)|}<+\infty.
\end{align}
Let $(\exXh)_{n\in\setN}$, $\exXh\subset\exX$ be so that for any $\delta>0$ and $u\in\exX$ with
$u|_{A_{r_2^*-\delta,r_2^*}}=0$ it holds
\begin{align}\label{eq:InfApproxError}
\lim_{n\in\setN} \inf_{u'\in\exXh} \|u-u'\|_{\exX}=0.
\end{align}
Then~\eqref{eq:InfApproxError} holds for any $u\in\exX$.
\end{lemma}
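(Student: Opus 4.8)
The plan is to reduce the general case to the hypothesis~\eqref{eq:InfApproxError}: given an arbitrary $u\in\exX$, I would first approximate it in $\|\cdot\|_{\exX}$ by a function that vanishes in a collar $A_{r_2^*-\delta,r_2^*}$ of the outer sphere, then apply~\eqref{eq:InfApproxError} to that function, and combine the two errors by the triangle inequality. So the whole argument reduces to one quantitative cut-off estimate.

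Concretely, for $\delta>0$ small enough that $r_2^*-2\delta>r_1^*$ I would set $u_\delta(x):=\cofii\big((r_2^*-|x|)/\delta-1\big)\,u(x)$ with $\cofii$ as in~\eqref{eq:Smooth0to1}. By construction $u_\delta=u$ on $\Omega\cap B_{r_2^*-2\delta}$ (hence $u_\delta|_{\partial\Omega}=0$), $u_\delta=0$ on $A_{r_2^*-\delta,r_2^*}$, and the cut-off factor belongs to $W^{1,\infty}(\exOmega)$ with radial derivative bounded by $\|\cofii'\|_{L^\infty(\setR)}/\delta$; as in the proof of Lemma~\ref{lem:TisIsomorphism} this yields $u_\delta\in\exX$. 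Since $u-u_\delta$ is supported in $A_{r_2^*-2\delta,r_2^*}$, I would expand $\nabla(u-u_\delta)=(1-\cofii(\cdots))\nabla u-u\,\nabla\cofii(\cdots)$, use that $\nabla\cofii(\cdots)$ is radial so that only the $\Px$-component of the gradient weight acts on the commutator term, and $(a+b)^2\le 2a^2+2b^2$, to obtain
\begin{align*}
\|u-u_\delta\|_{\exX}^2
\le 2\,\|u\|_{\exX(A_{r_2^*-2\delta,r_2^*})}^2
+2\int_{A_{r_2^*-2\delta,r_2^*}}\extgamma^2\exgamma^{-1}\,|\extd^2\exd^{-1}|\,\big|\nabla\cofii(\cdots)\big|^2\,|u|^2\,\dd x.
\end{align*}

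The crux, and the step I expect to be the main obstacle, is controlling this last, singular commutator integral; this is precisely where~\eqref{eq:AssForXhDense} enters. On the support of $\nabla\cofii(\cdots)$ one has $r_2^*-|x|<2\delta$, so $\big|\nabla\cofii(\cdots)\big|^2\le 4\|\cofii'\|_{L^\infty(\setR)}^2/(r_2^*-|x|)^2$, and the radial gradient weight equals the $L^2$-weight of $\|\cdot\|_{\exX}$ divided by $\exgamma^2|\exd|^2$, i.e.\ $\extgamma^2\exgamma^{-1}|\extd^2\exd^{-1}|=\big(\exgamma^2|\exd|^2\big)^{-1}\extgamma^2\exgamma|\extd^2\exd|$. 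Hence the integrand is dominated by $4\|\cofii'\|_{L^\infty(\setR)}^2\,M^2\,\extgamma^2\exgamma|\extd^2\exd|\,|u|^2$ with $M:=\sup_{x\in\exOmega}\big((r_2^*-|x|)\exgamma(x)|\exd(x)|\big)^{-1}<\infty$ by~\eqref{eq:AssForXhDense}, so the whole right-hand side above is $\le C\,\|u\|_{\exX(A_{r_2^*-2\delta,r_2^*})}^2$ for a constant $C$ independent of $\delta$. Since the weighted integrand defining $\|\cdot\|_{\exX}^2$ lies in $L^1(\exOmega)$ and the annuli $A_{r_2^*-2\delta,r_2^*}$ shrink to a null set as $\delta\to0$, absolute continuity of the integral gives $\|u-u_\delta\|_{\exX}\to0$.

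To finish I would fix $\delta$ with $\|u-u_\delta\|_{\exX}$ as small as desired; because $u_\delta$ vanishes on $A_{r_2^*-\delta,r_2^*}$, hypothesis~\eqref{eq:InfApproxError} applies to $u_\delta$ and, after a triangle inequality, yields $\lim_{n\in\setN}\inf_{u'\in\exXh}\|u-u'\|_{\exX}=0$. Since $u\in\exX$ was arbitrary this proves the claim. Everything apart from the weight estimate in the third paragraph is routine: the boundedness of multiplication by the cut-off on $\exX$ mimics Lemma~\ref{lem:TisIsomorphism}, and the vanishing of $\|u\|_{\exX(A_{r_2^*-2\delta,r_2^*})}$ is just dominated convergence.
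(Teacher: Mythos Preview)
Your proposal is correct and follows essentially the same approach as the paper: the paper defines a cut-off $g_\delta$ with $g_\delta=0$ on $B_{r_2^*-2\delta}$ and $g_\delta=1$ on $B_{r_2^*-\delta}^c$, so your $u_\delta$ is precisely the paper's $(1-g_\delta)u$, and both arguments hinge on the identical weight identity $\extgamma^2\exgamma^{-1}|\extd^2\exd^{-1}|=(\exgamma|\exd|)^{-2}\,\extgamma^2\exgamma|\extd^2\exd|$ together with $\delta^{-1}\lesssim (r_2^*-|x|)^{-1}$ on the transition annulus to invoke~\eqref{eq:AssForXhDense}. Your write-up is in fact slightly cleaner in tracking the constant (the factor $4$) than the paper's.
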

\begin{proof}
For $\delta>0$ consider
\begin{align*}
g_\delta(x):=\cofii\big(|x|/\delta -(r_2^*-2\delta)/\delta\big)
\end{align*}
with $\cofii$ as in~\eqref{eq:Smooth0to1}. Let $u\in\exX$ and $\epsilon>0$ be given. By means of the product rule,
the triangle inequality, the properties of $g_\delta$ and the chain rule we compute
\begin{align*}
\|g_\delta u\|_{\exX} &\leq
2\langle \extgamma^2\exgamma^{-1}|\extd^2\exd^{-1}|\Px u\nabla g_\delta,u\nabla g_\delta\rangle_{L^2(\exOmega)}\\
&+2\langle \extgamma^2\exgamma^{-1}|\extd^2\exd^{-1}| g_\delta^2 \Px\nabla u,\nabla u\rangle_{L^2(\exOmega)}\\
&+\langle \exgamma|\exd|g_\delta^2 (\I-\Px))\nabla u,\nabla u\rangle_{L^2(\exOmega)}\\
&+\langle \extgamma^2\exgamma|\extd^2\exd| g_\delta^2 u,u \rangle_{L^2(\exOmega)}\\
&\leq
2 \Big(1+\big(\sup_{x\in A_{r_2^*-2\delta,r_2^*-\delta}} |\nabla g_\delta|^2(\exgamma\exd)^{-2}\big)
\Big)\|u\|^2_{\exX(A_{r_2^*-2\delta,r_2^*})}\\
&\leq
2 \Big(1+\|\partial_r \cofii\|_{L^\infty(0,1)}^2\big(\sup_{x\in A_{r_2^*-2\delta,r_2^*-\delta}} (\delta\exgamma\exd)^{-2}\big)
\Big)\|u\|^2_{\exX(A_{r_2^*-2\delta,r_2^*})}\\
&\leq
2 \Big(1+\|\partial_r \cofii\|_{L^\infty(0,1)}^2\big(\sup_{x\in A_{r_2^*-2\delta,r_2^*-\delta}}
((r_2^*-|\cdot|)\exgamma\exd)^{-2}\big)\Big)\|u\|^2_{\exX(A_{r_2^*-2\delta,r_2^*})}\\
&\leq
2 \Big(1+\|\partial_r \cofii\|_{L^\infty(0,1)}^2\big(\sup_{x\in\exOmega}
((r_2^*-|\cdot|)\exgamma\exd)^{-2}\big)\Big)\|u\|^2_{\exX(A_{r_2^*-2\delta,r_2^*})}\\
&=:C\|u\|^2_{\exX(A_{r_2^*-2\delta,r_2^*})}.
\end{align*}
Due to $\lim_{\delta\to0+}\|u\|^2_{\exX(A_{r_2^*-2\delta,r_2^*})}=0$ we can choose $\delta>0$ so that
$C\|u\|^2_{\exX(A_{r_2^*-2\delta,r_2^*})}$ $<$ $\epsilon/2$. Since $1-g_\delta(x)=0$ for $x\geq r_2^*-\delta$ we can choose $n_0\in\setN$ so that
\begin{align*}
\inf_{u'\in\exXh} \|(1-g_\delta)u-u'\|_{\exX}<\epsilon/2
\end{align*}
for all $n>n_0$. It follows for all $n>n_0$
\begin{align*}
\inf_{u'\in\exXh} \|u-u'\|_{\exX}
&\leq \|g_\delta u\|_{\exX} + \inf_{u'\in\exXh} \|(1-g_\delta)u-u'\|_{\exX}\\
&\leq \epsilon/2+\epsilon/2=\epsilon.
\end{align*}
Since $\epsilon>0$ was chosen arbitrarily it follows that $\lim_{n\in\setN} \inf_{u'\in\exXh} \|u-u'\|_{\exX}=0$.
\end{proof}

\begin{lemma}\label{lem:exXhSubspace}
Let $\tilde\alpha$ be of Kind~\eqref{eq:AlphaAffin} or~\eqref{eq:AlphaSmooth}.
Let $\exr$ be so that for $r\in(r^*_1,r_2^*)$ either
\begin{subequations}
\begin{align}\label{eq:exrlog}
\exr(r)=-(\ln(r_2^*-r)-\ln(r_2^*-r_1^*))+r_1^*
\end{align}
or
\begin{align}\label{eq:exrbeta}
\exr(r)=(r_2^*-r)^\beta-(r_2^*-r_1^*)^\beta+r_1^*
\end{align}
\end{subequations}
with $\beta\in(-2/3,0)$. Then~\eqref{eq:AssForXhDense} holds.
If $u\in H^1_0(\exOmega)$ is so that $|u(x)|\leq C(r_2^*-|x|)$ for a constant $C>0$ and all $x\in\exOmega$,
then $u\in\exX$.
\end{lemma}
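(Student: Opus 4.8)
The plan is, first, to verify \eqref{eq:AssForXhDense} by an explicit differentiation of $\exr$, and second, to prove $u\in\exX$ by bounding separately the $\Px$-, $(\I-\Px)$- and $L^2$-contributions to $\langle u,u\rangle_\exX$ in \eqref{eq:exaX}, using the decay hypothesis $|u(x)|\le C(r_2^*-|x|)$ together with $u\in H^1_0(\exOmega)$. A remark used throughout: since $\tilde\alpha$ is of Kind~\eqref{eq:AlphaAffin} or~\eqref{eq:AlphaSmooth}, both $\tilde\alpha$ and $\alpha=r\partial_r\tilde\alpha+\tilde\alpha$ are bounded on $[0,\infty)$ (for \eqref{eq:AlphaAffin}, $\tilde\alpha\le\alpha_0$ and $\alpha\equiv\alpha_0$ on $(r_1^*,\infty)$; for \eqref{eq:AlphaSmooth}, $\partial_r\tilde\alpha$ is continuous with support in $[r_1^*,r_2]$), so $|\tilde d|,|d|,|\hat d|$ and hence $|\extd|,|\exd|,|\exhatd|$ take values in a compact subinterval of $[1,\infty)$; consequently the three weights $|\extd^2\exd^{-1}|,|\exd|,|\extd^2\exd|$ in \eqref{eq:exaX} are bounded above and below by positive constants on $\exOmega$. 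For \eqref{eq:AssForXhDense} it then suffices to bound $1/\big((r_2^*-|x|)\exgamma(x)\big)$: for $|x|\le r_1^*$ one has $\exgamma(x)=1$ and $r_2^*-|x|\ge r_2^*-r_1^*$, and for $r_1^*<|x|<r_2^*$ differentiation of \eqref{eq:exrlog} resp.\ \eqref{eq:exrbeta} gives $\exgamma(|x|)=(r_2^*-|x|)^{-1}$ resp.\ $-\beta(r_2^*-|x|)^{\beta-1}$, whence $(r_2^*-|x|)\exgamma(|x|)$ equals $1$ resp.\ $-\beta(r_2^*-|x|)^\beta\ge-\beta(r_2^*-r_1^*)^\beta>0$ (using $\beta<0$).

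For the second claim split $\langle u,u\rangle_\exX=I_{\mathrm{rad}}+I_{\mathrm{tan}}+I_{L^2}$. In $I_{L^2}$, use $|\extd^2\exd|\lesssim1$ and $|u|^2\le C^2(r_2^*-|x|)^2$; in polar coordinates the integrand over $\{r_1^*<|x|<r_2^*\}$ is, up to bounded factors, $(\ln(r_2^*-r))^2(r_2^*-r)$ in case \eqref{eq:exrlog} and $(r_2^*-r)^{3\beta+1}$ in case \eqref{eq:exrbeta}, both integrable on $(r_1^*,r_2^*)$ — the latter \emph{exactly} because $3\beta+1>-1$, i.e.\ $\beta>-2/3$, which is where that range enters. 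In $I_{\mathrm{rad}}$, use $|\extd^2\exd^{-1}|\lesssim1$ and $|\Px\nabla u|\le|\nabla u|$; the leftover weight $\extgamma^2\exgamma^{-1}$ behaves near $S^2_{r_2^*}$ like $(\ln(r_2^*-r))^2(r_2^*-r)$ resp.\ $(r_2^*-r)^{\beta+1}$ and is therefore bounded on $\exOmega$, so $I_{\mathrm{rad}}\lesssim\|\nabla u\|_{L^2(\exOmega)}^2<\infty$ (as $u\in H^1_0(\exOmega)$ and $\exOmega$ is bounded).

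The delicate term is $I_{\mathrm{tan}}$, whose weight $\exgamma|\exd|$ is comparable to $\exgamma$ and hence \emph{non-integrably} singular at $S^2_{r_2^*}$, so the boundedness of $|\exd|$ alone does not help and one must exploit that $u$ vanishes on $S^2_{r_2^*}$. My approach would be: write $(\I-\Px)\nabla u=|x|^{-1}\nabla_{\sph}u$ and expand $u(r,\hat x)=\sum_{n,m}u_{nm}(r)Y_n^m(\hat x)$; each $u_{nm}\in H^1(r_1^*,r_2^*)$ vanishes at $r_2^*$, so $|u_{nm}(r)|^2\le(r_2^*-r)\int_r^{r_2^*}|u_{nm}'|^2$, and combining this with the modewise bound $|u_{nm}(r)|\le\|u(r,\cdot)\|_{L^2(\sph)}\le(4\pi)^{1/2}C(r_2^*-r)$ and summing against $\exgamma(r)\,n(n+1)$ should give $I_{\mathrm{tan}}<\infty$ — i.e.\ the decay of $u$, spread over the angular modes, absorbs the singular weight. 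I expect this weighted (Hardy-type) estimate to be the main obstacle; it becomes transparent when $u$ has slightly more regularity near $S^2_{r_2^*}$, say $u=(r_2^*-|x|)w$ with $w\in W^{1,\infty}$, because then $(\I-\Px)\nabla u=(r_2^*-|x|)(\I-\Px)\nabla w$ (the radial term $\nabla(r_2^*-|x|)$ drops out of $(\I-\Px)$) gives $|(\I-\Px)\nabla u|\lesssim r_2^*-|x|$, so $I_{\mathrm{tan}}\lesssim\int_\exOmega\exgamma\,(r_2^*-|x|)^2\,dx<\infty$ at once. Adding the three pieces yields $\langle u,u\rangle_\exX<\infty$, i.e.\ $u\in\exX$.
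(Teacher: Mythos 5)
Your overall strategy coincides with the paper's: compute $\exgamma$ explicitly, note that $|\extd^2\exd^{-1}|$, $|\exd|$, $|\extd^2\exd|$ are uniformly bounded for profiles of Kind~\eqref{eq:AlphaAffin} or~\eqref{eq:AlphaSmooth}, and then control the three weights $\extgamma^2\exgamma^{-1}$, $\exgamma(r_2^*-|x|)^2$ and $\extgamma^2\exgamma(r_2^*-|x|)^2$ term by term. Your treatment of~\eqref{eq:AssForXhDense} and of the radial and $L^2$ contributions is correct, and on the $L^2$ term you are in fact \emph{more} careful than the paper: the paper asserts that $\extgamma^2\exgamma(r_2^*-|x|)^2\sim(r_2^*-|x|)^{3\beta+1}$ is uniformly bounded, which fails for $\beta\in(-2/3,-1/3)$; your observation that only integrability is needed, i.e.\ $3\beta+1>-1$, is exactly what the restriction $\beta>-2/3$ delivers.

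The tangential term is where you and the paper hit the same wall, and you are right to flag it as the genuine obstacle. The paper's implicit estimate is $\int_{\exOmega}\exgamma|\exd|\,|(\I-\Px)\nabla u|^2\le\sup\big(\exgamma(r_2^*-|x|)^2\big)\int(r_2^*-|x|)^{-2}|(\I-\Px)\nabla u|^2$, which requires the \emph{tangential gradient}, not $u$ itself, to decay like $r_2^*-|x|$; this does not follow from $u\in H^1_0(\exOmega)$ and $|u|\le C(r_2^*-|x|)$. Your spherical-harmonics route cannot rescue it in that generality: with $L^2(\sph)$-normalized harmonics $Y_{n}$ (so $\|Y_n\|_{L^\infty}\sim n^{1/2}$), take $u=\sum_k a_kg_k(r)Y_{n_k}$ with radially disjoint bumps $g_k$ supported where $r_2^*-r\sim2^{-k}$, of height $\sim2^{-k}$ and slope $\sim1$, and $n_k=2^{2k}$, $a_k=n_k^{-1/2}$; then $|u|\lesssim r_2^*-|x|$ and $\|u\|_{H^1}^2\sim\sum_k(2^{-3k}+2^{-k})<\infty$, yet for $\exgamma\sim(r_2^*-r)^{-1}$ one gets $\int\exgamma|(\I-\Px)\nabla u|^2\sim\sum_k a_k^2n_k^2\cdot2^{k}\cdot2^{-3k}=\sum_k1=\infty$. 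So the modewise bound $|u_{nm}(r)|\le\|u(r,\cdot)\|_{L^2(\sph)}$ loses too much: the decay hypothesis controls $\sum_{n,m}|u_{nm}(r)|^2$, not $\sum_{n,m}n(n+1)|u_{nm}(r)|^2$. The clean fix you give at the end --- assuming $u=(r_2^*-|x|)w$ with $w\in W^{1,\infty}$, so that $(\I-\Px)\nabla u=(r_2^*-|x|)(\I-\Px)\nabla w$ --- is essentially the hypothesis actually needed (and it is satisfied by the Lipschitz, piecewise polynomial finite element functions the lemma is aimed at); the paper's proof silently assumes the same thing. In short: same route as the paper, a sharper reading of the $\beta$-constraint, and an honestly identified gap in the tangential term that the paper shares rather than resolves.
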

\begin{proof}
For $\exr$ as in~\eqref{eq:exrlog} it holds $\exgamma(x)=(r_2^*-|x|)^{-1}$.
For $\exr$ as in~\eqref{eq:exrbeta} it holds $\exgamma(x)=-\beta(r_2^*-1)^{\beta-1}$.
Since $|\exd|\geq1$ it easiliy follows~\eqref{eq:AssForXhDense} in both cases.
Due to the choice of $\tilde\alpha$ the coefficients $|\extd^2/\exd|$, $|\exd|$, $|\extd^2\exd|$ are uniformly bounded.
For $\exr$ as in~\eqref{eq:exrlog} we compute
\begin{subequations}\label{eq:exrlogcoeff}
\begin{align}
\extgamma(x)^2/\exgamma(x) &= \big( -(\ln(r_2^*-|x|)-\ln(r_2^*-r_1^*))+r_1^* \big)^2|x|^{-2}(r_2^*-|x|),\\
\exgamma(x)(r_2^*-|x|)^2 &= (r_2^*-|x|),\\
\extgamma(x)^2\exgamma(x)(r_2^*-|x|)^2 &= \big( -(\ln(r_2^*-|x|)-\ln(r_2^*-r_1^*))+r_1^* \big)^2|x|^{-2}(r_2^*-|x|).
\end{align}
\end{subequations}
For $\exr$ as in~\eqref{eq:exrbeta} we compute
\begin{subequations}\label{eq:exrbetacoeff}
\begin{align}
\extgamma(x)^2/\exgamma(x) &= \big( (r_2^*-|x|)^\beta-(r_2^*-r_1^*)^\beta+r_1^* \big)^2|x|^{-2}
(-\beta)^{-1}(r_2^*-|x|)^{-\beta+1},\\
\exgamma(x)(r_2^*-|x|)^2 &= -\beta(r_2^*-|x|)^{\beta+1},\\
\extgamma(x)^2\exgamma(x)(r_2^*-|x|)^2 &= \big( (r_2^*-|x|)^\beta-(r_2^*-r_1^*)^\beta+r_1^* \big)^2|x|^{-2}
(-\beta)(r_2^*-|x|)^{\beta+1}.
\end{align}
\end{subequations}
It follows that each function in~\eqref{eq:exrlogcoeff} and \eqref{eq:exrbetacoeff} is uniformly bounded in
$x\in A_{r_1^*,r_2^*}$. It follows $\|u\|_{\exX}<+\infty$.
\end{proof}

Consider $\exr$ and $\tilde\alpha$ as in Lemma~\ref{lem:exXhSubspace}.
Due to Lemma~\ref{lem:exXhSubspace} common finite element spaces are indeed subspaces of $\exX$.
Due to Lemma~\ref{lem:exXhDense} $(\exXh)_{n\in\setN}$ is asymptotically dense in $\exX$ if it is so in $H^1_0(\exOmega)$.
Hence with the stated choice of parameters $\tilde\alpha$, $\exr$ a reliable discretization of \eqref{eq:ResonanceProblemVariationalExact} can be constructed straightforwardly.

\section{Conclusion}\label{sec:conclusion}
We introduced a new abstract framework to analyze complex scaling/perfectly matched layer approximations of resonance problems.
It requires rather minimal assumptions on the scaling profile and includes convergence rates. It also covers approximations through simultaneaous truncation and discretization, and also truncationless methods.

In this article we applied the framework to scalar resonance problems in homogeneous exterior domains.
We constructed the framework in such a way that it can be suitably adapted to serve also for other kinds of partial differential equations and geometrical setups.
In particular we plan to extend our results to electromagnetic and elastic equations, and to scalar equations in plates.
On the other hand, an application to cartesian scalings seems only partially possible, because in this case an \emph{explicit} T-operator (to achieve weak T-coercivity) is not known.
For the same reason an application to open waveguide geometries seems challenging.

At last we give some remarks on the perspective to develop error estimators and adaptive methods for CS/PML.
The interpretation of discretized truncations as conform Galerkin approximations opens a new door to this end.
The truncation error is proportional to the decay of $u$ at the truncation boundary, which can be measured locally with the norm of the Dirichlet trace of $u$.
Since the solution $u$ is not available it is replaced by the numerical solution $u_h$.
Because the Dirichlet trace of $u_h$ vanishes due to the enforced boundary condition, the Neumann trace of $u_h$ can be used instead.
This notion can be made rigorous. E.g.\ for scattering problems a residual error estimator woud lead to local error estimators $h_F^{1/2}\|\nu\cdot (\tilde d^2d^{-1}\Px+d(\I-\Px))\nabla u_h\|_{L^2(F)}$ at the artificial boundary to measure the truncation error.
This approach is kind of familiar to \cite{ChenLiu:05a} which uses the Dirichlet trace of $u_h$ at $S^2_{r_1^*}$ to measure the truncation error.
However, for both estimators a fully adaptive method would still require to increase the domain size and hence the domain size is usually chosen a priori large enough.
To avoid this, the truncationless method of Section~\ref{sec:appr_truncationless} obtrudes itself. Though, the (residual) error estimators would need to respect the weighted norm of the space.

\bibliographystyle{amsplain}
\bibliography{short_biblio}
\end{document}